\newtheorem{theorem}{\noindent Theorem}
\newtheorem{definition}{\noindent Definition}
\newtheorem{theorem-definition}{\noindent Theorem-Definition}
\newtheorem{conjecture}{\noindent Conjecture}
\newtheorem{statement}{\noindent Proposition}
\newtheorem{remark}{\noindent Remark}
\newtheorem{problem}{\noindent Problem}
\newtheorem{definition-theorem}{\noindent definition-theorem}
\def\Erg{\operatorname{Erg}}
\def\Inv{\operatorname{Inv}}
\def\Aut{\operatorname{Aut}}
\title{Asymptotic theory of  path spaces of graded graphs and its applications}
\author{A.~M.~Vershik\thanks{
St.~Petersburg Department of Steklov Institute of Mathematics, Mathematical Department of St.~Petersburg State University,
Moscow Institute for Information  Transmission Problems. E-mail: avershik@gmail.com.
Supported by the Russian Science Foundation grant 14-11-00581.}}
\begin{document}

\maketitle

\centerline{\fontsize{15}{15}{A version of Takagi Lectures (27--28 June 2015, Tohoku University, Sendai, Japan)}}

\bigskip
\bigskip

\begin{abstract}
The survey covers several topics related to the
asymptotic structure of various combinatorial and analytic objects such as the path spaces  in graded graphs (Bratteli diagrams), invariant measures with respect to countable groups, etc.  The main subject is the asymptotic structure of filtrations and a new notion of standardness. All graded
graphs and all filtrations of Borel or measure spaces can be divided into two classes: the standard ones, which have a regular behavior at infinity, and the other ones. Depending on this property, the list of invariant measures can either be well parameterized or have no good parametrization at all. One of the main results is a general standardness criterion for filtrations. We consider some old and new examples which  illustrate the usefulness of this point of view and the breadth of its applications.
\end{abstract}
\newpage
\tableofcontents

\section{Introduction. What is the asymptotic theory of algebraic and combinatorial objects}

In this survey, I will describe several facts which belong to various areas of mathematics, such as  functional analysis, dynamical systems, representations, combinatorics, random processes, etc., and which can be briefly formulated as the \textbf{\it asymptotic theory of inductive limits in various categories}. The fundamental object
for a theory of this type is a graded graph, or a branching graph (Bratteli diagram); it was originally defined in the theory of $AF$-algebras, but then it became clear that the role of this simple notion is of much wider importance. {\it Most important is the structure and asymptotics of the space of paths of this graph}. The so-called tail filtration in the space of paths
can be regarded from the viewpoint  of the theory of filtrations. In particular, the notion of a standard filtration
allows one to give a preliminary classification of graded graphs. The classification of metric spaces with measures and its generalization give further invariants
of filtrations and, consequently, graded graphs.

If we equip a graded graph with an additional structure (such as a lexicographic order on the paths, or cotransition, or the tail filtration, etc.), we obtain a very rich theory which is related to many areas of mathematics.

\medskip
  First of all, I want to emphasize that there are two main problems concerning a graded graph:

 1/ To list the so-called central, or invariant, measures (probability or not) on the paths of the graph. This will be one of the fundamental problems for us. We will see that many questions from representation theory, the theory of Markov processes, as well as from ergodic theory, group theory, asymptotic combinatorics, can be reduced to this problem.

2/ To find typical objects and their asymptotics,
representations, Young diagrams, generic configurations,
limit shapes with respect to statistics and invariant measures on the space of paths.

There are many other problems related to the above ones, such as the calculation of the K-functor of the algebra (group) with a given branching graph, the analysis of the generating functions of ``generalized binomial coefficients,'' which appear in  combinatorics and statistical physics, etc.

\medskip

These questions are related to what in the 1970s  I called ``Asymptotic Representation Theory,'' but in this paper I can only briefly mention this, and will talk about a wider understanding of asymptotic theory of  graded graphs:

1) a new part of ergodic theory (adic dynamics);

2) a new look on the theory of various boundaries regarded as sets of invariant measures, and
on the classification of traces and characters in the asymptotic theory of representations;

3) the theory of filtrations (= decreasing sequences of $\sigma$-algebras in measure theory), the
notion of standardness, and the classification of  measurable functions using invariant measures.

\medskip

This article is written as an extension of my talk at the 15th Takagi Lectures, and I more or less follow  the preliminary
text published in \cite{Tak}.

In the second section, we define the main notions related to graphs, the space of paths, boundaries, additional structures, and discuss  links  to dynamics and measure theory.

The third section is devoted to the geometric approach to projective limits and the theory of boundaries; we define the main notions of standardness and intrinsic metric.

In Section~4, we present the current state of the theory of filtrations in measure-theoretic and Borel categories, and define the general notion of standardness.

In Section~5, we illustrate the link between the problem of finding invariant measures and the problem of classification of measurable functions of several variables.

Section~6 contains examples. Some of them are old, but we  also give recent examples of exit (or absolute) boundaries for random walks on trees and for invariant random subgroups\footnote{IRS appeared simultaneously and independently in several areas \cite{Ab,Fr2010,Fr2012}, in particular, in connection with totally nonfree actions
with an invariant measure and the structure of factor representations of countable groups.}  of the infinite symmetric group.
   The last example is closely related to the theory of characters of the infinite symmetric group and to our  model of factor representations of type II$_1$ for this group \cite{VeK81D}.

I produced many (perhaps, not all) references to known theorems. The proofs of the new results mentioned in the paper will be published in an article which is currently in preparation.
\medskip

{\bf Acknowledgments.} The anonymous referees provided very important and detailed remarks, questions, and suggestions on the exposition of  the paper.  N.~Tsilevich helped with the language, all figures presented  in this article were prepared by A.~Minabutdinov. To all of them the I express my deep gratitude.

\newpage

\section{The combinatorial and dynamical theory of  ${\Bbb N}$-graded graphs}

 \emph{${\Bbb N}$-graded graphs (= Bratteli diagrams), special structures on the space of paths, the tail filtration, central measures, the exit boundary, adic dynamics.}

In this section, we define the main structures on a branching graph  and its Markov interpretation, the lexicographic order and the adic transformation, formulate the list of specific problems on invariant and central measures.

We formulate the main problem, that of the description of the ergodic Markov measures with a given set of cotransition probabilities and, in particular, the description of the set of central measures on the space of paths.
The notion of an adic transformation and ``Bratteli--Vershik diagrams'' provides a kind of new universal dynamics and opens a new direction in ergodic theory. The Markov interpretation of a graph gives a new approach to the problem of
different kinds of boundaries in harmonic and probabilistic analysis. We also obtain a universal model in the metric theory of filtrations.

\subsection{Locally finite $\mathbb N$-graded graphs, path space, tail filtration, group of transformations}

Consider a locally finite, infinite $\mathbb N$-graded graph~$\Gamma$ (= Bratteli diagram).
The set of vertices graded by $n \in \{0,1, \dots\}$ will be denoted by $\Gamma_n$ and called the $n$th {\it level} of~$\Gamma$:
 $$\Gamma=\coprod_{n\in \mathbb N} \Gamma_n;$$
for the $0$th level, we have $\Gamma_0=\{\emptyset\}$, that is, it consists of the single vertex~$\emptyset$.  We assume that every edge joins two vertices of neighboring levels, every vertex has at least one successor, every vertex except the initial one has at least one predecessor. In what follows, we also assume that the edges of $\Gamma$ are simple,\footnote{For our purposes, allowing Bratteli diagrams to have multiple edges does not give anything new, since the cotransition probabilities introduced below must be replaced and generalized in the case of multiplicities of edges. But in the framework of general filtration theory, multiple edges are needed.} and no other assumptions are imposed (see Figure 1).

The graph  $\Gamma$ can, obviously, be defined by the sequence of $0-1$ matrices $M_n$, $n=1,2,\dots$, where $M_n$ is the  $|\Gamma_{n-1}|\times |\Gamma_n|$ adjacency matrix for the bipartite graph $\Gamma_{n-1} \cup \Gamma_n$.
A very special case of a graded graph is as follows: all levels $\Gamma_n$ are identified with each other and all adjacency matrices are the same; these are so-called {\it stationary graded graphs} (they correspond to stationary Markov chains, see the next section).
\begin{figure}[h!]
\center{\includegraphics[scale=2]{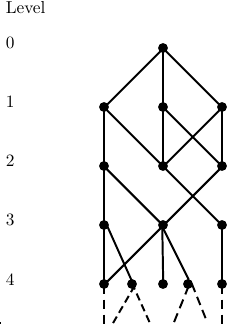}}
\caption{A graded (Bratteli) diagram.}
\label{fig:diagramVertical}
\end{figure}

\medskip
It is well known (see \cite{Brat}) how one can construct a locally semisimple algebra
 ${\cal A}(\Gamma)$ over $\mathbb C$ canonically associated with a graded graph $\Gamma$: this is the direct limit of sums of matrix algebras:
 $${\cal A}(\Gamma)=\lim_n \{{\cal A}_n; I_n\},$$
where
$${\cal A}_n=\sum_{v\in \Gamma_n}{\mathbb M}_{l(v)}(\mathbb C);$$
 here $l(v)$ is the number of paths between $\emptyset$ and $v \in \Gamma_n$; the restriction of the embedding $I_n:{\cal A}_n \mapsto {\cal A}_{n+1}$ to each subalgebra ${\mathbb M}_{l(v)}$ is  the block diagonal embedding of ${\mathbb M}_{l(v)}$ to all algebras ${\mathbb M}_{l(u)}$ for which the vertex $u\in \Gamma_{n+1}$ follows the vertex $v\in \Gamma_n$.

  However, here we do not consider the algebra ${\cal A}(\Gamma)$ in detail, and do not discuss the fundamental relation of the notions introduced below with this algebra and its representations; this problem is worth a separate study. This important link between algebras and graphs has been studied in many papers; this is the so-called theory of AF-algebras etc. See
\cite{Brat,Br,EffHa,Strat,Ell,Ror,VeKe}.

\medskip
 A path $t$ in $\Gamma$ is, by definition, a (finite or infinite) sequence of edges of $\Gamma$ starting at the initial vertex $\emptyset$ in which the end of every edge is the beginning of the next edge (for graphs without multiple edges, this is the same as a sequence of vertices with the appropriate condition). The space of all infinite paths in  $\Gamma$ is denoted by $T(\Gamma)$. It is a very important object for us; in a natural sense, it is the inverse limit of the spaces of finite paths (leading from the initial vertex to vertices of some fixed level), and thus is a Cantor-like compact set with the weak topology. Cylinder sets in $T(\Gamma)$ are sets defined in terms of conditions on initial segments of paths up to level $n$; they are clopen (= closed and open) and determine a base of the topology of $T(\Gamma)$. There is a natural notion of {\it tail equivalence relation} $\tau_{\Gamma}$ on $T(\Gamma)$: two infinite paths are tail-equivalent if they eventually coincide; one also says that such  paths lie in the same block of the {\it tail partition}.

  The {\it tail filtration} $\Xi(\Gamma)=\{\mathfrak{A}_0\supset \mathfrak{A}_1 \supset \cdots\}$
  is the decreasing sequence of $\sigma$-algebras $\mathfrak{A}_n$, $n \in \mathbb N$,  where $\frak {A}_n$  consists of all Borel sets $A\subset T(\Gamma)$ such that along with every path
  $A$ contains all paths coinciding with it from the $n$th level. In an obvious sense,  ${\frak A}_n$  is complementary to the finite $\sigma$-algebra of cylinder sets of order
  $n$. The key idea is to apply the theory of decreasing filtrations  to the analysis of the structure of path spaces and measures on them.

\begin{definition}
On the path space  $T(\Gamma)$, we define the {\it tail partition $\xi_{\Gamma}$ and the {\it tail equivalence relation $\tau_{\Gamma}$}}: two paths are in the same class of $\tau_{\Gamma}$, or belong to the same element of
  $\xi_{\Gamma}$, if they eventually coincide.
  \end{definition}

  The equivalence relation $\tau_{\Gamma}$ is a {\it hyperfinite equivalence relation}, which means that it is the limit of the decreasing sequence of finite relations $\tau_{\Gamma}^n$,  which are defined in the same way with the superscript $n$ meaning that the corresponding class of paths consists of paths coinciding starting from the $n$th level.

   Let us introduce a group of transformations of the path space  $T(\Gamma)$. Note that for every vertex $v\in \Gamma_n$, the set of all finite paths from $\emptyset$ to $v$ has a natural structure of a tree of height $n$, and, by definition, the group $G_n^v(\Gamma)\equiv G_n^v$ is the finite group of transformations of $T(\Gamma)$ that is the group of automorphisms of this tree. Consider the group $\sum_{v\in \Gamma_n}G_n^v\equiv G_n(\Gamma)$; this is the group of transformations which can be called cylinder transformations of rank $n$. The sequence of groups $G_n(\Gamma)$, $n=1,2, \dots$, increases monotonically with respect to the natural embeddings $G_n(\Gamma)\subset G_{n+1}(\Gamma)$, and finally we obtain the group of all {\it cylinder transformations}:
  $$G(\Gamma)=\operatorname{lim ind}_n G_n(\Gamma).$$
  It is clear that any element of the group $G(\Gamma)$ preserves the tail equivalence relation $\tau(\Gamma)$; moreover,
  $G(\Gamma)$ is the group of all transformations of the space $T(\Gamma)$ that fix all classes of the tail partition $\tau(\Gamma)$.

   Later we will define more general {\it adic transformations of paths}, which are defined not for all paths, but, in a natural sense, are limits (in measure) of sequences of cylinder transformations.

The properties of the graphs and groups defined above are very different for various graded graphs and must be analyzed carefully.

\medskip
Let us give a list of first examples of graphs:

$\bullet$ Stationary graphs (e.g. Fig.~\ref{fig:diagramVertical}), for which all levels and all sets of edges between two levels are isomorphic, e.g., the dyadic graph and the Fibonacci graph (Fig.~\ref{fig:diagramFibonacci}).

$\bullet$ Classical graphs: the Pascal graph (Fig.~\ref{fig:Pascal}), the Euler graph, the Young graph (Fig.~\ref{fig:Young}), their multidimensional generalizations.

$\bullet$ More complicated examples: the graph of unordered pairs, the graph of ordered pairs, Hasse diagrams of the general posets, etc.

The author believes that these objects are hidden in many mathematical problems and the study of asymptotic problems
related to graded graphs is especially important.

\begin{figure}[h!]
{\includegraphics[scale=1]{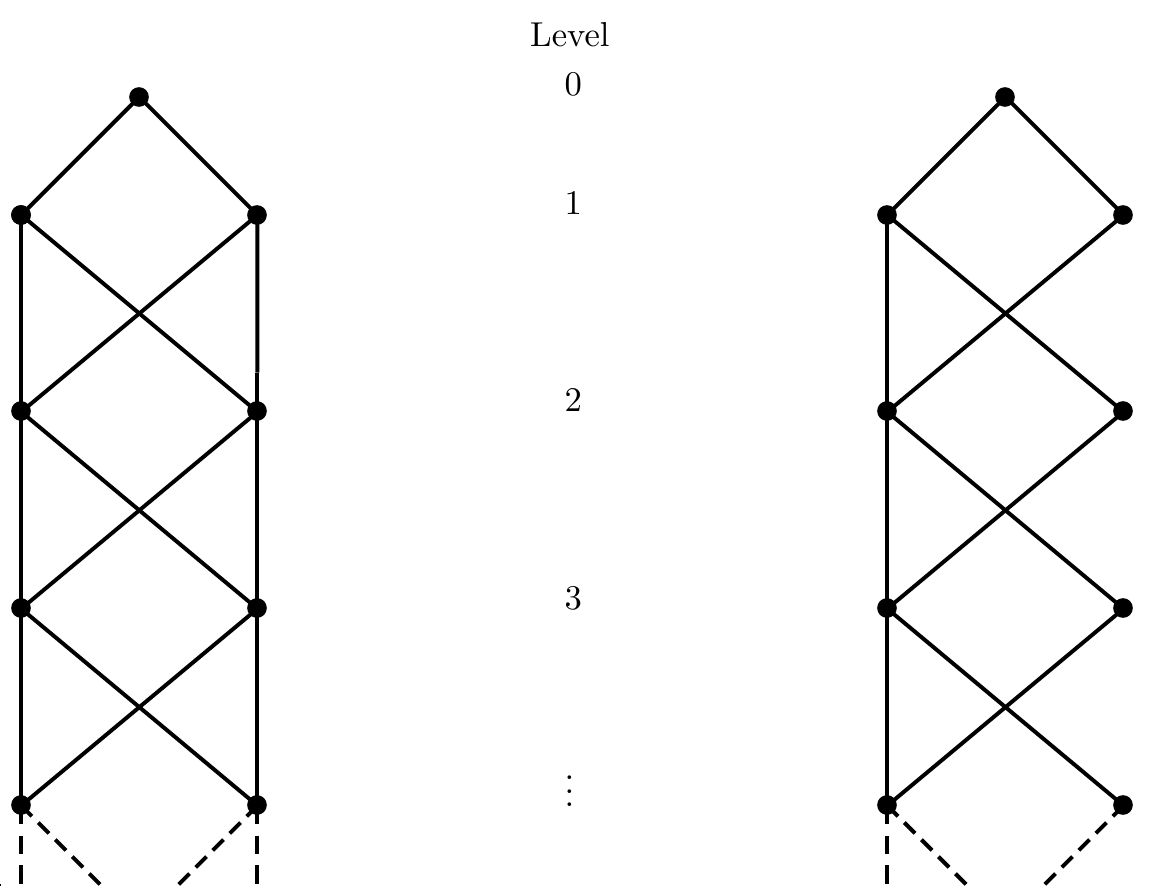}}
\caption{The dyadic diagram (left), the Fibonacci diagram (right).}
\label{fig:diagramFibonacci}
\end{figure}

\newpage

\subsection{The Markov interpretation of graded graphs,
equipment structure, central measures, boundaries}

\subsubsection{Toward a Markov compactum and measures of maximal entropy (central measures)}

Now we will consider the same object --- the space $T(\Gamma)$ of all infinite paths of a graded graph $\Gamma$ --- from another point of view. If we rotate the above picture of a graded graph (with the initial vertex on the top), see Figure~1, by 90 degrees counterclockwise, we obtain a picture that is well known to probabilists (see Figure~3).

\begin{figure}[t]
\center{\includegraphics[scale=0.5]{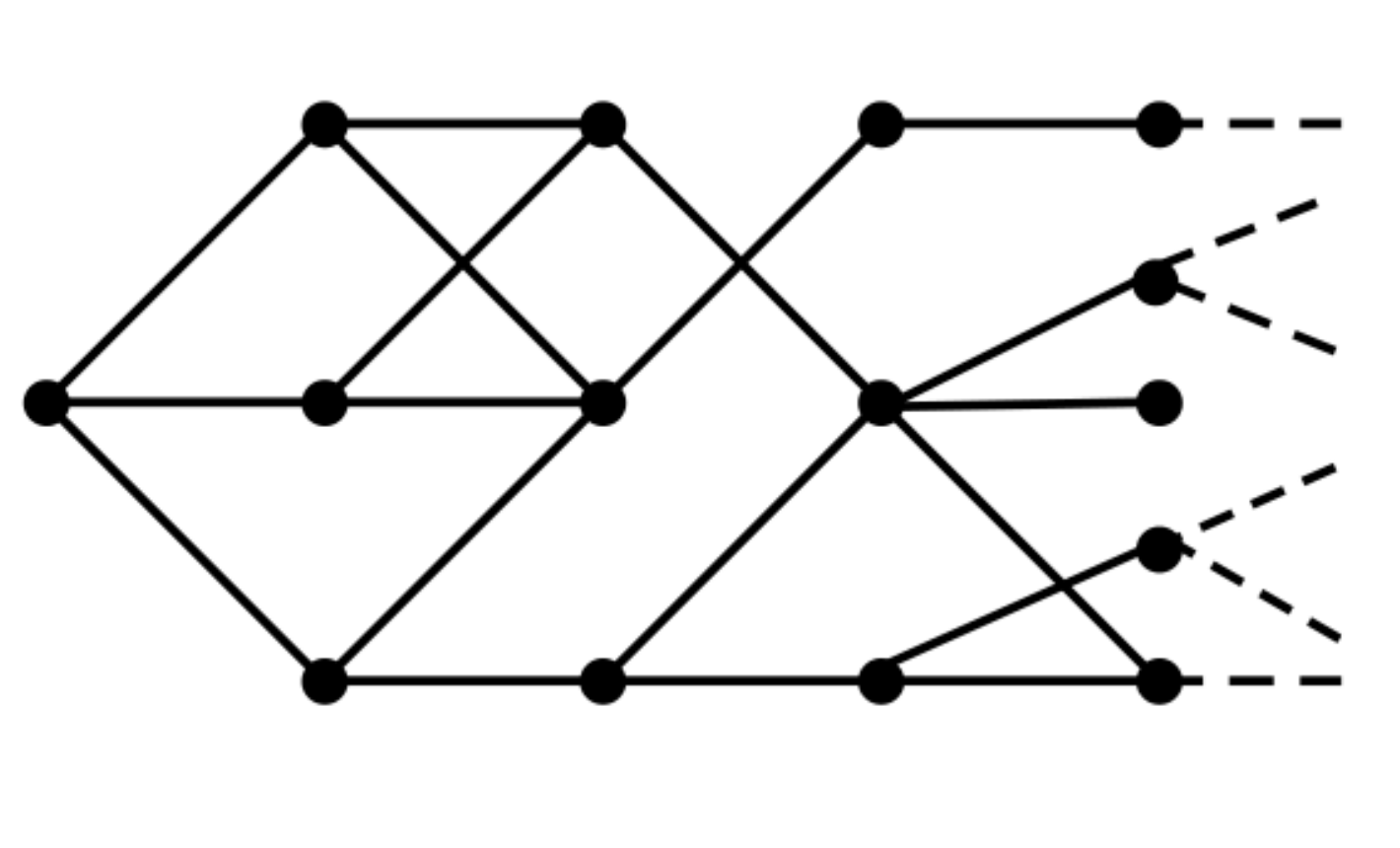}}
\caption{A Markov compactum.}
\label{fig:diagramHorizontal}
\end{figure}

 Let us regard the $\Bbb N$-grading of our graph as the discrete time $0,1,\dots$ of a topological Markov chain (in general, nonstationary) and the set of vertices of level $n$ as the state space of the chain at the time $n$.  We can view a path $\{v_n\}_{n+0}^{\infty}$ as a {\it trajectory} of the process, and the whole space of paths as the space of trajectories of the {\it Markov topological chain}; the transitions of this chain are determined by the matrices $M_n$
 defined above. We do not fix any probability measure on the space of trajectories.

The well-known notion of a (stationary) topological Markov chain (see \cite{Erg}) is a special example of our definition: in this case, all levels are mutually isomorphic and the sets of transitions do not depend on the levels.

So, in the study of the path spaces $T(\Gamma)$ of graded graphs $\Gamma$, it is convenient  to use the terminology and theory of Markov chains, more precisely, the  theory of one-sided {\it Markov compacta}, not stationary in general.
However, as compared to the stationary case,
 many examples of  graded graphs give completely new examples of the behavior of Markov chains.  After the rotation, the combinatorial and algebraic world associated with Bratteli diagrams turns into the probabilistic and dynamical world of Markov chains. This link is extremely important and fruitful, especially for us, because we will consider
probability measures on the path space  $T(\Gamma)$.

 Recall the notion of a Markov probability measure on a
 Markov compactum; this is a measure $\mu$ with the following property: for every $n$, the conditional measure of $\mu$ under the condition $v_n=v\in \Gamma_n$ is the direct product of a measure on $\prod_{k=0}^{n-1}\Gamma_k$
and a measure on $\prod_{k=n+1}^{\infty}\Gamma_k$. In other words, the past and the future are independent for every fixed state at time $n$ and for every $n=1,2, \dots$.
We will consider the theory of Markov measures on the path space $T(\Gamma)$. The following special case of Markov measures is very important in what follows.

\begin{definition}
A Markov measure $\nu$ on  $T(\Gamma)$ is called a central measure if for every vertex $v$ the conditional measure $\nu_v$ induced by  $\nu$ on the finite set of all finite paths that join the initial vertex $\emptyset$ with $v$ is the uniform measure.
\end{definition}

It is clear from the definition that any cylinder transformation preserves any central measure. In the case of a stationary Markov compactum, central measures are called, for a certain reason, measures of {\it maximal entropy}.

 The notion of a central measure on the space $T(\Gamma)$ is determined intrinsically by the structure of the branching graph $\Gamma$. The set of all central measures on the path space  $T(\Gamma)$ will be denoted by $\Sigma(\Gamma)$; this is a Choquet simplex with respect to the ordinary convex structure on the space of probability measures with the weak topology, see \cite{Fel}. The set of extreme points (Choquet boundary) of this simplex is the set of ergodic central measures, and we denote it by $\Erg(\Gamma)$.
 Any central measure can be uniquely decomposed into an integral over the set of ergodic measures. The set $\Erg(\Gamma)$ is of most interest to us. Note that for every ergodic central measure,
 the action of the group $G(\Gamma)$ of cylinder transformations
 on the space $T(\Gamma)$ is ergodic in the sense of ergodic theory (no nontrivial\footnote{Here the word ``nontrivial'' means that the measure of the subset is not equal to zero or one.} invariant measurable subsets), and vice versa: if for a central measure $\mu$, the action of the group $G(\Gamma)$ is ergodic, then this measure is ergodic as a central measure.

\subsubsection{Cotransition probabilities and an equipment of a graded graph}

 Now we introduce an additional structure on a graded graph, in order to extend the notion of central measures. Namely, we define a {\it system of cotransition probabilities}, which we call a {\it $\Lambda$-structure},
 $$\Lambda=\{\lambda=\lambda_v^u;\; u\in \Gamma_n, v\in \Gamma_{n+1}, (u,v)\in \mbox{edge}(\Gamma_n,\Gamma_{n+1}),\;n=0,1, \dots\},$$
 by associating with each vertex $v \in \Gamma_n$, a probability vector whose component
  $\lambda_v^u$ is the probability
 of an edge $u\prec v$ entering $v$ from the previous level; here $\sum\limits_ {u:\, u\prec v}\lambda_v^u=1$ and $\lambda_v^u > 0$.
 We emphasize that  a $\Lambda$-structure (e.g., cotransition probabilities) is defined for all vertices $v,u\in \Gamma$ with  $u\prec v $, and $\lambda_v^u$ may not have zero values.

\begin{definition}
An equipped graph is a pair $(\Gamma, \Lambda)$ where $\Gamma$ is a graded graph and $\Lambda$ is a
$\Lambda$-structure, i.e., a system of cotransition probabilities on its edges.
 \end{definition}

The term ``cotransition probabilities'' is borrowed from the theory of Markov chains:
if we regard the vertices of $\Gamma$ as the states of a Markov chain starting from the initial state $\emptyset$ at time $t=0$,
and the numbers of levels as moments of time, then  $\Lambda=\{\lambda_v^u\}$ is interpreted as the system of cotransition probabilities for this Markov chain:
 $$\mbox{Prob} \{x_{t}=u|x_{t+1}=v\}=\lambda_v^u. $$

In the probability literature (e.g., in the theory of random walks),  cotransition probabilities are usually defined not explicitly, but as the cotransition probabilities of a given Markov process. We prefer to define them directly, i.e., include them into the input data of the problem.

Recall that in general a system of cotransition probabilities does not  uniquely determine the transition probabilities $\mbox{Prob} \{x_{t+1}=v|x_t=u\}$. At the same time, since  the initial distribution is fixed (in our case, it is the $\delta$-measure at $\emptyset$), the transition probabilities uniquely determine the list of cotransition probabilities. So, every Markov measure on $T(\Gamma)$ determines a $\Lambda$-structure.

The most important special case of a system of cotransition probabilities, corresponding to the central measures which we  have already defined,  is the following one:
 $$
 \lambda_v^u=\frac{\dim(u)}{\sum\limits_ {w:\,w\prec v} \dim(w)}=
 \frac{\dim(u)}{\dim(v)},
 $$
where $\dim(u)$ is the number of paths leading from the initial vertex $\emptyset$ to  $u$ (i.e., the dimension of the representation of the algebra $A(\Gamma)$ corresponding to the vertex $u$). In other words, the probability to get from $v$ to $u$ is equal to the fraction of paths that lead from   $\emptyset$ to $u$ among all the paths that lead from
 $\emptyset$ to  $v$. This system of cotransition probabilities is canonical, in the sense that it is determined by the graph only.
  Central measures have been studied  very intensively in the literature on Bratteli diagrams, as well as in combinatorics, representation theory, and algebraic settings, but mainly for specific diagrams (see \cite{VeK81,VeK85,KOO,GK,GO}). In terms of the theory of
C$^*$-algebras, central measures are nothing more than traces on the algebra $A(\Gamma)$, or characters of locally finite groups in the case when the graded graph corresponds to a group algebra. Ergodic central measures correspond to  indecomposable traces or characters.

It is convenient to regard a system of cotransition probabilities as a system of $d_n\times d_{n+1}$ Markov matrices:
 $$
 \{\lambda_v^u\}, \quad u \in \Gamma_n, v \in \Gamma_{n+1};\quad |\Gamma_n|=d_n,\; |\Gamma_{n+1}|=d_{n+1}, \;n \in \mathbb N;
 $$
 these matrices generalize the  $0-1$ adjacency
  matrices of the graph
 $\Gamma$.
 Our main interest lies in the asymptotic properties of this sequence of matrices. In this sense, the whole theory developed here is a part of the asymptotic theory of infinite products of Markov matrices, which is important in itself.

\subsubsection{Measures, central measures, boundaries}

A measure on the path space of a graph is called {\it ergodic} if the tail $\sigma$-algebra (i.e., the intersection of all  $\sigma$-algebras of the tail filtration) is trivial $\bmod 0$,\footnote{The symbol ``$\bmod 0$'' means that the object or notion preceding it is understood up to changes on a subset of zero measure.} i.e., consists of two elements.

A Markov measure $\mu$ agrees with a given system $\Lambda$
of cotransition probabilities if the collection of cotransition probabilities of $\mu$ (for all vertices) coincides with $\Lambda$.

\begin{definition}  Denote by $\Sigma(\Gamma)_\Lambda$ the set of all  Markov measures on $T(\Gamma)$ with  cotransition probability $\Lambda$.
The set of ergodic Markov measures from $\Sigma(\Gamma)_{\Lambda}$ will be denoted by $\Erg(\Gamma)_{\Lambda}$.

  The set of all central measures on the path space  of a graph $\Gamma$ will be denoted by $\Sigma(\Gamma)$, and the set of ergodic central measures, by $\Erg(\Gamma)$.
  The  list of measures $\Erg(\Gamma)_{\Lambda}$ will be called the absolute boundary of the equipped graph  $(\Gamma,\Lambda)$. The set of ergodic central measures will be called the \textbf{absolute boundary of the graph $\Gamma$} and denoted by $\Erg(\Gamma)$.\footnote{We use the term ``absolute boundary'' instead of other terms, such as ``exit,'' ``entrance,'' Martin boundary, etc. It seems that in specific situations,  such as the theory of Markov processes,  these terms (which were used by E.~Dynkin) are natural, but in the context of graded graphs and general dynamics it is better to have a more neutral term. It is important that the absolute boundary is an invariant of an ergodic equivalence relation, while the Martin boundary is not: it depends on an approximation of this relation (see \cite{2014,VM2015}).}
 \end{definition}

We will see that $\Sigma_{\Gamma}(\Lambda)$ is a projective limit of finite-dimensional simplices.

 The absolute boundary is a topological boundary, and, as we will see, it is the Choquet boundary of a certain simplex (a projective limit of finite-dimensional simplices).

\begin{problem}\label{problem1} Enumerate the set $\Sigma(\Gamma)_{\Lambda}$ of all Markov measures with a given system of cotransition probabilities $\Lambda$ and, in particular, the set of ergodic measures $\Erg(\Gamma)_{\Lambda}$, and to study its asymptotic behavior.\footnote{Recall that to describe a Markov measure on the path space  means to describe its transition probabilities.}
\end{problem}

\begin{remark}{\rm
It may happen that for some measure from  $\Sigma(\Gamma)_{\Lambda}$ and for a given vertex $v\in \Gamma$,
the measure of paths that go through $v$ vanishes.
This means that the measure is concentrated on the path space  $T(\Gamma')$ of some subgraph $\Gamma'\subsetneqq \Gamma$, for whose vertices the cotransition probabilities  are positive.
}\end{remark}

The asymptotic behavior of central measures can be very different even for the same graph.  For example, in the case of the graph of unordered pairs (see below), there are central measures with chaotic behavior, as well as those whose behaviour  is more smooth (``standard'' in the sense which will be defined later). On the contrary, for classical graphs such as the Pascal graph, the Young graph, etc., all central measures have a more regular character (``standard''), in particular, we have so-called ``limit shape theorems.''

 Recall that the {\it Poisson--Furstenberg boundary of a given Markov measure on $T(\Gamma)$} is its tail measure space, or the quotient space over the tail equivalence relation. This boundary is regarded as a measure space and, in some sense, it is only a part of the absolute boundary.

In connection with cotransition probabilities, it makes sense to point out the following general terminology which does not use a graded structure on the space of paths. The system of cotransition probabilities allows us to define a {\it cocycle\footnote{A cocycle on an equivalence relation is a function (in our case, with values in ${\mathbb R}_+$) on the set of pairs of equivalent elements satisfying the following properties:
$c(\alpha,\beta)c(\beta,\alpha)=1$, $c(\alpha,\beta)c(\beta,\gamma)=c(\alpha,\gamma)$.} on the tail equivalence relation}, i.e., an ${\mathbb R}_+$-valued function $(\gamma_1,\gamma_2) \mapsto c(\gamma_1,\gamma_2)$  on the space of pairs of tail-equivalent paths, as the ratio of the conditional measures of these two paths, or the ratio of the products of cotransition probabilities along the paths:
$$c(\gamma_1,\gamma_2)=\frac {\prod \lambda_{a_i}^{a_{i-1}}}
{ \prod \lambda_{b_i}^{b_{i-1}}},$$
where $\gamma_1=\{a_1,a_2,\dots, a_k,\dots\}$, $\gamma_2=\{b_1,b_2,\dots, b_k,\dots\}$, $a_n=b_n$, $n>k$
 (the product is well defined, because the ratio is finite).

Consider a measure that agrees with the tail equivalence relation. For any two paths that coincide starting from the $n$th level, for every $m>n$, the ratio of the conditional measures of the partition $\xi_m$ into classes of paths that coincide starting from the $m$th level does not depend on $m$, and thus we have a well-defined cocycle.\footnote{For every subrelation of an equivalence relation with finite blocks, we have the usual conditional measures, and the ratio of the conditional measures of two points in a block does not depend on the choice of this subrelation. This is a simple and fundamental transitivity property of conditional measures which is never mentioned in  textbooks and which holds not only in the hyperfinite case. In the framework of the theory of dynamical systems, the cocycle is simply the Radon--Nikodym density, and the set of measures with a given cocycle is the set of quasi-invariant measures with a given Radon--Nikodym density.} So, our main problem~\ref{problem1} is the problem of describing the probability measures on the path space with a given cocycle.

Note that if an equivalence relation is the orbit partition for an action of a group  with a quasi-invariant measure, then the cocycle coincides with the Radon--Nikodym cocycle for the transformation group   (see, e.g., \cite{Schm}):
  $$c(g\alpha,\alpha)=\frac{d\mu(g\alpha)}{d\mu(\alpha)}.$$

 In our case, the cocycle has a special form (the product of probabilities over edges) and is called a Markov cocycle.

 \begin{remark} {\rm It is possible to generalize the notion of  cotransition probabilities and define an equipped graph
 for any oriented graphs: one can define an arbitrary system of probabilities on the set of ingoing edges of each vertex. The problem is still to describe the absolute boundary, i.e., the collection of all ergodic measures on the set of directed paths with given conditional entrance probabilities. This generalization could give  interesting new examples of exit boundaries for general graphs.}
\end{remark}

\subsubsection{Borel equivalence relations}

Assume that in a standard Borel space $X$ a hyperfinite equivalence relation~$\tau$ is defined; this means that
$\tau$ is an increasing limit of a sequence of Borel
equivalence relations $\xi_n$, $n=1,2, \dots$,\footnote{The term ``Borel'' means that $\xi_n$ is the partition into the preimages of a Borel map defined on $X$.} with finite equivalence classes.
It is not difficult to prove the following proposition.

\begin{statement}
For every pair $(X,\tau)$ where $X$ is a standard Borel space and $\tau $ is a hyperfinite equivalence relation on $X$  there exists a graded graph $\Gamma$ and a Borel isomorphism between $(X,\tau)$ and $(T(\Gamma), \tau_{\Gamma})$ that sends $\tau $ to the tail equivalence relation $\tau_{\Gamma}$.  Every ergodic Borel measure on $X$ with a given cocycle defined for the equivalence relation $\tau$ corresponds under this isomorphism to an ergodic Markov measure on the equipped graph $T(\Gamma)$. In particular, an invariant ergodic measure on the equivalence relation $\tau$ corresponds to a central measure on
$T(\Gamma)$.
\end{statement}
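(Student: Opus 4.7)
My plan is to realise $(X,\tau)$ directly as the path space of a Bratteli diagram built from an approximating sequence of $\tau$, and then transport measures and cocycles across this realisation.

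First I invoke hyperfiniteness and write $\tau=\bigcup_{n\ge 0}\xi_n$ with $\xi_0\subset\xi_1\subset\cdots$ a sequence of Borel equivalence relations having finite classes. Prepending $\xi_0=\Delta_X$ if necessary keeps the union equal to $\tau$ and ensures $\bigcap_n\xi_n=\Delta_X$ automatically. I then take $\Gamma_0=\{\emptyset\}$, $\Gamma_n=X/\xi_{n-1}$ for $n\ge 1$ (so $\Gamma_1=X$ and each higher $\Gamma_n$ is a standard Borel space), and draw a single edge from $u\in\Gamma_n$ to $v\in\Gamma_{n+1}$ whenever $u\subseteq v$ as subsets of $X$, together with an edge from $\emptyset$ to every element of $\Gamma_1$. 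That the quotients $\Gamma_n$ and the containment edge relation are Borel standard is supplied by the classical Lusin--Novikov selection theorem applied to Borel equivalence relations with finite vertical sections.

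The candidate isomorphism is the Borel map $\Phi:X\to T(\Gamma)$ sending $x$ to the nested chain $(\emptyset,\{x\},[x]_1,[x]_2,\dots)$. Injectivity is immediate from the singleton $\{x\}$ at level $1$; surjectivity holds because any path in this graph is a nested chain of finite nonempty subsets of $X$ whose level-$1$ entry is a singleton $\{x\}$, and then $\Phi(x)$ is that path. The intertwining property is transparent from the nesting of classes: $x\,\xi_n\,y$ iff $[x]_m=[y]_m$ for every $m\ge n$, which is exactly the condition that $\Phi(x)$ and $\Phi(y)$ agree from level $n+1$ onward; taking unions over $n$ yields $\Phi(\tau)=\tau_\Gamma$.

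For the measure part, I push a Borel measure $\mu$ on $X$ together with its Radon--Nikodym cocycle $c$ along $\tau$ forward by $\Phi_*$. At each vertex $v\in\Gamma_{n+1}$, the predecessors $u\in\Gamma_n$ correspond to the $\xi_{n-1}$-subclasses of $v$, and the $c$-normalised weights of these finite subclasses are exactly the cotransition probabilities $\{\lambda_v^u:u\subseteq v\}$ defining a Markov measure $\Phi_*\mu$ on the equipped graph $(\Gamma,\Lambda)$. Ergodicity is preserved because $\Phi$ identifies the tail $\sigma$-algebras on the two sides, and in the $\tau$-invariant case $c\equiv 1$ the cotransitions become $\lambda_v^u=|u|/|v|=\dim(u)/\dim(v)$, which is the defining property of a central measure.

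The one non-routine point is descriptive set theoretic: verifying that the quotients $X/\xi_n$ and the containment relation between consecutive levels are genuinely Borel standard and that $\Phi$ and $\Phi^{-1}$ are both Borel. All of this follows from the same Lusin--Novikov uniformisation used above, applied to the Borel relations $\xi_n$ with finite sections; once that measurable framework is in place the rest of the argument is a direct dictionary between the hyperfinite equivalence relation and the tail structure of the Bratteli diagram.
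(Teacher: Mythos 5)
The paper itself offers no proof of this proposition (it is stated as ``essentially known'' with pointers to \cite{Kech,Schm,VEq}), so your argument has to stand on its own --- and it has a genuine gap. The object you construct is not a graded graph in the sense of the paper. The levels $\Gamma_n=X/\xi_{n-1}$ are uncountable standard Borel spaces (already $\Gamma_1=X$ itself), the initial vertex $\emptyset$ has uncountably many successors, and there are no finite adjacency matrices $M_n$; consequently your $T(\Gamma)$ is not the Cantor-like compactum (inverse limit of \emph{finite} path spaces, with a base of clopen cylinders) that the paper's definitions of the tail filtration, of Markov measures, and of central measures all presuppose. In fact your $T(\Gamma)$ is just $X$ relabeled: every path is determined by its level-one vertex $\{x\}$, so the map $\Phi$ is a tautology and nothing combinatorial has been encoded. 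The whole content of the proposition is precisely to produce a locally finite diagram with finite levels whose tail equivalence relation realizes $\tau$.

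To close the gap you need a genuine finite symbolic encoding: fix a refining sequence of finite Borel partitions of $X$ that generates the Borel structure, use Lusin--Novikov selectors to order the finite $\xi_n$-classes in a Borel way, and let a vertex of level $n$ record the isomorphism type of the labelled nested partition structure of the class $[x]_{\xi_{n-1}}$ rather than the class itself; you must also arrange, by re-choosing or telescoping the filtration $\{\xi_n\}$, that class sizes at each level are uniformly bounded, since otherwise even the set of such types at a fixed level is infinite and local finiteness fails. This is exactly the nontrivial descriptive-set-theoretic work that your last paragraph gestures at but does not carry out: Lusin--Novikov by itself does not turn $X/\xi_n$ into a finite level set. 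The measure-theoretic half of your argument (cotransition probabilities given by conditional measures of subclasses, with $\lambda_v^u=|u|/|v|=\dim(u)/\dim(v)$ in the invariant case) is the right dictionary and would survive essentially unchanged once a correct graph is in place.
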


This proposition is essentially known (see \cite {Kech,Schm,VEq}), but it is usually considered in the framework of group actions.

Thus, the general theory of hyperfinite Borel equivalence relations is a special case of our theory of Markov measures
for some graph $\Gamma$. But an additional structure is
a fixed approximation of the tail equivalence  relation which we have on graded graphs.

From this point of view, we try to construct a theory of realizations of hyperfinite equivalence relations on a standard Borel space as tail equivalence relations on the path space $T(\Gamma)$.

In the category of Borel spaces, the classification of hyperfinite equivalence relations was obtained in \cite{Kech}; in the measure-theoretic category, by the famous Dye theorem, there is only one, up to isomorphism, ergodic hyperfinite invariant relation.

But we want to consider another, more delicate, category, with a more detailed notion of isomorphism. In brief, {\it it is the category of spaces of the type $T(\Gamma)$, or, more exactly,  Cantor spaces equipped with a decreasing filtration of finite type},  with ``asymptotic isomorphisms'' as morphisms (\cite{2015}). The meaning of these notions will be discussed later in the section on filtrations.

  \subsection{A lexicographic ordering, the adic transformation, and the globalization of Rokhlin towers}

\subsubsection{The definition of the adic transformation}

In this section, we define another additional structure on a graded graph: a linear order on each class of tail-equivalent paths. We will call it an ``adic structure'' on the graded graph. It is similar to a $\Lambda$-structure on an equipped graded graph, but has different applications.

We start with the definition of a local order on the set of edges with a given endpoint, and then define a lexicographic ordering on the paths.

\begin{definition}
Let $\Gamma$ be a graded graph; for each vertex
$v\in \Gamma$, define a linear order $\operatorname{ord}_v$ on the set of ingoing edges of $v$. Consider two paths   $\{t^i_k\}_{k=1}^{\infty}$, $i=1,2$, where $t^i_k$ is the edge
 that joins vertices of levels $k-1$ and $k$. If these paths belong to the same class of the tail equivalence relation, then for some minimal $n$ the edges $t^i_k$ with $k>n$ coincide; if $v$ is the first common vertex of both paths, then
   $$t^1>t^2$$  if and only if $t^1_{n+1} > t^2_{n+1}$  in the
sense of the order  $\operatorname{ord}_v$ on the edges.
This definition makes sense also for graded graphs with muliple edges. If there are no multiple edges, then the simplest way to define an order  on the ingoing edges is to define an order on the vertices of each level, and then  introduce an order on the ingoing edges as the order on the corresponding vertices.
\end{definition}

It is obvious that this definition gives a linear (lexicographic ``from below'') ordering on each class of the tail equivalence relation.

Consider the subset $T_0(\Gamma)$ of all paths from $T(\Gamma)$ that have the preceding and the following paths in the sense of this ordering. For a large and interesting class of graphs, $T_0(\Gamma)$ is a generic (dense open) subset of $T(\Gamma)$; moreover, we can restrict ourselves to the case where there are only two exceptional paths, as in the Pascal graph (see \cite{2013}).

Now we are ready to define an action of the group $\Bbb Z$ on the set $T_0(\Gamma)$ as follows: the generator acts as the transformation $P$ that sends a path $t$ to the next path in the sense of our ordering; this transformation is called the {\it adic transformation}.\footnote{Sometimes, the adic transformation is called the ``Vershik transformation,'' and a branching graph equipped with a lexicographic ordering is called a ``Bratteli--Vershik diagram,'' see \cite{V81}.}

The simplest example is a lexicographic ordering in the {\it dyadic graph}; all positive levels of this graph consist of two vertices, and any two vertices of neighboring levels are joined by an  edge. If we identify a path in this graph with a number from the interval $[0,1]$, then we have a natural linear ordering defined on the classes of irrational numbers from $[0,1]$ that differ by a dyadic rational number: for two numbers $t_1,t_2 \in [0,1]$ with dyadic rational difference $t_1-t_2$,
the greater one is that for which the first different digit in the dyadic decomposition is $1$. The corresponding adic transformation is the so-called odometer. The word ``adic'' is the result of deleting $p$ from the word ``$p$-adic.''

 This type of dynamics for the group $\Bbb Z$ (``adic," or ``transversal," dynamics) was defined by the author in 1981. For the stationary case, a similar definition was given by S.~Ito \cite{It}.

 For example the simplest automorphism --- so called odometer, or dyadic shift --- is realized with the dyadic graph (see Fig.~\ref{fig:diagramFibonacci}),
 the shift on the homoclinic point on the 2-torus is realized on
 Fibonacci graph  (see Fig.~\ref{fig:diagramFibonacci}) etc.

 But it is possible to realize any ergodic transformation in this form. The main fact is the following theorem (\cite{V81}).

\begin{theorem}[\cite{V81,V82}]
For every measure-preserving ergodic transformation~$S$ of a standard (Lebesgue) measure space $(X,\mu)$ with a continuous measure there exists a graded graph $\Gamma$ with a Borel probability measure $\nu$ on the path space $T(\Gamma)$ invariant under the adic transformation $P$ such that
$$(X,\mu,S) \sim (T(\Gamma),\nu,P);$$
here $\sim$ means isomorphism $\bmod\, 0$ in the sense of the theory of measure spaces.
\end{theorem}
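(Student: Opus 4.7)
The plan is to realize $S$ as the limit of a refining sequence of Kakutani--Rokhlin towers and then read off the combinatorial scheme of these towers as the desired graded graph~$\Gamma$, with the adic ``go up one floor'' operation on paths corresponding to~$S$.

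First, I would construct, by induction on $n$, a sequence of Kakutani--Rokhlin castles $\tau_n$ for $S$ with the following properties: $\tau_n$ is a finite collection of disjoint measurable columns $C_n^{(j)}$, each column being a tower $B_n^{(j)}, S B_n^{(j)}, \dots, S^{h_n^{(j)}-1}B_n^{(j)}$ of $S$-disjoint levels; the total complement $X\setminus\bigcup_j C_n^{(j)}$ has measure tending to $0$; each level of $\tau_n$ is a union of levels of $\tau_{n+1}$, so that $\tau_{n+1}$ refines $\tau_n$ as a measurable partition of $X$; and the $\sigma$-algebras $\mathfrak{B}_n$ generated by the level-partitions of~$\tau_n$ increase $\bmod\,0$ to the full Borel $\sigma$-algebra of $(X,\mu)$. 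This is a standard iterated application of the Rokhlin lemma, using the ergodicity of~$S$ to make the bases $B_n$ shrink and using a countable generating family for the Borel $\sigma$-algebra to force $\bigvee_n\mathfrak{B}_n=\mathfrak{B}(X)\bmod 0$.

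Next, I would read off the graph~$\Gamma$ directly from the castles. Put $\Gamma_0=\{\emptyset\}$, and for $n\geq 1$ let $\Gamma_n$ be the set of levels (floors) of all columns of $\tau_n$; so $|\Gamma_n|=\sum_j h_n^{(j)}$. Draw an edge from a level $u\in\Gamma_n$ to a level $v\in\Gamma_{n+1}$ precisely when, as subsets of~$X$, one has $v\subset u$ (this is well-defined by the refinement property). The lexicographic order on ingoing edges of a vertex $v\in\Gamma_{n+1}$ is inherited from the vertical order inside the column of $\tau_n$: for two floors $u,u'$ of the same column of $\tau_n$ both containing pieces of~$v$, the one closer to the bottom of the column is declared smaller. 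The map $\Phi\colon X\to T(\Gamma)$ sending $x\in X$ to the sequence of floors $(u_n(x))_{n\geq 0}$ containing $x$ is a Borel isomorphism $\bmod\, 0$ between $X$ and $T(\Gamma)$ (well-defined off the shrinking set of points never entering a castle and off the Borel zero set where $\Phi$ fails to be injective), because $\{u_n(x)\}$ separates points $\bmod\,0$ by the generating property $\bigvee_n\mathfrak{B}_n=\mathfrak{B}(X)$.

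Now I would verify the dynamical compatibility. If $x$ is an interior point of its level at \emph{every} stage $n$ (so that $Sx$ lies one floor higher in the column of every castle from some point on), then by construction $\Phi(Sx)$ is the immediate successor of $\Phi(x)$ in the lexicographic order on their common tail class, i.e.\ $\Phi(Sx)=P\,\Phi(x)$. The exceptional set where this fails at infinitely many levels is contained in the intersection of the $S$-images of the top levels of the castles, hence has measure zero since the total complementary measure tends to~$0$; thus $\Phi\circ S=P\circ\Phi$ holds $\mu$-a.e. Setting $\nu=\Phi_*\mu$, one gets that $\nu$ is $P$-invariant and concentrated on the subset $T_0(\Gamma)$ where $P$ is defined, so that $(X,\mu,S)\simeq(T(\Gamma),\nu,P)\bmod 0$, as required.

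The main obstacle is the first step: one must produce Kakutani--Rokhlin castles that are simultaneously (a) monotone refining, so that they really do define a graded graph rather than merely an abstract sequence of partitions, and (b) generating, so that $\Phi$ is injective $\bmod\,0$. These two requirements pull in opposite directions (refining forces the combinatorics to grow rigidly, while generation requires enough freedom to separate a dense family of Borel sets), and reconciling them requires a careful diagonal construction in which, at the step from $\tau_n$ to $\tau_{n+1}$, one first refines every level of $\tau_n$ using the next set of the chosen generating sequence, and only then applies Rokhlin's lemma inside each piece to increase the tower height and shrink the base. Once this construction is arranged, the remaining steps are essentially bookkeeping.
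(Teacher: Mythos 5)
The overall strategy -- realize $S$ by a refining, generating sequence of Kakutani--Rokhlin castles and turn the combinatorics of the castles into a graded graph on which $S$ becomes the adic successor map -- is exactly the ``globalization of Rokhlin towers'' that the paper describes, and your first step (the diagonal construction reconciling refinement with generation) is the right preparatory lemma. However, the translation of the castles into a graph is wrong, and the error is fatal. You take the vertices of $\Gamma_{n}$ to be the \emph{floors} of the columns of $\tau_n$ and join $u\in\Gamma_n$ to $v\in\Gamma_{n+1}$ when $v\subset u$. But by the refinement property you yourself impose (each level of $\tau_n$ is a union of levels of $\tau_{n+1}$), every floor of $\tau_{n+1}$ is contained in \emph{exactly one} floor of $\tau_n$; hence every vertex of your graph has a unique ingoing edge, $\Gamma$ is a tree, $\dim(v)=1$ for all $v$, two infinite paths that agree from level $n$ on agree everywhere, the tail equivalence classes are singletons, and the adic transformation $P$ is the identity. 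The intertwining $\Phi\circ S=P\circ\Phi$ then forces $S=\mathrm{Id}$ a.e., contradicting ergodicity on a continuous measure. Note also the internal inconsistency: your description of the order on ingoing edges speaks of two floors $u,u'$ ``both containing pieces of $v$,'' which cannot happen under your refinement hypothesis.

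The correct combinatorial dictionary is: the vertices of $\Gamma_n$ are the \emph{columns} of $\tau_n$, and the edges from a column $u$ of $\tau_n$ into a column $v$ of $\tau_{n+1}$ record the successive complete passes of the tower $v$ through the tower $u$ (one edge per pass, ordered by the time of the pass -- this is where the lexicographic structure comes from, and in general it produces multiple edges, which must either be allowed or removed by splitting vertices, since the paper's convention assumes simple edges). With this dictionary the finite paths from $\emptyset$ to a column $v$ are in bijection with the floors of $v$, so $\dim(v)=h_v$; the map $\Phi$ sends $x$ to the path recording, for each $n$, the column of $\tau_n$ containing $x$ together with the floor on which $x$ sits, and moving up one floor is precisely passing to the lexicographic successor. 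Your floors reappear not as vertices but as finite paths, which is why collapsing them into vertices destroys the dynamics. Everything else in your outline (the measure-zero exceptional set of points eventually on top floors, $\nu=\Phi_*\mu$, the generation argument for injectivity of $\Phi$) goes through once the graph is built correctly.
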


Related facts can be found in \cite{Pims}. See also several papers which follow the idea of the adic transformation as a transformation of a Cantor space: \cite{Gi} and subsequent papers by the same authors.

 This means that an adic realization of an action of $\Bbb Z$  gives another (as compared with so-called symbolic dynamics) universal model for the dynamics of the group $\Bbb Z$. This approach to dynamics is nothing more than its realization as a sequence of successive periodic approximations which, in a sense, exhaust the automorphism.  The classical Rokhlin lemma about periodic approximations gives a universal periodic approximation of an aperiodic automorphism, but it  provides no information on the measure-theoretical type of the automorphism. Moreover, it shows that there is no finite invariants of aperiodic automorphisms.  An adic realization puts a single Rokhlin tower (not of constant height, in general) into a comprehensive sequence of towers. One may say that we globalize the set of Rokhlin towers.

It is important that an adic realization of a free (aperiodic) action of the group $\Bbb Z$ brings to each orbit an additional structure, namely, the hierarchy that is the restriction of the tail filtration to the orbit. More exactly, for each point (which is a path) $x$, on its orbit $O(x)\sim \Bbb Z$ we have the sequence of partitions (hierarchy) $\xi_n\bigcap O(x)$, and the asymptotic behavior of these partitions gives an important invariant of the automorphism. One can generalize this consideration to actions of amenable groups.

 Of course, the properties of an adic transformation strongly depend on the adic structure --- the linear ordering of the paths.
 For example, the dyadic odometer and the Morse automorphism have realizations on the same dyadic graph, but with  different orderings.

\subsubsection{An example: the Pascal automorphism}

The new point, which appeared in the papers \cite{V81,V82}, was to define adic automorphisms for distinguished graphs. It turns out that this provides a new source of interesting problems in dynamics and ergodic theory.
The simplest nontrivial example  \cite{V82} was the Pascal automorphism\footnote{It is very interesting that this automorphism (without any connection to the Pascal graph, as well as without a name), for a completely different reason, appeared in a paper by S.~Kakutani (see \cite{Kak,Haj,V15}).} $P$ (see Figure~4), which is the adic transformation of paths of the infinite Pascal triangle with the natural lexicographic ordering
(see \cite{2013}).
 Since the path space of the Pascal triangle is
$\prod \{0;1\}$, we can compare the orbit partition of $P$ with that of
 the simplest ergodic automorphism, odometer, which is the  transformation $x\mapsto x+1$ in the compact additive group ${\mathbb Z}_2\cong \prod \{0;1\}$ of dyadic integers.
Clearly, the orbit partition of the Pascal automorphism is finer  than  that of the odometer  and coincides with the orbit partition of the natural action of the infinite symmetric group which permutes coordinates in the product space.

In spite of the simplicity of its definition, the Pascal automorphism
has very interesting and even mysterious properties, see \cite{Mel,Jan,Lod}. For example,
in \cite{Del} a theorem on the Takagi ``bridge" (similar to a Wiener process bridge) was proved, which uses the remarkable Takagi function (see Figure 5).
The main question was about the spectrum of the unitary operator in $L^2({\mathbb Z}_2)$
corresponding to the Pascal automorphism.
Up to now, there is no doubt that this spectrum is pure continuous (and so $P$  is weakly mixing),  this was claimed as a hope in \cite{2013}, but a precise proof is still absent.

\begin{figure}[h!]
\center{\includegraphics[scale=1]{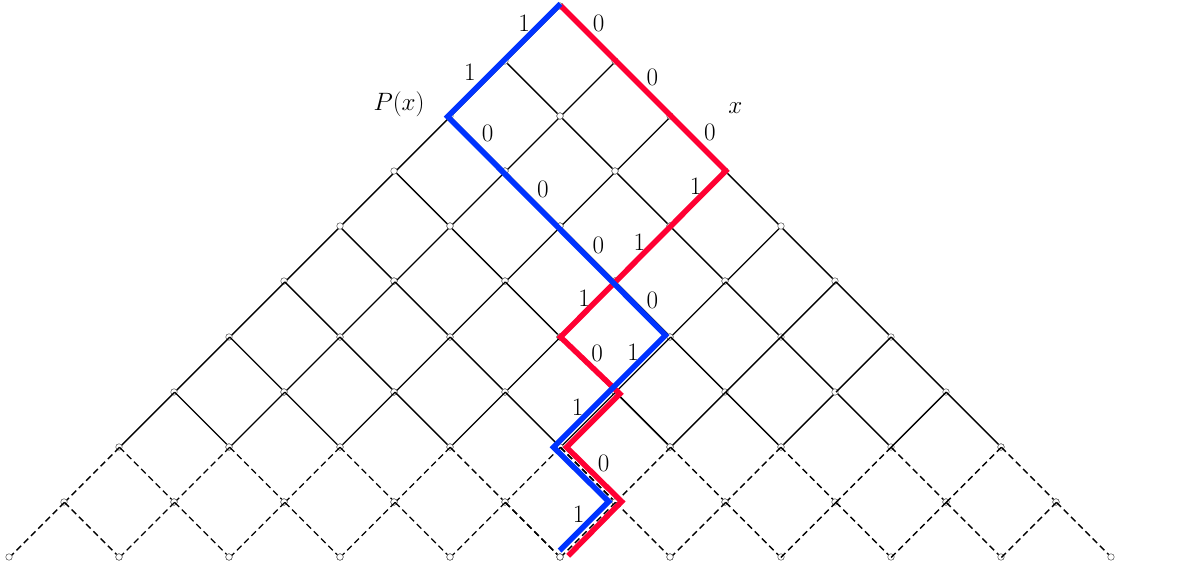}}
\begin{equation} x\mapsto Px; \ \ P(0^{m-s}1^s\textbf{10}\dots)=1^s0^{m-s}\textbf{01}\dots
\label{eq:VershikDef}
\end{equation}\caption{The Pascal automorphism
(Pascal (17th century), Kakutani (1976), Vershik (1981)).}
\label{fig:Pascal}
\end{figure}

%%%%%%%%%%%%%%%%%%%%%%%%%%%%%%%%%%%%%%%%%%%%%%%

\begin{figure}[h!]
\center{\includegraphics[scale=0.4]{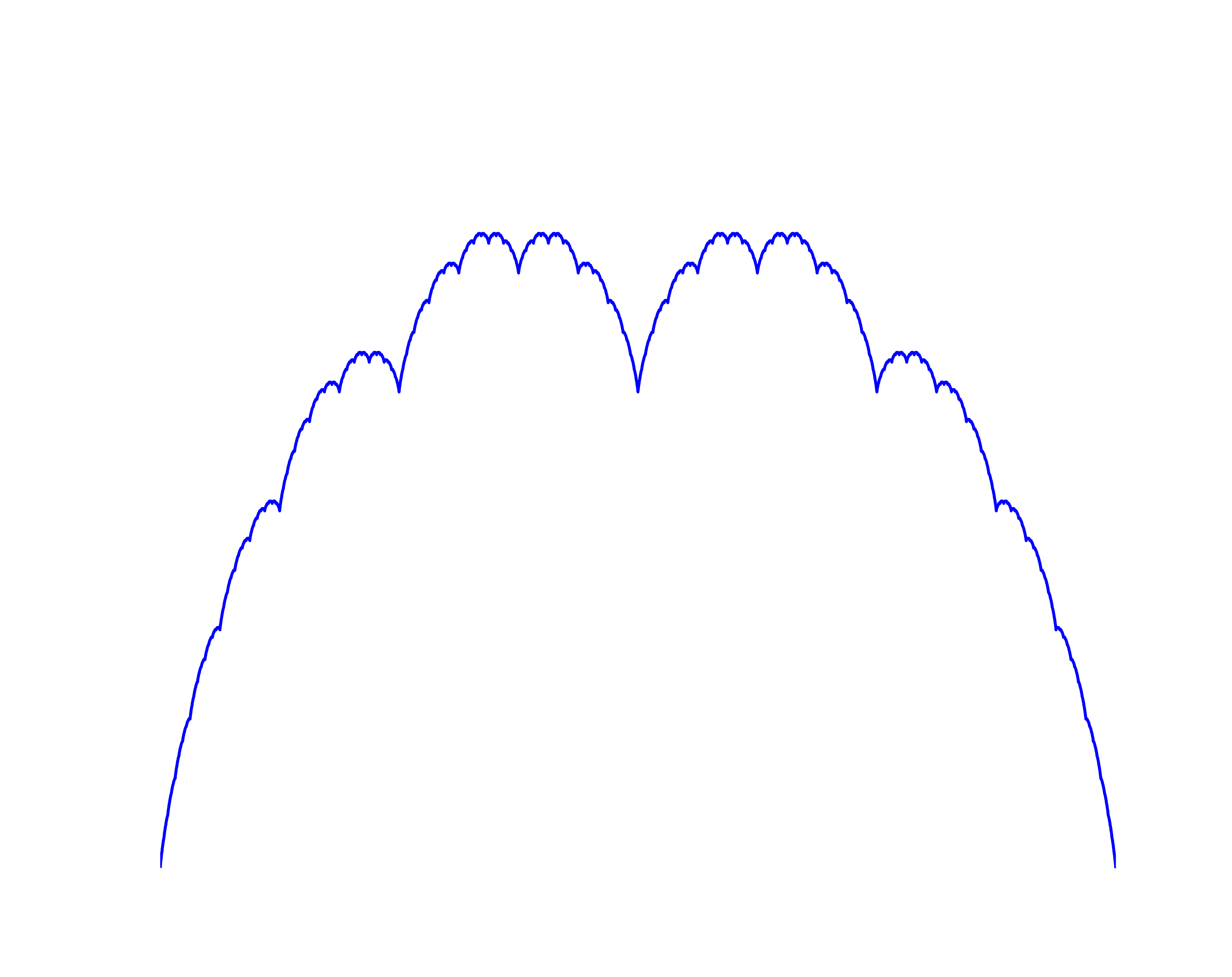}}
\caption{The Takagi curve (1903); the Pascal bridge (2005,
\cite{Jan}).}
\label{fig:TakagiCurve}
\end{figure}
%%%%%%%%%%%%%%%%%%%%%%%%%%%%%%%%%%%%%%%%%%%%%%%

\subsubsection{Adic actions on the space $T(\Gamma)$; the graphs of unordered and ordered pairs}

Adic realizations can be defined for any amenable group. But first we must give an abstract definition of an adic transformation. In general, this transformation is {\it partial}, which means that it is defined not on the whole space~$T(\Gamma)$.

\begin{definition}
A partial transformation $P$ of the path space $T(\Gamma)$ is called  an adic transformation if it preserves
 all classes of the tail equivalence relation, or if it sends each path to an equivalent path. All adic transformations in the sense of the previous section are, by definition, adic in the new sense, too.
\end{definition}

The group (or semigroup) of adic transformations is a subgroup
(or sub-semigroup) of the group of all Borel transformations of the Cantor-like compactum. It is clear that there is a natural approximation of such a transformation with cylinder transformations (see Section~2.2.4). It is a useful
question how to describe this group.

Using our results from filtration theory and the Connes--Feldman--Weiss theorem (\cite{CFW}) on the hyperfiniteness of actions of any amenable group together with some combinatorial arguments, one can prove the following generalization of the theorem on adic models of automorphisms.

\begin{theorem}
For any action $\tau$ of an amenable countable group $G$ on a separable Borel space $X$ there exist a graded graph $\Gamma$ and a Borel isomorphism $\Phi:X \rightarrow T(\Gamma)$ such that for every $g\in G$ the transformation $\Phi \tau(g) \Phi^{-1}$
is an adic transformation on $T(\Gamma)$.
\end{theorem}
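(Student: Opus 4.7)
The plan is to bootstrap the theorem from two inputs available in the text: the Connes--Feldman--Weiss theorem, which furnishes hyperfiniteness of the orbit equivalence relation $E_G = \{(x, \tau(g)x) : x\in X,\, g\in G\}$ in the measure-theoretic setting (and, in its Borel refinement, in our setting), and the Proposition in Section 2.2.4 realizing any Borel space equipped with a hyperfinite equivalence relation as a path space $T(\Gamma)$ with its tail equivalence. Since the notion of an adic transformation in Section 2.3.3 is very liberal (any Borel transformation of $T(\Gamma)$ that preserves tail classes), the whole argument will reduce to identifying $\tau(g)$ with an orbit-preserving transformation under a good realization.

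First I would argue that $E_G$ is a Borel hyperfinite equivalence relation on $X$: by amenability of $G$ and the Connes--Feldman--Weiss theorem, one obtains an increasing sequence $F_1 \subset F_2 \subset \cdots$ of Borel equivalence subrelations with finite classes whose union is $E_G$. Next, I would invoke the Proposition of Section 2.2.4: there exists a graded graph $\Gamma$ and a Borel isomorphism $\Phi: X \to T(\Gamma)$ sending $E_G$ onto the tail equivalence relation $\tau_\Gamma$ and, crucially, sending the chosen approximation $\{F_n\}$ onto the canonical finite approximation $\{\tau_\Gamma^n\}$ (paths coinciding from level $n$ on). This last point requires constructing the graph so that level $n$ corresponds precisely to $F_n$-classes, with edges recording the inclusion $F_n \subset F_{n+1}$; the vertex $v \in \Gamma_n$ containing a point $x$ encodes the $F_n$-class of $x$, and the finite paths from $\emptyset$ to $v$ parameterize that class.

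Once $\Phi$ is in place, each $\tau(g)$ acts within $E_G$-classes by construction, so $\Phi \tau(g) \Phi^{-1}$ is a Borel transformation of $T(\Gamma)$ mapping every path to a tail-equivalent one. By the definition recalled in Section 2.3.3, this is exactly what it means to be an adic transformation (in the generalized sense, not requiring any lexicographic order). Therefore $\{\Phi \tau(g) \Phi^{-1} : g \in G\}$ is a group of adic transformations on $T(\Gamma)$, which is the required conclusion.

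The main obstacle is the Borel (rather than measure-theoretic) strengthening needed in the first step, combined with the coherent matching in the second. Classical Connes--Feldman--Weiss is measure-theoretic; the Borel version for amenable group actions is the deep content, and one must be careful that the approximating finite subrelations $F_n$ can be chosen with enough uniformity so that the graph $\Gamma$ produced by the Proposition has locally finite levels (each $F_n$-class is finite, but one also needs that levels themselves are at most countable with locally finite branching). This is the ``combinatorial argument'' the author alludes to: given the $F_n$, index the $F_n$-classes by a countable set $\Gamma_n$ and set an edge $u \to v$ whenever the $F_n$-class labelled by $u$ is contained in the $F_{n+1}$-class labelled by $v$; local finiteness then follows because each $F_{n+1}$-class is finite and therefore meets only finitely many $F_n$-classes. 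Once this bookkeeping is in place, the identification $\Phi$ and the verification that each $\tau(g)$ becomes adic are essentially automatic.
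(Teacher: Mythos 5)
Your proposal follows essentially the same route as the paper's own (very brief) proof sketch: invoke Connes--Feldman--Weiss to get hyperfiniteness of the orbit equivalence relation, realize the resulting hyperfinite filtration as the tail filtration of a graded graph via the Proposition of Section~2.2.4, and observe that orbit-preserving Borel maps then become adic in the generalized sense of Section~2.3.3. The paper itself defers the details (including the Borel, as opposed to measure-theoretic, refinement of hyperfiniteness that you correctly flag as the delicate point) to a forthcoming article, so your write-up is, if anything, slightly more explicit than the source.
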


For a proof, it suffices to prove that every hyperfinite filtration can be realized as the tail filtration of some graded graph and apply the theorem (\cite{CFW}) on the existence of Rokhlin towers or equivalent facts. This is a generalization of Theorem~1 (see \cite{V81,V82}), but the latter was proved by an explicit construction. Some new details will be given in the new article by the author which was mentioned in the Introduction.

 This theorem gives a globalization of  semihomogeneous Rokhlin towers for actions of a given amenable group with invariant measures. It means that an adic isomorphic realization of an action of an amenable group $G$ with an invariant measure on the space of paths of a graded graph produces
 a sequence of approximations of this action
by actions of a sequence of  finite groups $G_n$, $n=1,2, \dots$; the length of the orbits of the action of the group~$G_n$ can be nonconstant (in contrast to the Rokhlin lemma).

One of the conclusions of this theorem is as follows: the description of  the set of invariant measures  for an  action of an amenable group $G$ on a compact metric space can be reduced to the problem of describing the central measures for some graded graph.

The same is true for quasi-invariant measures with a given
cocycle on the orbit equivalence relation; in this case, we must consider an equipped graded graph with given cotransition probabilities. Note that, by a theorem from \cite{Schm78,Schm}, for an action of a countable group on a standard Borel space, there is a Borel universal measurable set that has measure~1 for all  $G$-quasi-invariant probability Borel measures with a given Radon--Nikodym density (in our terms, with a given cocycle). This means that the theorem can be extended to the case of quasi-invariant measures for amenable groups using equipped graded graphs. Of course, the choice of a graph in the theorem is not unique.

Now we consider {\it universal adic realizations} of actions of a group. Let $G$ be a countable amenable group; assume that we fix a class of actions of $G$ with invariant or quasi-invariant measures; a typical example  is the class of actions that have a generator\footnote{A finite or countable partition $\xi$ of a space $(X,\mu)$ is called a generator of an action of a group $G$ if the product of the shifts of  $\xi$ coincides  $\mu$-$\bmod 0$ with the partition $\epsilon$ of   $(X,\mu)$ into singletons: $\bigvee_{g \in G} g\xi = \epsilon$.  If the number of blocks in $\xi$
is at most $n$, we say that the action has an $n$-generator.} with the number of parts at most $n$.

\begin{definition}
 A graded graph $\Gamma$ with a fixed adic structure
is called universal for a class of actions of the group $G$
with invariant (respectively, quasi-invariant measure) if an arbitrary action from this class is metrically isomorphic to the adic action of the group $G$ on the path space $T(\Gamma)$
with some central (respectively, $\Lambda$-) measure.
\end{definition}

A universal graph for a given class of actions plays the same role for a given group as a symbolic version of actions of  groups. For example, all measure-preserving actions with 2-generators can be realized as (left or right) shifts in the space $2^G$. We give an analog of this fact for adic actions of the  groups $\Bbb Z$ and $\sum_n {\Bbb Z}_2$.

  We introduce two remarkable graded graphs which play an
  important role in this theory.  These are the graphs of ordered ($OP$) and unordered ($UP$) pairs; in a similar way we could consider the graphs of ordered and unordered $k$-tuples, but here we
  will briefly analyze the case of pairs ($k=2$).

 The graph $OP$ of ordered pairs and the graph $UP$ of unordered pairs  (see Figure~\ref{fig:UnPGraph}) are constructed as follows:

 ($\emptyset)$ The initial vertex  is $\emptyset$.

 (1) The first level consists of two vertices $0$ and $1$; they are joined by edges with the vertex $\emptyset$.

 ($n$) The vertices of the $(n+1)$th level  are all ordered (unordered) pairs of vertices of the $n$th level; an
edge between the $n$th and $(n+1)$th levels corresponds to an inclusion of a vertex of the  $n$th level into a pair  which is a vertex of the $(n+1)$th level.\footnote{It is convenient to use  multi-edges (with multiplicity 2) for pairs of the type $(v,v)$ in the graph $UP$, see Figure~\ref{fig:UnPGraph}.}

In order to equip the graphs $OP$ and $UP$ with an adic structure, it suffices to define by induction the order on the set of  pairs.  Assume that we have defined an order on the first level (say, $0<1$) and  on the $n$th level.
Then an order on the $(n+1)$th level is defined as follows. In the case of the graph $OP$ of ordered pairs, we put $(a,b)>_{n+1}(c,d)$ if $a>_n c$
or if $ a=c$ and $b>d$. For the graph $UP$, we put $(a,b)>_{n+1}(c,d)$ if $\max_n(a,b)>_n\max_n(c,d)$, where $\max_n$ means the maximum with respect to $>_n$, or $\max_n(a,b)=\max_n(c,d), \min_n(a,b)>\min(c,d)$.

%%%%%%%%%%%%%%%%%%%%%%%%%%%%%%% UNP GRAPh %%%%%%%%
\begin{figure}[h!]
\center{\includegraphics[scale=1.0]{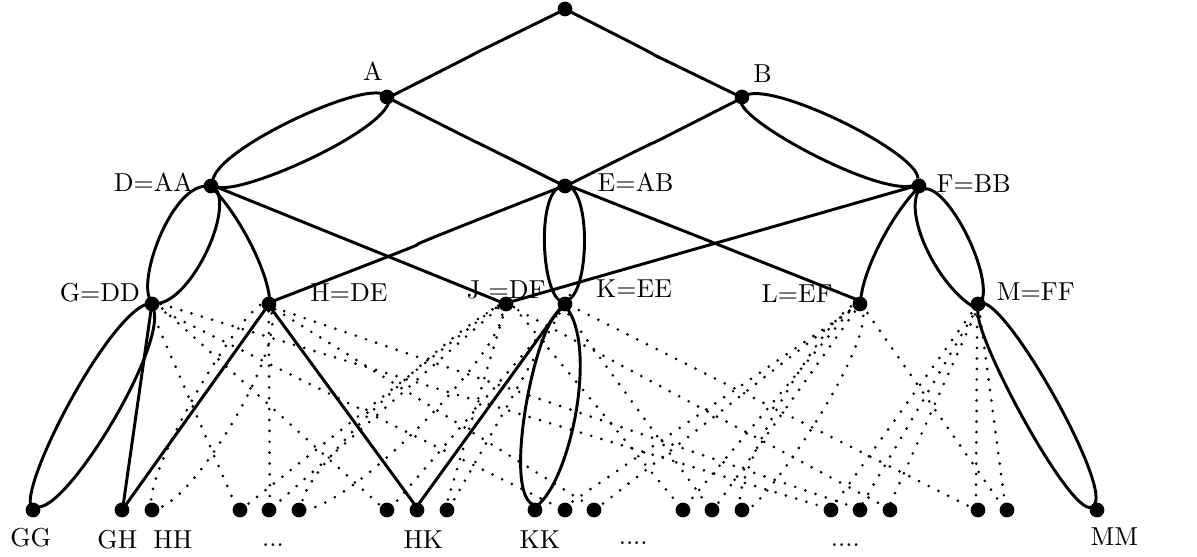}}
\caption{The graph of unordered pairs.}
\label{fig:UnPGraph}
\end{figure}
%%%%%%%%%%%%%%%%%%%%%%%%%%%%%%%%%%
\begin{theorem}
 Both graphs, the graph of ordered pairs $OP$ and the graph of unordered pairs $UP$, are universal for all actions with $2$-generators for the groups $\Bbb Z$ and $\sum_n {\Bbb Z}_2$. In a similar way one can construct universal graphs for generators with a given number of elements.
\end{theorem}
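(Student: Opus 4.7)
The strategy is to combine the general adic realization theorem (Theorem 2) with an explicit combinatorial encoding of arbitrary graded graphs into $OP$ and $UP$. Let $G \in \{\Bbb Z, \sum_n \Bbb Z_2\}$ and let $\tau:G\curvearrowright (X,\mu)$ be an action admitting a 2-generator $\xi=\{\xi_0,\xi_1\}$ with invariant (or quasi-invariant) measure.

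First, I apply Theorem 2 to realize $\tau$ adically as $(X,\mu,\tau)\simeq (T(\Gamma'),\nu,P_{\Gamma'})$ for some graded graph $\Gamma'$ with a central (resp. $\Lambda$-) measure $\nu$. Since the construction behind Theorem 2 builds $\Gamma'$ from a refining sequence of Rokhlin-tower partitions approximating $\tau$, I may take the first partition to be $\xi$ itself, which forces $|\Gamma'_1|=2$. The 2-generator hypothesis is exactly what guarantees that this choice indeed generates the full filtration.

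Second, I construct a Borel map $\Phi:T(\Gamma')\to T(UP)$ (respectively $T(OP)$ when $G=\Bbb Z$) by induction on the level. The two vertices of $\Gamma'_1$ are labeled by $\{0,1\}=UP_1=OP_1$. Given a labeling $\ell_n:\Gamma'_n\to UP_n$ (or $OP_n$), every vertex $w\in\Gamma'_{n+1}$ has its set of finite paths from $\emptyset$, which the adic order on $\Gamma'$ organizes hierarchically into a balanced binary tree of pairs-of-pairs with leaves in $\Gamma'_1$; applying $\ell_n$ and reading off this pair-of-pairs structure yields an unambiguous image in $UP_{n+1}$ (keeping the pair ordered for $OP_{n+1}$). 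Extended to paths, $\Phi$ respects the tail equivalence and intertwines the adic transformations, because the inductive definitions of the orderings on $UP$ and $OP$ given in the text were precisely engineered to reproduce this iterated pair hierarchy.

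Finally, the pushforward $\Phi_*\nu$ is a central (resp. $\Lambda$-) measure on $T(UP)$ or $T(OP)$: centrality is the local condition that conditional measures on finite paths to any vertex be uniform, and this condition is preserved because $\Phi$ maps the paths-to-a-vertex structures bijectively and in a measure-preserving way. The hard part is step two — producing a genuine Borel isomorphism of path spaces that intertwines the adic dynamics. One has to verify that the combinatorial encoding is faithful on tail classes and that varying branching patterns $|\Gamma'_{n+1}|$ can always be absorbed into the canonical pair-of-pair architecture of $UP$ or $OP$. The dichotomy between the two graphs reflects the underlying algebraic asymmetry: a lexicographic linear order is the natural refinement for a $\Bbb Z$-orbit (hence $OP$), whereas the orbit structure of $\sum_n \Bbb Z_2$ is generated by independent level-wise involutions matching the \emph{unordered} pair structure (hence $UP$). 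Once this encoding is verified, the statement for arbitrary 2-generator actions follows.
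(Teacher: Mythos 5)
There is a genuine gap, and it sits exactly where you flag it yourself: step two does not work as described for an arbitrary graph $\Gamma'$ produced by Theorem~2. Your encoding requires that for every vertex $w\in\Gamma'_{n+1}$ the set of finite paths from $\emptyset$ to $w$ carries a canonical structure of an iterated pair-of-pairs (a balanced binary tree with leaves in $\Gamma'_1$). A general graded graph has no such structure: the number of paths $\dim(w)$ need not be a power of $2$, the tree of paths to $w$ need not be dyadic, and the adic (lexicographic) order on a non-dyadic tree does not ``organize it hierarchically into pairs.'' So the map into $UP_{n+1}$ or $OP_{n+1}$ is simply not defined. To make your scheme run you would first have to replace $\Gamma'$ by a graph whose tail filtration is \emph{dyadic} and admits a dyadic generator coming from the $2$-generator $\xi$ --- and that reduction is the actual content of the theorem, not a preliminary normalization. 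The centrality of $\Phi_*\nu$ is also asserted too quickly: once $\Phi$ identifies distinct vertices of $\Gamma'_n$ with the same vertex of $UP_n$ (which it must, since $|UP_n|$ is fixed while $|\Gamma'_n|$ is arbitrary), uniformity of conditional measures on the fibers of the target is no longer an automatic consequence of uniformity upstairs.

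The paper takes a different and more structural route, which is worth contrasting with yours. For $OP$ it argues directly from the structure of paths of $OP$ (the ordered-pair hierarchy does encode arbitrary $2$-symbol codings of a $\Bbb Z$-orbit, so a direct analysis suffices there). For $UP$ it explicitly warns that the direct approach ``is not so obvious'' and instead derives the result from Theorem~4: the tail filtration of $(T(UP),\nu)$ is universal for ergodic dyadic filtrations with a dyadic generator. That theorem is proved via the interpretation of $UP$ as a tower of dyadic measures together with the universal projector from filtration theory --- tools that handle precisely the non-canonicity your pairs-of-pairs encoding stumbles on. The $2$-generator hypothesis enters there as the dyadic generator of the orbit filtration, not as a device to force $|\Gamma'_1|=2$. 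If you want to salvage your argument, you should first prove (or cite) that the orbit filtration of a $2$-generator action of $\Bbb Z$ or $\sum_n\Bbb Z_2$ is isomorphic to an ergodic dyadic filtration with dyadic generator, and then invoke the $UP$-universality for such filtrations; the direct combinatorial coding from an arbitrary adic model cannot be completed as written.
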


This fact for the graph $OP$ follows from the analysis of the structure of paths of $OP$. For the graph $UP$, it is not so obvious. The proof in that case uses an important theorem on filtrations which we will discuss later, but formulate here.

\begin{remark}{\rm
1) The vertices of  level $n$ of the graph $UP$
canonically correspond to the orbits of the action
of the group  $\Aut T_n$ of all automorphisms of the dyadic tree
$T_n\sim \{2^n\}$ on the space $2^{T_n}$.

2) The graph $UP$ has another important interpretation: it is a tower of dyadic measures, the set of vertices of level $n$ being the set of probability measures on the vertices of level $n-1$ with possible values $0, 1/2, 1 $; for the corresponding picture of a beginning of the inverse limit of simplices, see Figure~\ref{fig:UnPGraph}.}
\end{remark}

The proof of the universality theorem for the graph $UP$
is based on the universality of this graph for dyadic filtrations, see Section~4.

\begin{theorem}
The tail filtration of the space $T(UP)$ for the graph $UP$ of unordered pairs is universal with respect to dyadic filtrations
in the following sense: every ergodic dyadic filtration of a
standard measure space $(X,\mu)$ with dyadic generator \footnote{The notion of a finite generator for a filtration is the same as for an action of a group: this is a finite measurable partition $\xi$ such that the product $\bigvee_{g}g\xi$ is the partition into singletons,  where $g$ runs over the group of all automorphisms for which all $\sigma$-algebras of the filtration are invariant.} is isomorphic to the tail filtration $(T(UP),\nu)$ with some central measure $\nu$.
\end{theorem}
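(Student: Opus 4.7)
The plan is to exhibit, for any ergodic dyadic filtration $(X,\mu,\mathfrak{A}_\bullet)$ with dyadic generator $\xi$, a mod-$0$ isomorphism $\Phi: X \to T(UP)$ of filtered measure spaces pushing $\mu$ forward to a central measure $\nu$ on $T(UP)$. The map $\Phi$ encodes each $x$ by the sequence of ``unordered-pair types'' recording the local structure of the filtration up to the tree-automorphism symmetry intrinsic to $UP$.

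\textbf{Construction of $\Phi$.} I would define maps $v_n:X\to UP_n$ inductively. Set $v_1(x):=\xi(x)\in\{0,1\}=UP_1$. Assuming $v_n$ is $\mathfrak{A}_n$-measurable, for each $\mathfrak{A}_{n+1}$-atom $A$ the dyadic property splits $A$ into two $\mathfrak{A}_n$-subatoms $A',A''$ of conditional measure $1/2$ each; on each subatom, $v_n$ is constant, yielding values $v_n(A'),v_n(A'')\in UP_n$. Define
\[
v_{n+1}(x):=\{v_n(A'),\,v_n(A'')\}\in UP_{n+1}
\]
as an \emph{unordered} pair (there is no canonical way to label the two subatoms, and this is precisely where the unordered-pair structure of $UP$ enters). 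This is $\mathfrak{A}_{n+1}$-measurable by construction, and I set $\Phi(x):=(v_1(x),v_2(x),\ldots)$.

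\textbf{Structural checks.} By construction $v_n(x)$ is one of the two members of the unordered pair $v_{n+1}(x)$, so consecutive vertices are joined by an edge in $UP$ and $\Phi(x)\in T(UP)$. An easy induction gives $\dim(w)=2^n$ for every $w\in UP_n$, so the canonical cotransition probabilities of $UP$ are all $(1/2,1/2)$; these coincide with the cotransition probabilities of $\Phi_*\mu$ produced by the dyadic splitting property, whence $\nu:=\Phi_*\mu$ is central. The inclusion $\Phi^{-1}(\mathfrak{A}_n^{T(UP)})\subset\mathfrak{A}_n$ mod $0$ is immediate from the $\mathfrak{A}_m$-measurability of $v_m$ for $m\geq n$.

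\textbf{The main obstacle.} The heart of the proof is the reverse inclusion $\mathfrak{A}_n\subset\Phi^{-1}(\mathfrak{A}_n^{T(UP)})$ mod $0$, i.e.\ that the sequence $(v_n,v_{n+1},\ldots)$ already determines the $\mathfrak{A}_n$-atom of $x$ mod $0$. Equivalently, $\Phi$ must be injective on the filtered measure space, not just a morphism. This is where the dyadic generator hypothesis is used essentially: by $\bigvee_g g\xi=\epsilon$ together with ergodicity, the invariants $v_n$ capture all information about the $\mathfrak{A}_n$-atom that is invariant under filtration-preserving automorphisms, since the type $v_n(x)$ is precisely the full labeled binary tree of depth $n$ beneath the atom with $\xi$-values at the leaves, taken modulo $\Aut(T_n)$. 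To close the argument rigorously I would invoke the general standardness criterion of Section~4, applied to the model filtration on $(T(UP),\nu)$, and transport it back through $\Phi$; the delicate point is to verify standardness ``from below'' using the combinatorial tower-of-unordered-pairs description of $UP$ (Remark after the statement), which is what makes the $UP$-realization canonical rather than merely existent in the style of Theorem~2. This standardness verification is the principal technical hurdle, and is where I expect the bulk of the work to lie.
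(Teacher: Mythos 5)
Your construction is the one the paper has in mind: the paper gives no detailed proof (it is deferred to a forthcoming article), but its stated mechanism --- ``the interpretation of $UP$ as a tower of measures and the so-called universal projector in the theory of filtrations'' --- is exactly your iterated passage from a function to the unordered pair (i.e.\ the conditional distribution with values in $\{0,\tfrac12,1\}$) of its values on the two subatoms. Your structural checks (all cotransition probabilities of $UP$ equal $\tfrac12$, hence the push-forward of $\mu$ is central; the inclusion $\Phi^{-1}(\mathfrak{A}_n^{T(UP)})\subset\mathfrak{A}_n$) are correct, modulo an index shift ($\xi$ is only $\mathfrak{A}_0$-measurable, so it should seed level $1$, and the induction starts one step later) and the need to record \emph{edges} rather than vertices at the doubled vertices $\{a,a\}$.

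The gap is at the step you yourself flag as the main obstacle, and it is worse than you suggest: with an arbitrary dyadic generator $\xi$ the map $\Phi$ can fail to be injective outright. Take the Bernoulli dyadic filtration on $\prod_1^\infty\{0,1\}$, with $\mathfrak{A}_n$ generated by the coordinates beyond $n$, and let $\xi$ be the first-coordinate partition; this $\xi$ satisfies $\bigvee_g g\xi=\epsilon$ (conjugating by the filtration-preserving maps $x_1\mapsto x_1\oplus a(x_2,x_3,\dots)$ already recovers all coordinates), so it is a dyadic generator in the sense of the footnote. But every $\xi_n$-atom carries the \emph{same} labeled binary tree up to $\Aut(T_n)$ (each bottom-level cell has one leaf labeled $0$ and one labeled $1$), so your $v_n$ is a.e.\ constant for every $n\geq1$ and $\Phi_*\mu$ sits on two paths. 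The hypothesis $\bigvee_g g\xi=\epsilon$ concerns the orbit of $\xi$ under the automorphism group; it does not imply that the $\Aut(T_n)$-orbit invariants $v_n$ separate points, and the real work of the proof is to manufacture an initial function whose iterated conditional distributions do separate the two subatoms at (almost) every split, plus the limit-exchange argument needed to pass from $\mathfrak{A}_n=\mathfrak{A}_m\vee\sigma(v_n,\dots,v_{m-1})$ to $\mathfrak{A}_n=\sigma(v_n,v_{n+1},\dots)$. Your proposed repair --- invoking the standardness criterion of Section~4 and ``verifying standardness from below'' --- cannot close this gap: the theorem must cover \emph{nonstandard} ergodic dyadic filtrations (the paper insists that $UP$ is not eventually standard and carries nonstandard central measures; universality would be empty otherwise), so standardness of the model filtration is neither available nor relevant to the injectivity of $\Phi$.
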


The proof uses the interpretation of $UP$ as a tower of measures
and the so-called universal projector in the theory of filtrations.
It is very interesting to study the $C^*$-algebras for which $OP$ and $UP$ are the corresponding Bratteli diagrams.

A detailed description of the properties
of the graphs $UP$ and $ OP$ will be given in the forthcoming article.

Universal adic realizations became a source of various
combinatorial constructions of new and paradoxical actions of the groups $\Bbb Z$ and $\sum_n {\Bbb Z}_2$. They will be considered elsewhere.

It is an interesting problem to find universal graphs for other groups. The adic realization of actions of amenable groups
(``adic dynamics") is very different from the classical symbolic realization (= actions by (left or right) shifts in the space of functions on the group). We hope that it will give a new class of examples of dynamical systems.

\subsubsection{Strong and weak approximations in ergodic theory}

There are two theories of approximations of automorphisms in ergodic theory. Both theories are based on the fundamental Rokhlin lemma on approximation of automorphisms with periodic automorphisms. The first of them, weak approximation, was very popular in the 1960--70s and gave many concrete results; it used Rokhlin towers for which the corresponding periodic automorphisms converge in the sense of the weak topology on the group of automorphisms. For details, see \cite{Erg,Kat} and references therein.

At the same time (the 1970s), another kind of approximation, based on the uniform convergence of automorphisms, was developed by the author. In this case, additionally, the orbit partition of the approximation is finer than the orbit partition of the group action;  in other words,
 this is an approximation in the sense of the (nonseparable) metric on the group of measure-preserving automorphisms given by
$$ d(T,S)=\mbox{meas}\{x:Tx\ne Sx\}.
$$
If a periodic automorphism $S_n$ is close in the sense of this metric to a given automorphism $T$, then the orbit partition of $S$ is, up to a set of small measure, a subpartition of the orbit partition of $T$.
A monotonic approximation of $T$ by periodic automorphisms $S_n$
(or a coherent family of Rokhlin towers) defines a filtration: the sequence of the orbit partitions of $S_n$, whose tail partition is just the orbit partition of $T$. Of course,  in order to obtain invariant properties
of automorphisms, the convergence to zero of the distance between $S_n$ and $T$ must be complemented with additional conditions. The main source of such conditions is the theory of filtrations. Thus, our concept of strong approximation (or {\it globalization of Rokhlin towers}) leads to additional structures on the orbits of the automorphism, so-called ``hierarchies," which are merely the restrictions of the filtration to the orbits. One of the principal notions that came from the theory of filtrations is the notion of {\it standardness}, see Section~4.  This notion,  for the special homogeneous
case, appeared in the 1970s in my theory  called at that time the ``theory of decreasing sequences of measurable partitions" (this was the name of filtrations at that time, see \cite{V73a,V73,V94}). Now this theory is combined with a more general theory of filtrations on  paths of graded graphs and the theory of central measures.

 Thus, an adic approximation is nothing more than a globalization of Rokhlin towers; a graded graph appears naturally from a filtration, and vice versa: a filtration can be realized as the tail filtration of the path space of a graph. In contrast to realizations of automorphisms in symbolic dynamic (as shifts in the space $A^{\Bbb Z}$), an adic realization of an automorphism is very similar to a periodic or to a local transformation.

The same ``adic'' realization of a group action can be constructed for an arbitrary amenable group\footnote{And maybe also for nonamenable groups.}. For this, we must find a decreasing sequence of measurable partitions with finite blocks whose intersection  is  the orbit partition.  This can be done due to a theorem from \cite{CFW}. We return to this question in the forthcoming article.

\newpage

\section{A geometric approach to the asymptotics of the space of paths and measures. Standardness and limit shape theorems}

\subsection{Projective limits of simplices of measures}

We consider the problem of describing the invariant (central) measures from a geometric point of view.

 \medskip

   Consider a Markov compactum  $\cal X$  which is the space $T(\Gamma)$ of  paths on a graded graph $\Gamma$; the set  $\mbox{Meas}(\cal X)$ of all Borel probability measures on $\cal X$ is an affine compact (in the weak topology) simplex
   (Chouqet simplex), whose extreme points are $\delta$-measures.
Since $\cal X$ is an inverse (projective) limit of finite spaces (namely, the spaces of finite paths), it obviously follows that
$\mbox{Meas}(\cal X)$ is also an inverse limit of finite-dimensional simplices
 $\hat \Sigma_n$, where $\hat \Sigma_n$ is the set of formal convex combinations of finite paths (or just the set of probability measures on these paths) leading from the initial vertex to vertices of level $n$, $n=1,2, \dots $, and the projections $\hat\pi_n: \hat\Sigma_n \to \hat \Sigma_{n-1}$ correspond to ``forgetting'' the last vertex of a path. Every measure is determined by its finite-dimensional projections to cylinder sets (i.e., is a so-called cylinder measure). We will be interested only in invariant (central) measures, which form a subset of $\mbox{Meas}(\cal X)$.
Recall the definition which was given earlier for the special case of the path space  $T(\Gamma)$.
We repeat the definition of a central measure in slightly different terms.
\begin{definition}
A Borel probability measure $\mu$ on a Markov compactum  is called central if for any vertex of an arbitrary level, the projection of this measure to the subalgebra of cylinder sets of finite paths ending at this vertex is the uniform measure on this (finite) set of paths.
\end{definition}

Other, equivalent, definitions of a central measure
 $\mu \in  \mbox{Meas}(\cal X)$ are as follows.

\medskip
{\bf1.} The conditional measure of $\mu$ obtained by fixing the ``tail'' of infinite paths passing through a given vertex, i.e., the conditional measure of $\mu$ on the elements of the partition  $\xi_n$, is the uniform measure on the initial segments of paths for any vertex.

\medskip
{\bf2.} The measure is invariant under any adic shift (for any choice of orderings on the edges).

\medskip
{\bf3.} The measure is invariant with respect to the tail equivalence relation.
\medskip

The term ``central measure" stems from the fact that in the application to the representation theory of algebras and groups, measures with these properties determine traces on algebras (respectively, characters on groups). In the theory of stationary (homogeneous) topological Markov chains, central measures are called measures of maximal entropy.

The set of central measures on a Markov compactum $\cal X$ (on the path space $T(\Gamma)$ of  a graph $\Gamma$) will be denoted by $\Sigma({\cal X})$ or $\Inv (\Gamma)=\Sigma(\Gamma)$.
Clearly, the central measures form a convex weakly closed subset of the simplex of all measures:
$$\Sigma({\cal X}) \subset \mbox{Meas}(\cal X).$$ The set  $\Sigma({\cal X})$ of central measures is also a simplex, which can be naturally presented as a projective limit of the sequence of the finite-dimensional simplices of convex combinations of uniform measures on the $n$-cofinality classes. In more detail, the following proposition holds.

\begin{statement}
The simplex of central measures can be written in the form
$$\Sigma({\cal X})=\lim_{\leftarrow}(\Sigma_n; p_{n,m}),$$
or
$$ \Sigma_1 \leftarrow \Sigma_2 \leftarrow \dots\leftarrow \Sigma_n\leftarrow \Sigma_{n+1}\leftarrow
\dots  \leftarrow\Sigma_{\infty}\equiv \Sigma({\cal X}),$$
where $\Sigma_n$ is the simplex of formal convex combinations of vertices of the $n$th level
$\Gamma_n$ (i.e., points of $X_n$), and the projection
$p_{n,n-1}: \Sigma_n \to \Sigma_{n-1}$ sends a vertex $\gamma_n \in \Gamma_n$ to the convex combination
$\sum \lambda_{\gamma_n}^{\gamma_{n-1}} \delta_{    \gamma_{n-1}}\in \Sigma_{n-1}$ where the numbers $\lambda_{\gamma_n}^{\gamma_{n-1}}$ are uniquely determined by the condition that $\lambda_{\gamma_n}^{\gamma_{n-1}}$ is proportional to the number of paths leading from
$\emptyset$ to $\gamma_{n-1}$ (which is denoted, as already mentioned, by
$\dim\gamma_{n-1}$).\footnote{In the general (noncentral) case, the coefficients  $\lambda$ are the cotransition probabilities (see above).}  The general form of the projection is $p_{n,m}=\prod_{i=m}^{n+1} p_{i,i-1}$, $m>n$.
\end{statement}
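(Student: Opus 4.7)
The plan is to construct explicit mutually inverse affine continuous maps between $\Sigma({\cal X})$ and the inverse limit $\lim_{\leftarrow}(\Sigma_n;p_{n,m})$; once the bijection and affinity are established, the claim becomes a tautological identification of two Choquet simplices. For the forward map $\Phi$, I would send a central measure $\mu$ to the sequence of its vertex marginals $\mu_n(v):=\mu\{t\in T(\Gamma):v\in t\}$, $v\in\Gamma_n$. The main verification is the compatibility $p_{n,n-1}\mu_n=\mu_{n-1}$: centrality of $\mu$ means that, conditionally on passing through $v$, the initial segment is uniform on the $\dim(v)$ paths from $\emptyset$ to $v$, and exactly $\dim(u)$ of them go through a given predecessor $u\prec v$. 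Hence the conditional probability of visiting $u$ given $v$ equals $\dim(u)/\dim(v)=\lambda_v^u$, and by the law of total probability $\mu_{n-1}(u)=\sum_{v:u\prec v}\lambda_v^u\mu_n(v)$, which is precisely the definition of $p_{n,n-1}$.

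For the inverse map $\Psi$, starting from a compatible sequence $(\mu_n)$, I would define cylinder probabilities
\[
\tilde\mu_n(\gamma):=\mu_n(v)/\dim(v)
\]
for every finite path $\gamma$ of length $n$ ending at $v\in\Gamma_n$. Kolmogorov consistency of this family follows from the coherence relation: summing over all length-$n$ extensions of a path ending at $u\in\Gamma_{n-1}$ gives
\[
\sum_{v:u\prec v}\tilde\mu_n(\gamma\cdot(u,v))=\sum_{v:u\prec v}\mu_n(v)/\dim(v)=\mu_{n-1}(u)/\dim(u)=\tilde\mu_{n-1}(\gamma),
\]
which is exactly the equation $p_{n,n-1}\mu_n=\mu_{n-1}$ divided through by $\dim(u)$. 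Since $T(\Gamma)$ is a compact metrizable projective limit of finite sets, the Kolmogorov extension theorem yields a unique Borel probability $\mu$ realizing these cylinder values; by construction $\tilde\mu_n(\gamma)$ depends only on the endpoint $v$, so the conditional measure of $\mu$ on initial segments ending at $v$ is uniform, i.e., $\mu$ is central.

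The two constructions are mutual inverses because they both encode the same data through the single identity $\mu_n(v)=\dim(v)\tilde\mu_n(\gamma)$ for any $\gamma$ ending at $v$. Affinity of $\Phi$ and $\Psi$ is immediate from linearity of marginals and cylinder evaluations, and continuity in the weak topology is automatic because the cylinder values give a system of defining coordinates on both sides. The main obstacle, in my view, is only the bookkeeping in the compatibility step---keeping straight the two normalizations (vertex marginals versus uniform cylinder weights)---but this reduces to the combinatorial identity $\dim(v)=\sum_{u\prec v}\dim(u)$ combined with uniformity of conditional measures; the Kolmogorov extension step itself is entirely standard.
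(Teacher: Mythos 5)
Your proof is correct and follows essentially the same route as the paper's: both identify a central measure with its compatible sequence of level projections and invoke the fact that a measure on the path space is determined by its cylinder values. You simply spell out in more detail what the paper leaves implicit, namely the explicit verification that the compatibility relation $p_{n,n-1}\mu_n=\mu_{n-1}$ is equivalent to Kolmogorov consistency of the uniform cylinder weights $\mu_n(v)/\dim(v)$.
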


\begin{proof}
The set of all Borel probability measures on the path space is a simplex which is a projective limit of the simplices generated by the spaces of finite paths of length $n$ in the graph, which follows from the fact that the path space itself is a projective limit with the obvious projections of ``forgetting'' the last edge of a path. The space of invariant measures is a weakly closed subset of this simplex, and we will show that it is also a projective limit of simplices (the fact that it is a simplex is well  known). The projection~$\mu_n$ of any invariant measure  $\mu$ to a finite cylinder of level $n$ is a measure invariant under changes of initial segments of paths and hence lies in the simplex defined above; since the projections preserve this invariance, $\{\mu_n\}$ is a point of the projective limit. It remains to observe that a measure is uniquely determined by its projections, which establishes a bijection between the points of the projective limit and the set $\Sigma(\Gamma)$ of invariant measures.
\end{proof}

\begin{figure}[h!]
\hspace*{-1cm}{\includegraphics[scale=0.3]{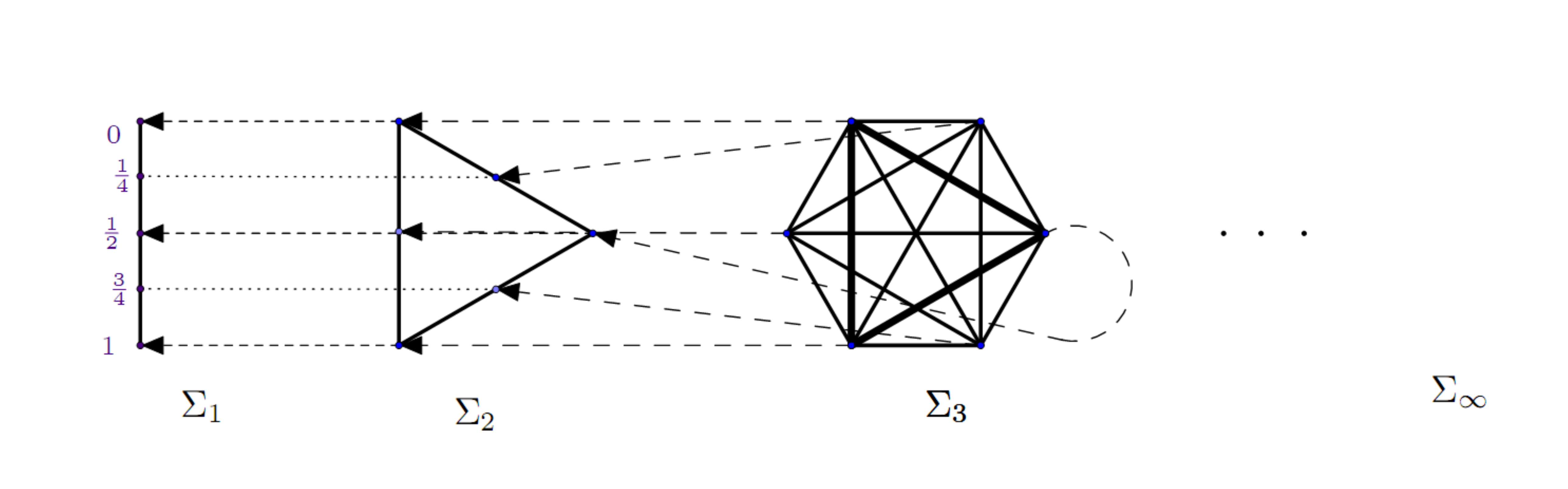}}
\caption{A projective limit of simplices.}
\label{fig:simplices }
\end{figure}

%%%%%%%%%%%%%%%%%%%%%%%%%%%%%%%%%%%%%%%%%%%%%%%%%%%%%%%%%%%%%%%

The fact that the set of  measures invariant with respect to a countable group acting on a compactum, as a subset of the simplex of all measures on the compactum, is an affine simplex
 (Choquet simplex)  can easily be  deduced from the ergodic decomposition of invariant measures; this is well known (see \cite{Fel}). It is less known that the same is true for the set of probability  measures that agree with a cocycle (see Section~1), or that have given cotransition probabilities or given Radon--Nikodym derivatives (for the action of adic transformations). Using the ergodic decomposition for quasi-invariant measures and the above interpretation of a general projective limit of simplices as a set of $\Lambda$-measures, we obtain a natural proof of the following statement.

\begin{statement}
A projective limit of finite-dimensional simplices is a Choquet simplex.
\end{statement}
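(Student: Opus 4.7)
The plan is to follow the hint embedded in the statement itself: realize an abstract projective limit $\Sigma_\infty = \varprojlim (\Sigma_n, p_{n,n-1})$ of finite-dimensional simplices as the set of $\Lambda$-measures on the path space of some equipped graded graph, and then deduce the simplex property from the uniqueness of the ergodic decomposition. The advantage of this route is that, once the identification is made, all the work is done by a theorem that is genuinely an ergodic-theoretic fact, not a convex-analytic one.

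First I would encode the projective system as an equipped graph $(\Gamma,\Lambda)$. Take $\Gamma_n := \operatorname{ext}(\Sigma_n)$ together with an adjoined initial vertex $\emptyset$. Each affine map $p_{n,n-1}$ is determined by its values on the vertices, and the unique barycentric decomposition in the finite-dimensional target gives $p_{n,n-1}(\delta_v) = \sum_{u \in \Gamma_{n-1}} \lambda_v^u \, \delta_u$ with $\lambda_v^u \ge 0$ and $\sum_u \lambda_v^u = 1$. Draw an edge $u \prec v$ precisely when $\lambda_v^u > 0$, and take these numbers as the cotransition structure~$\Lambda$. By the Kolmogorov extension theorem applied to cylinder sets in $T(\Gamma)$, a coherent sequence $(\mu_n)$ (i.e.\ a point of $\Sigma_\infty$) corresponds bijectively to a Markov probability measure $\mu$ on $T(\Gamma)$ whose level-$n$ marginal equals $\mu_n$ and whose cotransitions are $\Lambda$. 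This affine bijection is a homeomorphism for the weak topologies on both sides, hence identifies $\Sigma_\infty$ with $\Sigma(\Gamma)_\Lambda$.

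With this identification in hand, I would invoke the ergodic decomposition for measures that agree with a prescribed cocycle on the tail equivalence relation, as developed in \cite{Schm78,Schm,Fel}: every $\mu \in \Sigma(\Gamma)_\Lambda$ admits a unique decomposition as a barycentric integral over the ergodic members $\operatorname{Erg}(\Gamma)_\Lambda$. Combined with the standard identification of the extreme points of $\Sigma(\Gamma)_\Lambda$ with the ergodic $\Lambda$-measures, this uniqueness is precisely the defining property of a Choquet simplex.

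The main obstacle will be the last step: ensuring the ergodic decomposition applies in the general quasi-invariant setting with a prescribed Markov cocycle, not merely in the invariant case where the result is classical. One must produce a measurable selection of ergodic components for the tail equivalence relation, which is handled by the universal measurable sets for cocycles due to Schmidt. One must also verify that extremality in $\Sigma(\Gamma)_\Lambda$ coincides with tail-ergodicity: if $\mu = \tfrac12 \mu_1 + \tfrac12 \mu_2$ with distinct $\mu_i \in \Sigma(\Gamma)_\Lambda$, a Radon--Nikodym argument produces a nontrivial tail-measurable function witnessing nonergodicity; conversely, a nontrivial tail-invariant set $A$ yields the decomposition $\mu = \mu(A)\,\mu(\,\cdot\mid A) + \mu(A^c)\,\mu(\,\cdot\mid A^c)$ inside $\Sigma(\Gamma)_\Lambda$, since conditioning on a tail-invariant set preserves both the Markov property and the cotransition probabilities. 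Once these two points are secured, the identification of the projective limit with a set whose ergodic decomposition is unique finishes the proof.
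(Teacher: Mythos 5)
Your proposal is correct and follows essentially the same route as the paper: Vershik explicitly says his proof interprets a general projective limit of finite-dimensional simplices as the set of $\Lambda$-measures on the path space of an equipped graded graph and then appeals to the uniqueness of the ergodic decomposition of measures with a given cocycle (citing Schmidt's work on ergodic decomposition of quasi-invariant measures), which is exactly your construction and your key lemma. Your added verifications --- that the identification is an affine homeomorphism and that extremality in $\Sigma(\Gamma)_\Lambda$ coincides with tail-ergodicity --- are details the paper leaves implicit but are the right ones to check.
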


This is a nontrivial fact even if the projective limit is finite-dimensional, see \cite{Wink} and the proof given there.  Our proof, which is based on the uniqueness of an ergodic decomposition of measures with a given cocycle, seems more natural.

Recall that points of the simplex $\Sigma_n$ are probability measures on the points of $X_{n}$ (i.e., on the vertices of the $n$th level $\Gamma_{n}$), and the extreme points of $\Sigma_n$ are exactly these vertices. Note that distinct vertices of the graph correspond to distinct vertices of the simplex.

Extreme points of the simplex $\Sigma(\Gamma)$  of invariant measures on the whole path space $T(\Gamma)$
are indecomposable invariant measures, i.e., measures that cannot be written as nontrivial convex combinations of other invariant measures. Then it follows from the theorem on the decomposition of measures invariant with respect to a hyperfinite equivalence relation into ergodic components that an indecomposable measure is ergodic (= there are no invariant subsets of intermediate measure). It is these measures that are of most interest to us, since the other measures are their convex combinations, possibly continual. The set of ergodic central measures of a Markov compactum
 $\cal X$ (of a graph~$\Gamma$) will be denoted by $\mbox{Erg}(\cal X)$ or $\mbox{Erg}(\Gamma)$.

The problem which we discuss here is about the
description of the set of all central ergodic measures for a given Markov compactum. A meaningful question is the following: for which Markov compactum (or graded graph) the set of ergodic central measures has an analytic description in terms of combinatorial characteristics of this compactum (graph), and what are these characteristics? In what cases such a description does exist? The role of such characteristics can be played by some properties of the sequence of matrices
 $\{M_n\}$ determining the compact (graph), frequencies, spectra, etc.

This problem is similar to the problem of describing unitary factor representations of finite type of discrete locally finite groups, finite traces of some $C^*$-algebras, Dynkin's entrance and exit boundaries; it is very closely related to the problems of finding Martin boundaries, Poisson--Furstenberg boundaries, etc. Since the 1950s, it is  well known that the situation with classification of irreducible representations of groups and algebras can be either ``tame'' (there exists a Borel parametrization) or ``wild'' (such a parametrization does not exist).  By Thoma's theorem \cite{Thoma2}, the classification problem for the irreducible representations of  a countable group is tame only if the group is eventually Abelian (i.e., has a normal Abelian subgroup of finite index).
This also happens, though more rarely, with factor representations. But in many classical situations, the answer is ``tame,'' which is a priori far from obvious.

For example, the characters of the infinite symmetric group, i.e., the invariant measures on the path space of the Young graph (see the next section), have a nice parametrization, and this is a deep result, see Section~6; however, for the graph of unordered pairs (see Figure~\ref{fig:UnPGraph}), there is no nice parametrization,  because of the nonstandardness of the graph. We emphasize that the presentation of  $\Sigma(\Gamma)$ as a projective limit of simplices essentially relies on the approximation, i.e., on the structure of the Markov compactum (graph). Obviously, the answer to the stated question also depends on the approximation. The fact is that we can change the approximation without changing the stock of invariant measures, which is determined only by the tail equivalence relation. The dependence of our answers on the approximation will be discussed later (see the remark on the lacunary isomorphism theorem in the last section). But since in actual problems the approximation is explicit already in the setting of the problem, the answer should also be stated in its terms. See examples in the next section.

\subsection{Geometric formulations}

We will recall some well-known geometric formulations, since the language of convex geometry is convenient and illustrative in this context.

\medskip
{\rm1.} The set of all Borel probability measures on a separable compact set invariant under the action of a countable group (or equivalence relation) is a simplex (= Choquet simplex), i.e.,\ a separable affine compact set in the weak topology whose any point has a unique decomposition into an integral with respect to a measure on the set of extreme points.\footnote{Choquet's theorem on the decomposition of points of a convex compact set into an integral with respect to a probability measure on the set of extreme points is a strengthening, not very difficult, of the previous fundamental Krein--Milman theorem saying that a convex affine compact set is the weak closure of the set of convex combinations of extreme points.}
The set of ergodic measures is the Choquet boundary, i.e., the set of extreme points, of this simplex; it is always a
$G_{\delta}$ set.
\medskip

{\rm2.} Terminology (somewhat less than perfect): a Choquet simplex is called a Poulsen simplex if its Choquet boundary is weakly dense in it, and it is called a Bauer simplex if the boundary is closed (see \cite{Fel}).  Cases intermediate between these two ones are possible.

\medskip
{\rm3.} A projective limit of simplices (see below) is a Poulsen simplex if and only if for any $n$ the union of the projections of the vertex sets of the simplices with greater numbers to the $n$th simplex is dense. The universality of a Poulsen simplex was later observed and proved by several authors.

\begin{statement}
All separable Poulsen simplices are topologically isomorphic as affine compacta; this unique, up to isomorphism, simplex is universal in the sense of model theory.\footnote{That is, for every separable simplex there exists an injective affine map of this simplex into the Poulsen simplex, and an isomorphism of any two isomorphic faces of the Poulsen simplex can be extended to an automorphism of the whole simplex.}
\end{statement}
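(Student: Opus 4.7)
The plan is to prove uniqueness and universality through a back-and-forth argument on finite-dimensional skeletons, driven by the density of extreme points that defines the Poulsen property. First, using Proposition 3, I would present every separable Choquet simplex $S$ as a projective limit $S = \varprojlim \Sigma_n$ of finite-dimensional simplices with affine surjective bonding maps $p_{n,n-1}\colon \Sigma_n \to \Sigma_{n-1}$. By the characterization in item 3 of the preceding remarks, $S$ is Poulsen precisely when, for every $n$ and every $\varepsilon>0$, the union $\bigcup_{m\ge n} p_{m,n}(\operatorname{ext} \Sigma_m)$ is eventually $\varepsilon$-dense in $\Sigma_n$. This density criterion is the combinatorial engine of the whole argument.

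For universality, given an arbitrary separable simplex $T=\varprojlim T_n$ and a Poulsen simplex $P=\varprojlim \Sigma_n$, I would inductively build affine embeddings $\iota_n\colon T_n \hookrightarrow \Sigma_{k(n)}$ that commute with the bonding maps up to a prescribed error $\varepsilon_n$ with $\sum \varepsilon_n<\infty$: at each step, Poulsen density allows me to send each extreme point of $T_n$ to an extreme point of a sufficiently deep $\Sigma_{k(n)}$ whose projection is within $\varepsilon_n$ of the required target in $\Sigma_{k(n-1)}$. A standard weak-compactness argument, together with the summability of errors, then produces a genuine affine injection $T \hookrightarrow P$ in the limit, and the image is automatically a face because the construction preserves extreme points.

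For uniqueness, I would run the analogous back-and-forth between two separable Poulsen simplices $P=\varprojlim \Sigma_n$ and $P'=\varprojlim \Sigma'_n$: fix countable dense sets of extreme points on each side and alternately extend a partial affine isomorphism between finite-dimensional faces, at each stage using Poulsen density on the opposite side to realize the next required target as a genuine extreme point. The compatible finite approximations converge in the weak topology to an affine homeomorphism $P \cong P'$. The homogeneity assertion of the footnote, namely that every isomorphism between two closed faces of a Poulsen simplex extends to an automorphism, follows from the same back-and-forth by taking the given face isomorphism as the initial datum of the induction.

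The main obstacle is the extension step in the back-and-forth. At each stage one holds an affine isomorphism between finite-dimensional faces and must adjoin a new extreme point on one side whose prescribed image on the other side may fail to lie in the extreme boundary; only the Poulsen density property rescues this, by letting us perturb the target to a genuine extreme point at controllable cost. Making the perturbations quantitative, with uniform enough control for the compositions to converge, and verifying that the limit map is injective and preserves the simplex structure, is the technical heart of the argument; everything else is bookkeeping on projective limits, which is precisely what Proposition 3 provides.
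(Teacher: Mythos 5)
The paper does not actually prove this proposition: it is quoted as a known result (``the universality of a Poulsen simplex was later observed and proved by several authors''), i.e.\ the Lindenstrauss--Olsen--Sternfeld theorem, so there is no internal argument to compare yours against. Your outline --- approximate intertwining of projective-limit presentations, driven by the density of the projected vertex sets that characterizes the Poulsen property --- is the standard route, and the architecture (universality by a one-sided intertwining with summable errors, uniqueness and homogeneity by a back-and-forth) is the right one. One preliminary slip: Proposition~3 of the paper says only that a projective limit of finite-dimensional simplices is a Choquet simplex; the converse presentation of an arbitrary metrizable simplex as such a limit, which your very first step requires, is the Lazar--Lindenstrauss theorem and is not stated in the paper.

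The genuine gaps sit exactly where you write ``bookkeeping.'' First, the claim that the image of $T$ ``is automatically a face because the construction preserves extreme points'' does not follow: sending each $\operatorname{ext}(T_n)$ into $\operatorname{ext}(\Sigma_{k(n)})$ does not guarantee that the limit map sends $\operatorname{ext}(T)$ into $\operatorname{ext}(P)$. By the paper's own extremality criterion (Proposition~5), a coherent sequence of vertices determines an extreme point of the limit only if the projected barycentric decompositions collapse to $\delta$-measures; this has to be engineered into the choice of the $\iota_n$, not assumed. Second, controlling the maps only on extreme points is insufficient to produce an affine homeomorphism of the full compacta in the limit: the affine extension of a map defined on a dense subset of the extreme boundary need not be continuous, so one must control the approximating affine maps on all of $\Sigma_n$ (equivalently, build isometries of the function systems $A(P)$ and $A(P')$, which is how the published proofs proceed). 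Third, for the homogeneity assertion the given face isomorphism is an infinite-dimensional object and cannot be the literal ``initial datum of the induction''; it must itself be exhausted by finite-dimensional approximations interleaved with the extension steps, and a closed face of $P$ need not be compatible with a previously fixed projective-limit presentation. None of this is fatal --- it is precisely where the actual proof spends its effort --- but as written the proposal asserts these points rather than establishing them.
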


One can easily check that every projective limit of simplices arises when studying {\it quasi-invariant measures} on the path space of a graph, or Markov measures with given cotransition probabilities (see above). But in what follows we consider only central measures, i.e., take a quite special system of projections in the definition of a projective limit. However, there is no significant difference in the method of investigating the general case compared with the case of central measures. We will return to this question elsewhere.

We formulate two simple facts, which follow from definitions.

\medskip
{\rm4.} Every ergodic central measure on a Markov compactum (on the path space of a graph) is a Markov measure with respect to the structure of the Markov compactum (the ergodicity condition is indispensable here).

\medskip
{\rm5.} The tail filtration is semi-homogeneous with respect to every ergodic central measure, which means exactly that almost all conditional measures for every partition $\xi_n$, $n=1,2, \dots$, are uniform.

The metric theory of semi-homogeneous filtrations will be treated in a separate paper.

\subsection{The extremality of points of a projective limit, and the ergodicity of Markov measures}

We give a criterion for the ergodicity of a measure in terms of general projective limits of simplices, in other words, a criterion for the extremality of a point of a projective limit of simplices.

Assume that we are given an arbitrary projective limit of simplices
$$ \Sigma_1 \leftarrow \Sigma_2 \leftarrow \dots \leftarrow\Sigma_n\leftarrow \Sigma_{n+1}\leftarrow \dots\leftarrow \Sigma_{\infty}$$
with affine projections $p_{n,n-1}:\Sigma_n\rightarrow \Sigma_{n-1}$, $n=1,2, \dots$ (the general projection
$p_{m,n}:\Sigma_m\rightarrow \Sigma_n$ is given above).

Consider an element $x_{\infty} \in \Sigma_{\infty}$ of the projective limit; it determines, and is determined by, the sequence of its projections $\{x_n\}_{n= 1,2, \dots}$, $x_n \in \Sigma_n$,  to the finite-dimensional simplices. Fix positive integers
$n<m$ and take the (unique) decomposition of the element $x_m$, regarded as a point of the simplex $\Sigma_m $, into a convex combination of its extreme vertices $e_i^m$:
$$x_m=\sum_i c_m^i \cdot e_i^m, \quad \sum_i c_m^i=1, \quad c_m^i\geq 0;$$
denote by $\mu_m=\{c_m^i\}_i$ the measure on the vertices of $\Sigma_m$ corresponding to this decomposition. Project this measure $\mu_m$ to the simplex $\Sigma_n$, $n<m$, and denote the obtained projection
by $\mu_m^n$; this is a {\it measure on $\Sigma_n$, and thus a random point of $\Sigma_n$}; note that this measure is not, in general, concentrated on the vertices of the simplex $\Sigma_n$.

\begin{statement}[Extremality of a point of a projective limit of simplices]\label{prop5}
A point $x_{\infty}=\{x_n\}_n$ of the limiting simplex $\Sigma_{\infty}$ is extreme if and only if the sequence of measures $\mu_n^m$ weakly converges, as $m \to \infty$, to the $\delta$-measure $\delta_{x_n}$ for all values of $n$:
\begin{eqnarray*}\mbox{for every } \epsilon>0, \mbox{ for every } n \mbox{ there exists }K=K_{\epsilon,n}\mbox{ such that }\\  \mu_n^m(V_{\epsilon}(\mu_n))>1-\epsilon\quad \mbox{for every }m>K,
\end{eqnarray*}
 where $V_\epsilon(\cdot)$ is the $\epsilon$-neighborhood of a point in the usual (for instance, Euclidean) topology.
\end{statement}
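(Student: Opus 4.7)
The plan is to realize each point $x_\infty$ of the projective limit as a probability measure $\mu$ on the associated Markov path space and then interpret $\mu_m^n$ as the law of a conditional expectation that is a reverse martingale in the index $m$, so that martingale convergence controls its asymptotics.

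Concretely, given $x_\infty = (x_n)$ I would introduce random variables $V_m$ with $V_m \sim x_m$ and with the conditional distribution of $V_n$ given $V_m$ (for $n<m$) equal to $p_{m,n}(\delta_{V_m})\in\Sigma_n$; coherence of $x_\infty$ makes this consistent and determines a forward Markov chain $\mu$ on sequences of vertices whose forward transitions are recovered from the cotransitions by Bayes' formula. Under this encoding, the decomposition measure $\mu_m$ of $x_m$ into extreme points of $\Sigma_m$ is simply $x_m$ read as a law on the finite vertex set of $\Sigma_m$, and the projection $\mu_m^n$ is precisely the distribution of the $\Sigma_n$-valued random element $W_m^n := p_{m,n}(\delta_{V_m})$. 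By the Markov property, $W_m^n = \mathbb{E}_\mu[\delta_{V_n} \mid V_m] = \mathbb{E}_\mu[\delta_{V_n}\mid \mathcal{T}_m]$, where $\mathcal{T}_m := \sigma(V_m, V_{m+1},\ldots)$.

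For the direction ``$x_\infty$ extreme $\Rightarrow$ concentration,'' I would apply the reverse martingale convergence theorem: since $\mathcal{T}_m \downarrow \mathcal{T}_\infty := \bigcap_m \mathcal{T}_m$, we have $W_m^n \to \mathbb{E}_\mu[\delta_{V_n}\mid \mathcal{T}_\infty]$ $\mu$-almost surely in the ambient Euclidean space of $\Sigma_n$. As already invoked in the paper's treatment of ergodic central measures, extremality of $x_\infty$ in $\Sigma_\infty$ is equivalent to $\mu$-triviality of $\mathcal{T}_\infty$: affine decompositions of $x_\infty$ that preserve every projection $p_{m,n}$ correspond bijectively to $\mathcal{T}_\infty$-measurable Radon--Nikodym densities, since any two measures with the same cotransition kernels differ only through the tail. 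When $\mathcal{T}_\infty$ is trivial the almost sure limit is the constant $\mathbb{E}_\mu[\delta_{V_n}] = x_n$, so $W_m^n \to x_n$ in probability, which is exactly the required uniform estimate $\mu_m^n(V_\epsilon(x_n)) > 1-\epsilon$ for all sufficiently large $m$. For the converse I would argue directly: if $x_\infty = \alpha y + (1-\alpha)z$ with $0<\alpha<1$ and $y\neq z$, then at each level $x_m = \alpha y_m + (1-\alpha) z_m$, and uniqueness of the decomposition into extreme vertices of the finite simplex $\Sigma_m$ forces $\mu_m = \alpha \mu_m^{(y)} + (1-\alpha)\mu_m^{(z)}$ on vertices, and hence $\mu_m^n = \alpha (\mu_m^{(y)})^n + (1-\alpha)(\mu_m^{(z)})^n$ on $\Sigma_n$; concentration of the left side near $x_n$ forces each summand to concentrate there too (any open set $U$ with $x_n\notin\bar U$ receives vanishing mass from the convex combination), and taking barycenters gives $y_n=z_n=x_n$ for every $n$, contradicting $y\neq z$.

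The step I expect to require most care is the identification of extremality in the abstract $\Sigma_\infty$ with $\mu$-triviality of $\mathcal{T}_\infty$. In the graded-graph/central-measure context of this paper it is the familiar statement that ergodic central (more generally, $\Lambda$-) measures are the extreme points of $\Sigma(\Gamma)$ (respectively $\Sigma(\Gamma)_\Lambda$), and follows from the ergodic decomposition for the tail equivalence relation; for a completely abstract projective limit of simplices one has to verify the corresponding tail-cocycle characterization, namely that any measure on the path space with the same prescribed cotransitions as $\mu$ is obtained from $\mu$ by multiplication with a $\mathcal{T}_\infty$-measurable density, so that the affine decompositions of $x_\infty$ in $\Sigma_\infty$ are in bijection with measurable tail decompositions of $\mu$.
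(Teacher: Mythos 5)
Your argument is correct, but it is not the route the paper takes. The paper's own proof is a two-line geometric sketch: one uses the continuity of the Choquet decomposition of $x_{\infty}$ into extreme points with respect to the projective-limit topology, projects that decomposition to the finite-dimensional simplices $\Sigma_n$, and observes that the projected decompositions collapse to $\delta$-measures precisely for extreme points (details deferred to the reference on central measures). You instead work on the probabilistic side of the dictionary of Section~3.5: you encode $x_\infty$ as the Markov measure $\mu$ with the prescribed cotransitions, identify $\mu_m^n$ as the law of the reverse martingale $\mathbb{E}_\mu[\delta_{V_n}\mid\mathcal{T}_m]$, and invoke decreasing-martingale convergence together with the equivalence ``extreme in $\Sigma_\infty$ $\Leftrightarrow$ $\mu$-trivial tail'' -- an equivalence the paper also uses, and your reduction of it to the tail-measurable Radon--Nikodym density of two measures with the same cocycle is the right justification. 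Interestingly, the paper itself anticipates your reading in the sentence immediately following the proposition (``a topological version of the theorem on convergence of martingales in measure''), but presents it as a reformulation rather than as the proof. What your route buys is a self-contained, checkable argument resting on a standard convergence theorem, plus almost-sure (not just in-measure) convergence in the extreme case, which is exactly the strengthening the paper says is needed to justify the ergodic method; what the paper's route buys is brevity and the fact that it works verbatim for the closure properties of the Choquet boundary discussed in the following subsections. Your converse direction (uniqueness of barycentric coordinates at each finite level, domination of each summand by the convex combination, and continuity of the barycenter under weak convergence) is elementary and complete, and is essentially the ``only for them'' half of the paper's sketch made explicit.
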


It suffices to use the continuity of the decomposition of an arbitrary point~$x_{\infty}$ into extreme points in the projective limit topology, and project this decomposition to the finite-dimensional simplices; then for extreme points, and only for them, the sequence of projections must converge to a $\delta$-measure. For details, see \cite{2014}.

One can easily rephrase this criterion for our case $\Sigma_{\infty}=\Sigma(\Gamma)=\Sigma(\cal X)$.
Now it is convenient to regard the coordinates (projections) of a central measure
$\mu_{\infty}$ not as points of finite-dimensional simplices, but as measures  $\{\mu_n\}_n$ on their vertices
 (which is, of course, the same thing). Then the measures $\mu_m^n$
 should be regarded as measures on probability vectors indexed by the vertices of the simplex, and the measure
$\mu$ on the Markov compactum~$\cal X$ (or on $T(\Gamma)$), as a point of the limiting simplex $\Sigma$. The criterion then says that $\mu$ is an ergodic measure (i.e., an extreme point of $\Sigma$) if and only if the sequence of measures
$\mu_m^n$ (on the set of probability measures on the vertices of the simplex $\Sigma_n$) weakly converges as $m \to \infty$
to the measure $\mu_n$ (regarded as a measure on the vertices of $\Sigma_n$) for all $n$.

In probabilistic terms, our assertion is a topological version of the theorem on convergence of martingales in measure and has a very simple form: for every $n$, the conditional distribution of the coordinate $x_{n}$ given that the coordinate
 $x_m$, $m>n$, is fixed converges in probability to the unconditional distribution of  $x_n$ as $m\to \infty$.

According to Proposition~\ref{prop5}, in order to find the finite-dimensional projections of ergodic measures, one should enumerate all $\delta$-measures that are weak limits of measures $\mu_n^m$ as $m\to \infty$. But, of course, this method is inefficient and tautological. The more efficient {\it ergodic method} requires, in order to be justified, a strengthening of this proposition, namely, one should replace convergence in measure with convergence almost everywhere.

\subsection{All boundaries in geometric terms}

The following definition is a paraphrase of the definition of the Martin boundary in terms of projective limits.

\begin{definition} A point   $\{x_n\}\in \Sigma_{\infty}$ of a projective limit of simplices belongs to the Martin boundary if there is a sequence of vertices $\alpha_n \in \mbox{\rm ex}(\Sigma_n)$, $n =1,2, \dots$, such that for every $m$ and an arbitrary neighborhood $V_{\epsilon}(x_m)\subset \Sigma_m$ there exists $N$ such that  $$p_{n,m}(\alpha_n) \in V_{\epsilon}(x_m)$$ for all $n>N$.
\end{definition}

 Less formally, a point of the limiting simplex belongs to the Martin boundary if there exists a sequence of vertices that weakly converges to this point (``from the outside'').
The condition of belonging to the Martin boundary is a weakening of the almost extremality criterion, hence the following assertion is obvious.

\begin{statement}
The Martin boundary contains the closure of the Choquet boundary.
\end{statement}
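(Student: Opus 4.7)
The plan is to establish the inclusion by splitting it into two steps: (i) every extreme point of $\Sigma_\infty$ lies in the Martin boundary, and (ii) the Martin-boundary membership is preserved under limits of sequences in $\Sigma_\infty$. Since the closure of the Choquet boundary is precisely the set of limits of extreme points, combining (i) and (ii) gives the claim.

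For (i), fix an extreme $x_\infty = (x_n) \in \Sigma_\infty$. Proposition~\ref{prop5} tells us that for every fixed level $j$ and every $\eta > 0$ the pushforward measure $\mu_n^j = (p_{n,j})_*\mu_n$ on $\Sigma_j$ (where $\mu_n$ is the vertex decomposition of $x_n$) places mass at least $1-\eta$ inside $V_\eta(x_j)$ once $n$ is large. Fix a target depth $k$, apply this simultaneously to the finitely many levels $j \le k$ with $\eta = 1/(2(k+1))$, and use a union bound: for $n$ beyond some threshold $N_k$, the set of vertices $\alpha \in \operatorname{ex}(\Sigma_n)$ satisfying $p_{n,j}(\alpha) \in V_{1/(2(k+1))}(x_j)$ for every $j \le k$ has total $\mu_n$-weight at least $1/2$, and in particular is non-empty. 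Taking $N_k$ strictly increasing and choosing $\alpha_n$ from this set for $N_k \le n < N_{k+1}$, one gets $p_{n,m}(\alpha_n) \to x_m$ as $n \to \infty$ for every fixed $m$, so $x_\infty$ is in the Martin boundary.

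For (ii), suppose $y = \lim_j e^{(j)}$ in $\Sigma_\infty$ with each $e^{(j)}$ in the Martin boundary and carrying a witnessing sequence $(\alpha_n^{(j)})_n$. Diagonalize: choose $J_k \to \infty$ so that $e_m^{(J_k)}$ is within $1/k$ of $y_m$ for every $m \le k$ (possible since convergence in the projective limit is coordinatewise and only finitely many coordinates are constrained at each stage); then pick $N_k$ strictly increasing so that $p_{n,m}(\alpha_n^{(J_k)})$ is within $1/k$ of $e_m^{(J_k)}$ for all $n > N_k$ and all $m \le k$, which is a finite set of Martin-boundary convergence conditions for the sequence $(\alpha_n^{(J_k)})_n$. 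Setting $\alpha_n = \alpha_n^{(J_k)}$ for $N_k < n \le N_{k+1}$ gives $p_{n,m}(\alpha_n) \to y_m$ for each fixed $m$ by a triangle inequality, so $y$ lies in the Martin boundary.

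The only genuine obstacle is the union-bound argument in step (i): one must arrange that a single extreme vertex of $\Sigma_n$ simultaneously approximates $x_m$ for all $m$ up to a depth $k(n) \to \infty$, which forces one to make the decay rate in the extremality criterion quantitative relative to $k$. Once that is done the rest is standard diagonalization, and no deeper structural property of $\Sigma_\infty$ beyond Proposition~\ref{prop5} is needed; this matches the author's remark that the assertion is essentially a direct weakening of the extremality criterion.
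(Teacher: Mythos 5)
Your argument is correct and follows the route the paper intends: the paper dismisses the statement in one line by noting that membership in the Martin boundary is a weakening of the extremality criterion of Proposition~\ref{prop5}, and that is precisely your step (i), carried out correctly via the union bound over levels $j\le k$ with tolerance $1/(2(k+1))$. Your step (ii), the diagonalization showing that the Martin boundary is stable under limits in $\Sigma_{\infty}$ (needed to pass from the extreme points themselves to their closure), is the part the paper's one-line justification leaves implicit, and you supply it correctly.
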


However, there are examples where the Martin boundary contains the closure of the Choquet boundary as a proper subset. A question arises: can one describe the Martin boundary in terms of the limiting simplex itself? The negative answer was obtained in \cite{VM2015}.

\subsection{A probabilistic interpretation of properties of projective limits}

Parallelism between considering pairs \{a graded graph, a system of cotransition probabilities\} on the one hand and considering projective limits of simplices on the other hand means that the latter subject has a probabilistic interpretation. It is useful to describe it without appealing to the language of pairs. Recall that in the context of projective limits, a path is a sequence
$\{ t_n\}_n$ of vertices $t_n \in \mbox{ex}\Sigma_n$ that agrees with the projections $p_{n,n-1}$ for all $n\in \mathbb N$ in the following sense: $p_{n,n-1}t_n$ has a nonzero barycenter coordinate with respect to $t_{n-1}$. First of all, every point $x_{\infty} \in \Sigma_{\Lambda}$ of the limiting simplex is a sequence  $\{x_n\}$ of points of the simplices $\Sigma_n$ that agrees with the projections: $\pi_{n,n-1}x_n=x_{n-1}$, $n \in \mathbb N$. As an element of the simplex, $x_n$ determines a measure on its vertices, and, since all these measures agree with the projections, $x_{\infty}$ determines a measure $\mu_x$
 on the path space with fixed cotransition probabilities. Conversely, every such measure comes from a point $x_\infty$. Thus the limiting simplex is the simplex of all measures on the path space with given cotransition probabilities. The extremality of a point $\mu\in \mbox{ex}(\Sigma_{\Lambda})$ means the ergodicity of the measure $\mu$, i.e., the triviality with respect to $\mu$ of the tail $\sigma$-algebra on the path space. The above extremality criterion  has a simple geometric interpretation, on which we do not dwell.

So, we have considered the following boundaries of a projective limit of simplices (or an equipped graph):

  {\it  the Poisson--Furstenberg boundary}  $\subset$ {\it the Dynkin boundary} = {\it the Choquet boundary} $\subset$ {\it the closure of the Choquet boundary} $\subset$
    {\it the Martin boundary}  $\subset$ {\it the limiting simplex}.

The first boundary is understood as a measure space; all inclusions are, in general, strict; the answer to the question of whether the Martin boundary is a geometric object is negative, see \cite{VM2015}.

We summarize this section with the following conclusion:
{\it the theory of equipped graded graphs  {\rm(}i.e., pairs $\{$a graded graph $+$ a system of cotransition probabilities$\}${\rm)}
is identical to the theory of Choquet simplices regarded as projective limits of finite-dimensional simplices.}

\subsection{The definition of the intrinsic metric
on the path space  for central measures}

We proceed to our main goal, which is to construct an approximation of a projective limit of simplices, i.e., a simplex of measures with a given cocycle, and to define the ``intrinsic metric (topology)'' on this limit. This metric was defined in  recent papers by the author \cite{2014,VEq,In,Sm,2015} on path spaces of graphs, only for central measures and under some additional conditions on the graph (the absence of vertices with the same predecessors). In this section, we give this definition in the same generality, for an arbitrary graded graph and the trivial cocycle ($c=1$), or for central measures (see Section~2); most importantly, we consider the whole limiting simplex and not only its Choquet boundary. This allows us to study the boundary for graphs with nonstandard (noncompact) intrinsic metrics. We formulate definitions and results both in terms of equipped graded graphs and in terms of projective limits of simplices spanned by the vertices of different levels.

In the next section (Section~4) devoted to the theory of filtrations, we will give a general definition of the intrinsic metric (topology) and the definition of a general standard filtration. Note that the definition given in the current section also makes sense in the general case (for noncentral measures), and in the case of central measures these two definitions are equivalent, but for noncentral measures they are,  in general,  not equivalent. The difference between two definitions consists in
the different manner of iterating the main operation, which is defined below.

We start with the definition of an important topological operation which will be repeatedly used, that of ``transferring a metric."

Let $(X, \rho_X)$ be a metric space and $\phi: X \to Y$ be a (Borel-)measurable map from $X$ to a Borel space $Y$;
assume that the preimages of points $\phi^{-1}(y)$, $y \in \phi(X)\subset Y$, are endowed with Borel probability measures
 $\nu_y$ that depend on $y$ in a Borel-measurable way;  $\phi$ will be called an {\it equipped map}.

\begin{definition}
The result of transferring the metric $\rho_X$ on the space $X$ to the Borel space $Y$ along the equipped map $$\phi:X\rightarrow Y$$ is the metric $\rho_Y$ on  $Y$ defined by the formula $$\rho_Y (y_1,y_2)=k_{\rho_X}(\nu_{y_1},\nu_{y_2}),$$
where $k_{\rho}$ is the classical Kantorovich metric on Borel probability measures on  $(X,\rho_X)$.
\end{definition}

1. Consider an equipped graph $(\Gamma, \Lambda)$ and the corresponding projective limit of simplices
 $\Sigma_{\Lambda}(\Gamma)$. Define an arbitrary metric $\rho=\rho_1$  on the path space $T(\Gamma)$ that agrees with the Cantor topology on
$T(\Gamma)$; denote by $k_{\rho_1}$ the Kantorovich metric on the space $\Delta(\Gamma)$ of all Borel probability measures on  $T(\Gamma)$ constructed from the metric  $\rho_1$.
  See the original definition of the Kantorovich metric (1942) in \cite{Kant}; see also \cite{V100} and the definition below).

2. Given an arbitrary path $v \equiv\{v_n\}$, consider the finite set of paths $v(u)=\{u,v_2, \dots\}$
whose coordinates coincide with the corresponding coordinates of $v$ starting from the second one, and assign each of these paths the measure $\lambda_{v_2}^u$. Now define an equipped map
$\phi_1:T(\Gamma) \rightarrow \Delta(\Gamma)=\Delta_1$, which sends the path $v$ to the measure $\sum\limits_ {u:\,u\prec v_2}\lambda_{v_2}^u \delta_{v(u)}$. It is more convenient to regard it as a map from the simplex
 $\Delta(\Gamma)$ to itself, by identifying a path with the $\delta$-measure at it.

Observe two important properties of the operation that associates with a metric space the simplex of probability measures on this space equipped with the Kantorovich metric:

1) monotonicity proved in \cite{2014}: the inequality $\rho \leq k\rho'$ implies $K_{\rho}\leq rK_{\rho'}$;

2) linearity in the metric: $K_{a\rho_1+ b\rho_2}=aK_{\rho_1}+bK_{\rho_2}$, $a,b > 0$.

Transferring the metric $\rho_1$ along the equipped map $\phi_1$, we obtain a metric~$\rho_2$ on a subset $\Delta_2=\phi_1(\Delta_1)$ of the simplex $\Delta (\equiv \Delta_1(\Gamma))$.

3. In a similar way we define the map $\phi_2$ that sends every measure from $\Delta_2$ concentrated on paths of the form
$\{u_1,v_2, \dots\}$, ${u_1\!\prec\!v_2}$, to the measure on the finite collection of paths of the form $\{u_1,u_2,v_3, \dots\}$
whose coordinates coincide with $v_i$  starting from the third one and the second coordinate $u_2$ runs over all vertices $u_2 \prec v_3$ with probabilities $\lambda_{v_3}^{u_2}$. Again transferring the metric~$\rho_2$ from  the space $\Delta_2$ along the equipped map $\phi_2$, we obtain a metric~$\rho_3$ on the image $\Delta_3 \equiv \phi_2(\Delta_2)=\phi_2\phi_1(\Delta)$.

Note that the images of the maps $\phi_n$, i.e., the sets $\Delta_n$, are simplices, but their vertices are no longer $\delta$-measures on the path space, but measures with finite supports of the form $\sum\limits_ {u_1,u_2, \dots, u_k} \lambda_{u_2}^{u_1}\cdots \lambda_{v_{k+1}}^{u_k}\cdot \delta_{u_1,\dots, u_k,v_{k+1}, \dots}$. The definition of the simplices $\Delta_n$ does not depend on the metrics $\rho_n$.

 4. Continuing this process indefinitely, we obtain an infinite sequence of metrics on the decreasing sequence of simplices
 \begin{align*}
 \Delta_n&=\phi_{n-1}(\Delta_{n-1})=\phi_n\phi_{n-1}\cdots \phi_1(\Delta_1),
 \\
  \Delta&=\Delta_1 \supset \Delta_2 \supset \Delta_3 \supset \cdots, \quad  \bigcap_n \Delta_n =\Delta_{\infty}.
  \end{align*}
  Thus we have a sequence of equipped maps of the decreasing sequence of simplices
   $$\Delta_1 \to \Delta_2\to \cdots\to \Delta_n \to \dots \to\Delta_{\infty}.$$

  The following assertion does not involve the metric.

\begin{statement}
The intersection $\Delta_{\infty}$ of all simplices $\Delta_n$ consists exactly of those measures on the path space
 $T(\Gamma)$ (i.e., those points of the simplex  $\Delta(\Gamma)$ of all measures) that have given cotransition probabilities (given cocycle), and, therefore, this intersection coincides with the projective limit of the simplices:
$$\Delta_{\infty}=\Sigma_{\Lambda}(\Gamma).$$
\end{statement}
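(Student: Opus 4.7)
The proposition identifies two descriptions of the same set: the intersection $\Delta_\infty = \bigcap_n \Delta_n$ produced dynamically by the equipped maps $\phi_n$, and the projective limit $\Sigma_\Lambda(\Gamma)$ characterized by the prescribed cocycle $\Lambda$. The plan is first to give an intrinsic description of $\Delta_n$ via conditional distributions along the tail filtration, and then to read off both inclusions from the definition of cotransition probabilities.

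First, I would unfold the definition of the vertices of $\Delta_n$. Since each $\phi_k$ is defined on vertices and extended affinely, $\Delta_n = \phi_{n-1}\circ\cdots\circ\phi_1(\Delta_1)$ is a simplex whose extreme points are the iterated images of the $\delta_v$, $v \in T(\Gamma)$. By induction on $n$, this image is the measure supported on paths coinciding with $v$ from coordinate $n+1$ onward, assigning weight $\prod_{i=1}^n \lambda_{u_{i+1}}^{u_i}$ (with $u_{n+1}:=v_{n+1}$) to the path $(u_1,\ldots,u_n,v_{n+1},v_{n+2},\ldots)$. Each vertex is thus the reverse-Markov measure determined by $\Lambda$ on the finite tree of predecessor chains ending at $v_{n+1}$, concatenated with the tail of $v$. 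Consequently, a Borel probability $\mu$ on $T(\Gamma)$ lies in $\Delta_n$ if and only if its conditional distributions on the blocks of the partition $\xi_{n+1}$ (paths coinciding from level $n+1$) are $\mu$-a.s.\ these vertex measures; equivalently, the $\mu$-conditional distribution of $(v_1,\ldots,v_n)$ given $(v_{n+1},v_{n+2},\ldots)$ depends only on $v_{n+1}$ and coincides with the above product of $\lambda$'s.

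For the inclusion $\Sigma_\Lambda(\Gamma) \subseteq \Delta_\infty$, take $\mu \in \Sigma_\Lambda(\Gamma)$. By the Markov property together with the definition of cotransition probabilities, $\mu\{v_i = u \mid v_{i+1},v_{i+2},\ldots\} = \lambda_{v_{i+1}}^u$ for every $i$; iterated application of the tower property of conditional expectation then yields exactly the factorization of the preceding paragraph for every $n$, so $\mu \in \Delta_n$ for all $n$. Conversely, if $\mu \in \bigcap_n \Delta_n$, applying the characterization at level $n$ and at level $n+1$ forces the one-step cotransitions of $\mu$ to equal $\lambda_{v_{i+1}}^{v_i}$, so $\mu$ is a Markov measure whose cotransitions coincide with $\Lambda$, i.e., $\mu \in \Sigma_\Lambda(\Gamma)$.

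The main obstacle is the intrinsic characterization of $\Delta_n$ in the first step: one must verify, via a Borel disintegration of $\mu$ along $\xi_{n+1}$, that the uncountably parametrized convex hull of the listed vertex measures is precisely the set of measures whose $\xi_{n+1}$-conditional distributions are the prescribed reverse-Markov ones. This needs a mild measurability argument to pass from the affine extension defined on $\delta$-measures to a genuine integral representation against the push-forward of $\mu$ to the tail $\sigma$-algebra. Once this disintegration is secured, the two inclusions are routine applications of the tower property, and no further analytic input is required.
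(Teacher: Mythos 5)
Your argument is correct, and it is essentially the intended one: the paper states this proposition without proof, and the natural justification is exactly what you give --- identify the vertices of $\Delta_{n}$ as the reverse-Markov measures built from $\Lambda$ on initial segments, characterize membership in $\Delta_{n}$ by the conditional distribution of the initial coordinates given the tail, and then read off both inclusions from the Markov property and the telescoping of the cotransition products. The only blemish is a harmless off-by-one in the indexing ($\Delta_n=\phi_{n-1}\cdots\phi_1(\Delta_1)$ prescribes the conditional law of the first $n-1$ coordinates given the path from level $n$ on, not of the first $n$ given level $n+1$), and the disintegration step you flag is indeed where the measure-theoretic content sits, but it is standard for Borel measures on the Polish space $T(\Gamma)$.
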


Now we define the main notion which grasps the drastic difference between two types of Markov compacta or graded graphs (Bratteli diagrams).

\begin{definition}
A Markov compactum ${\cal X}=T(\Gamma)$, or the path space  of a graded graph $\Gamma$, is called standard if there exists a limit $\lim_{n \to \infty}\rho_n = \rho_{\infty}$ of semimetrics on the space  $\Delta_{\infty}$ (= $\Sigma_{\Lambda}(\Gamma)$,
where $\Lambda$ is the system of cotransition probabilities).
More exactly, for every pair of paths  $x,y \in T(\Gamma)$
there exists a limit $$\lim_n \rho_n(x_n,y_n)\equiv \rho_{\infty}(x,y).$$
The existence of the limit does not depend on the choice of the initial metric.
In this case, the limiting simplex $\Sigma_{\Lambda}(\Gamma)$ is equipped with this intrinsic metric.
\end{definition}
Note that in this case $\rho_{\infty}$ generates the projective limit  topology on $\Sigma_{\Lambda}(\Gamma)$.

It is easy to check that the limiting intrinsic semimetric is the same for the whole class of cylinder semimetrics;
a more detailed analysis will be presented elsewhere.

\subsection{Standardness and limit shape theorems}

Now we can formulate a new alternative for the problem of studying  the asymptotics of the path space and measures on it. For simplicity, we state the problem only for central measures, but the case of an equipped graph and measures with given cotransition probabilities can be considered in the same way.

\begin{theorem}
Consider a graded graph $\Gamma$, and let $\Sigma_{\infty}$ be the simplex of all central measures.
The following two properties of the simplex are equivalent.

{\rm1.} The graph $\Gamma$ and the space $T(\Gamma)$ are standard
(i.e., there exists a limit of the semimetrics $\rho_n$).

{\rm2.} For every ergodic central measure $\mu$ on $T(\Gamma)$ the following is true: for $(\mu\times \mu)$-almost all pairs of paths, $\lim\rho_n(x,y)=0$ (``limit shape theorem'').

In this case, the Choquet boundary (= the set of extreme points) of the simplex $\Sigma_{\infty}$ is open in its closure.
\end{theorem}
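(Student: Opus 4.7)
The plan is to interpret $\rho_n(x,y)$ as an iterated Kantorovich distance between the regular conditional distributions of a central measure given the tail past level $n-1$, and then to exploit tail triviality for ergodic $\mu$, reverse martingale convergence, and the Kantorovich transfer properties recorded above. For the \emph{forward direction $(1)\Rightarrow(2)$}: the measure $\mu_n^x:=\phi_{n-1}\circ\cdots\circ\phi_1(\delta_x)$ depends on the path $x$ only through its coordinates past level $n-1$, so $\rho_n(x,y)$ is measurable in each argument with respect to $\mathfrak{A}_{n-1}$. Consequently, if the pointwise limit $\rho_\infty=\lim_n\rho_n$ exists, it is measurable with respect to $\mathfrak{A}_\infty\otimes\mathfrak{A}_\infty$, where $\mathfrak{A}_\infty:=\bigcap_n\mathfrak{A}_n$ is the tail $\sigma$-algebra. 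For an ergodic central $\mu$, the $\sigma$-algebra $\mathfrak{A}_\infty$ is $\mu$-trivial, so $\rho_\infty$ is $(\mu\times\mu)$-a.s.\ equal to some constant $c\ge 0$. To see $c=0$, I would use that for central $\mu$ the measure $\mu_n^x$ is precisely the regular conditional distribution of $\mu$ given the tail past level $n-1$; reverse martingale convergence then yields $\mu_n^x\to\mu$ weakly for $\mu$-a.e.\ $x$ (and similarly for $y$). Combining this with the monotonicity and linearity of the Kantorovich transfer, one concludes that $\rho_n(x,y)\to 0$ in $(\mu\times\mu)$-measure, so $c=0$.

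For the \emph{reverse direction $(2)\Rightarrow(1)$}: an ergodic decomposition of an arbitrary central $\nu$ combined with (2) yields $\rho_n\to 0$ $(\nu\times\nu)$-a.s.\ for every central $\nu$. The remaining task is to upgrade this to pointwise convergence of $\rho_n(x,y)$ for every pair $(x,y)\in T(\Gamma)\times T(\Gamma)$. I would do this by exploiting that every cylinder set supports suitable central measures, together with the lower semicontinuity of $\rho_n$ as a function on $T(\Gamma)\times T(\Gamma)$ inherited from the Kantorovich metric, to approximate arbitrary pairs by typical ones where the convergence has already been established. The final claim on the absolute boundary then follows: under standardness, (2) identifies $\Erg(\Gamma)$ with the locus $\{\mu:\rho_\infty=0\ (\mu\times\mu)\text{-a.s.}\}$, and a perturbation argument using lower semicontinuity of $\rho_\infty$ shows that this locus is open in its closure in $\Sigma_\infty$.

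\emph{Main obstacle.} The crux is the vanishing of $c$ in the forward direction: one must convert weak convergence $\mu_n^x\to\mu$ in $\Delta_1$ into smallness of $\rho_n(x,y)$, even though the $\rho_n$ live on successively different spaces $\Delta_n$ with metrics built by iterating Kantorovich. This requires carefully tracking how each transfer step interacts with weak convergence and crucially uses the centrality of $\mu$ so that $\mu_n^x$ coincides with the true conditional distribution. The promotion from almost-sure to pointwise convergence in the reverse direction is equally delicate and depends on the Cantor-like topological structure of $T(\Gamma)$.
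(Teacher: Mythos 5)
The paper gives no written proof of this theorem: it reduces it to the dichotomy stated in Section~4.4 (for an ergodic central measure the limit $\lim_n\rho_n(x,y)$ either fails to exist for almost all pairs, or exists and is then necessarily zero), which is itself presented as a corollary of the standardness criterion and deferred to the forthcoming article. Your proposal must therefore stand on its own, and the step you yourself flag as the crux --- deducing $c=0$ --- contains a genuine gap. Your chain of reasoning there is: ergodicity gives a trivial tail $\sigma$-algebra, hence $\mu_n^x\to\mu$ weakly for $\mu$-a.e.\ $x$ by reverse martingale convergence, hence $\rho_n(x,y)\to0$ in measure. Observe that this chain never uses hypothesis~(1); if it were valid it would prove unconditionally that $\rho_n\to0$ a.e.\ for \emph{every} ergodic central measure, i.e., that every ergodic central measure is standard. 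That is false: the entire point of the theory is the existence of ergodic (hence tail-trivial, hence reverse-martingale-convergent) central measures that are nonstandard, e.g.\ on the graph $UP$ and in Vershik's original 1970 example. The quantitative reason the implication fails is that weak convergence $\mu_n^x\to\mu$ controls only the one-shot Kantorovich distance $k_{\rho_1}(\mu_n^x,\mu_n^y)$, whereas $\rho_n(x,y)$ is the $n$-fold iterated (hierarchical) transport cost, an infimum over a strictly smaller, adapted class of couplings; hence $\rho_n(x,y)\ge k_{\rho_1}(\mu_n^x,\mu_n^y)$, and the monotonicity and linearity of the transfer operation give bounds in the wrong direction. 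This is precisely the gap between Proposition~5 of the paper (extremality is equivalent to weak convergence of the conditional measures --- true for all ergodic measures, standard or not) and standardness; bridging it is the content of the coupling machinery for finite filtrations in Section~4.4 ($r^{\rho}$-distances, Markov couplings), which your argument does not touch. A correct proof of $(1)\Rightarrow(2)$ must actually use the assumed existence of the limit.

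There are secondary weaknesses as well. In the forward direction, $\bigcap_n(\mathfrak{A}_n\otimes\mathfrak{A}_n)$ need not equal $\mathfrak{A}_\infty\otimes\mathfrak{A}_\infty$; to get that $\rho_\infty$ is $(\mu\times\mu)$-a.s.\ constant you should instead use that $\rho_\infty(\cdot,y)$ is tail-measurable for each fixed $y$, apply triviality, and then Fubini --- this part is repairable. In the reverse direction, passing from a.e.\ convergence for every ergodic central measure to existence of the limit for \emph{every} pair of paths is not achieved by lower semicontinuity of the individual $\rho_n$: semicontinuity of each term gives no convergence of the sequence at non-generic pairs, and pairs of paths outside the support of every ergodic central measure are not reached by your approximation. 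Finally, the openness of the Choquet boundary in its closure is asserted via an unspecified perturbation argument rather than proved.
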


\noindent{\bf Remark.} The latter property is not characteristic for standard graphs: there exist nonstandard graphs
with the same property.

\medskip
The important and new problem is how to calculate the intrinsic metric for graphs and how to distinguish standard and nonstandard graphs. Standard graphs are graphs for which the problem of  describing the central measures is feasible, because the list of these measures has a reasonable parametrization. We say that the problem of describing the central measures for a given graph $\Gamma$ is ``smooth'' (or tame) if the graph is standard. For a nonstandard graph like $NUP$ or $UP$,
it is impossible to give a natural parametrization of the set of all ergodic central measures. The precise meaning of this is that there is no way to distinguish between ergodic and nonergodic measures in terms of a finite approximation: the projection of the set of all ergodic measures is the whole finite simplex  (see Figure~7)
of the ``tower of measures," the projective limit of simplices with epimorphic projections.

We will consider a given central measure on $T(\Gamma)$ and the tail filtrations of graded graphs for this measure in the next section.

The notion of standardness of graded graphs (as well as of filtrations of measure spaces, or filtrations of pasts of Markov processes) is also a far-reaching generalization of the notion of independence. For example, the central measures on the Young graph are parameterized by the frequencies of rows and columns of Young diagrams, but the lengths of rows (and columns) are not independent in the literal sense. At the same time, the Young graph is standard, which means that there is an
asymptotic independence of statistics for parameters of random diagrams with respect to any central measure. See our recent paper \cite{2015}.

The term ``limit shape theorem'' is due to the interpretation of many our problems as problems  concerning the behavior of geometric configurations (like Young diagrams or some other type of diagrams). According to this interpretation, these are theorems on the concentration of the random (in the sense of a given measure) configuration near a deterministic one, see \cite{Ve96}. The metric used for measuring the distance can be different. We have used the intrinsic metric, which is universal by definition, but, for example, the well-known limit shape theorem for the Plancherel measure used the much stronger and natural, in that case, uniform metric for diagrams, see \cite{VeK81} and Figure~\ref{fig:PlanchereStatistics}. The intrinsic metric corresponds to the notion of weak standardness, so a limit shape theorem takes place for graded graphs and measures on the path spaces for which the tail filtration is weakly standard,  see the next section.

       \begin{figure}[h!]
                \centering
              \includegraphics[scale=0.4]{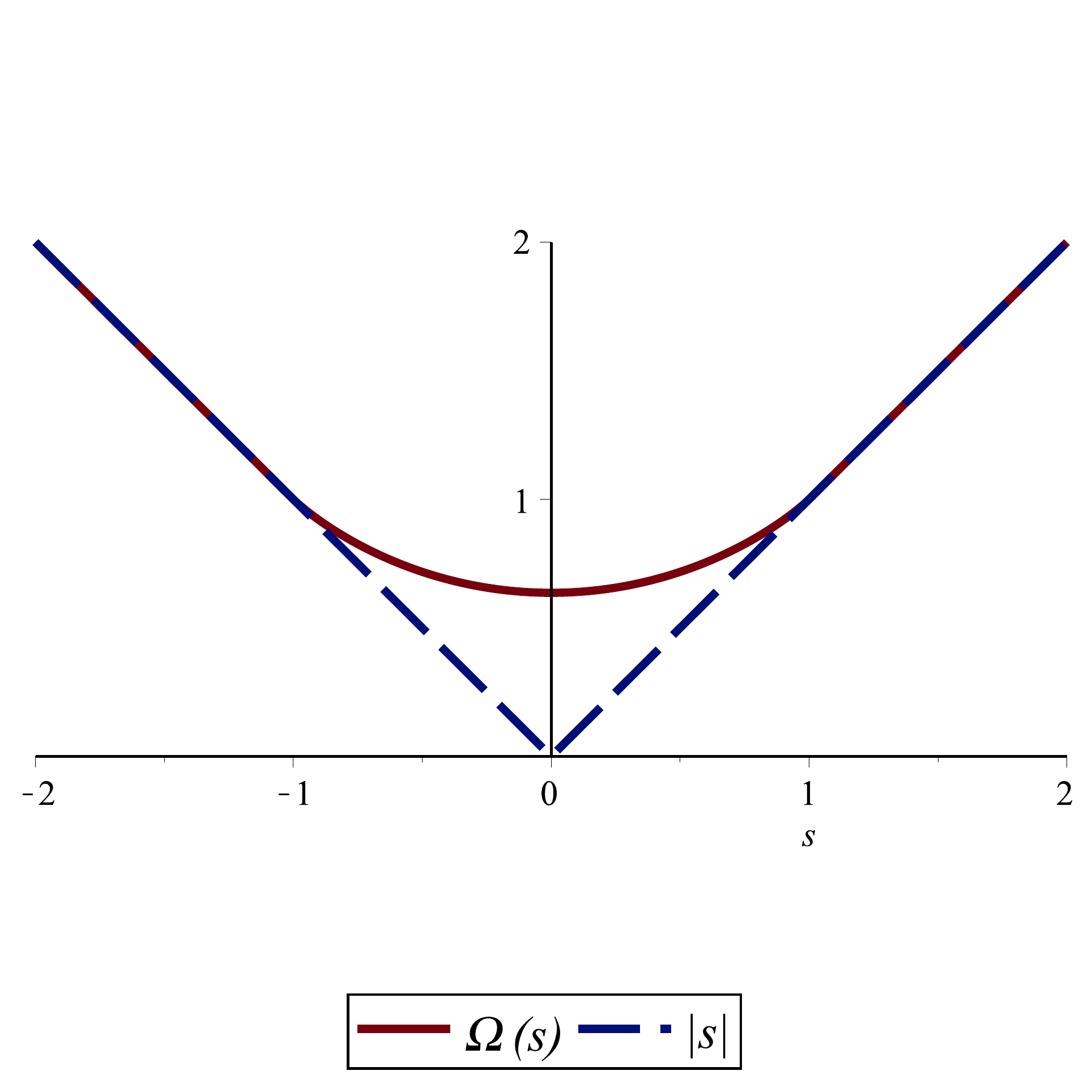}
             \[ \Omega(s) = \begin{cases}(2/\pi)(s\arcsin s+\sqrt{1-s^2})  &\text{for } \ |s|\leq 1,\\|s|  &\text{for} \ |s|\geq 1.\end{cases}\]
                \caption{The limit shape for the Plancherel measure. Vershik--Kerov (1977), Shepp--Logan (1977).}
                \label{fig:PlanchereStatistics}
        \end{figure}

  A completely different limit shape arises for the uniform statistics on  partitions (Young diagrams); it has the following form (for details, see \cite{Ve96}):

    \begin{figure}[h!]
                \centering
               \includegraphics[ scale=0.3]{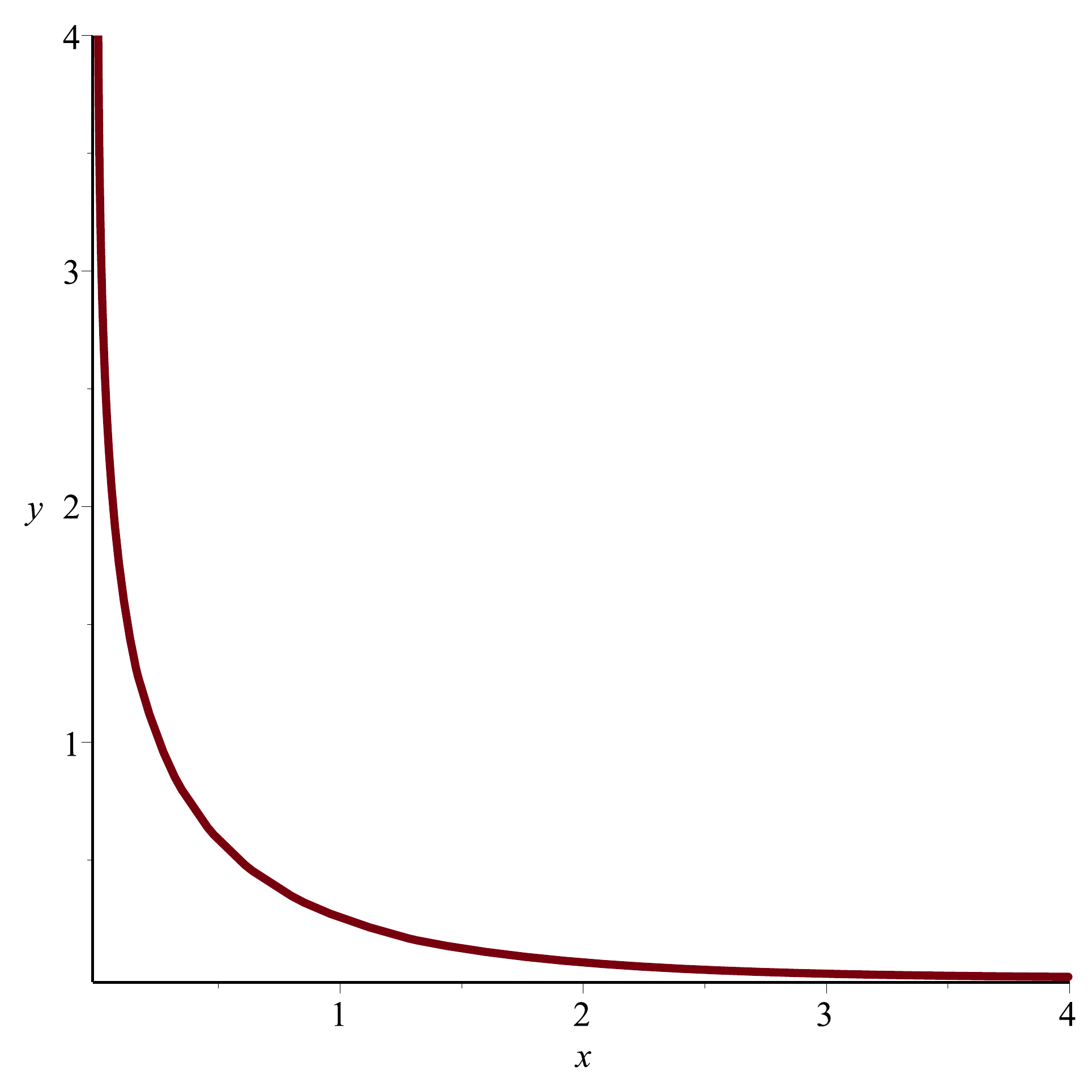}
                \caption{$\exp(-\frac{\pi}{\sqrt{6}}\, x)+\exp(-\frac{\pi}{\sqrt{6}}\, y) = 1$.}
                \label{fig:Caseb}
        \end{figure}%
%%%%%%%%%%%%%%%%%%%%%%%%%%%%%%%%%%%%%%%%%%%%%%%%%%%%

\newpage

\section{The theory of filtrations, standardness, and nonstandardness}

One of the ideas of this article is to show the connection between the asymptotic theory of graded graphs and the theory of filtrations, or decreasing sequences of $\sigma$-algebras.
In this section, we briefly describe the main facts of the latter theory.
It started (see \cite{V68,V70}) with the definition of dyadic decreasing sequences of measurable partitions (= dyadic filtrations) and two fundamental facts: the lacunary theorem for ergodic dyadic filtrations and the standardness criterion for them (= isomorphism with Bernoulli filtrations). During the 1990s, several important papers and generalizations appeared; we will focus our attention only on the notion of standardness in the whole generality and explain the connection with the standardness of graded equipped graphs. In the previous section, we defined standardness for central measures; the notion was inspired by the measure-theoretic standardness for semihomogeneous filtrations (for example, dyadic filtrations)
introduced in  the 1970s  (see \cite{V70,V73,V94}).

\subsection{Filtrations in measure theory and in the theory of Borel spaces}

We bring together several definitions and preliminary nontrivial facts about filtrations.
A decreasing filtration (hereafter called just a filtration) is a decreasing sequence
$${\mathfrak A}_0 \supset {\frak A}_1 \supset {\frak A}_2\supset \cdots$$
of $\sigma$-algebras of a standard  measure space
 $(X,\mu,\mathfrak A)$ with a continuous measure~$\mu$. Here ${\frak A}_0=\frak A$ coincides with the $\sigma$-algebra of all measurable sets.

Filtrations arise

$\bullet$ in the theory of random processes (stationary or not), as the sequences of ``pasts'' (or ``futures'');

$\bullet$ in the theory of dynamical systems, as filtrations generated by orbits of periodic approximations of group actions;

$\bullet$ in statistical physics, as filtrations of families of configurations coinciding outside some volume;

$\bullet$ and, finally and most importantly,
in the theory of $C^*$-algebras and combinatorics, as  tail filtrations of the path spaces  of equipped $\mathbb N$-graded locally finite graphs.

The problem of  classification of filtrations in the category of measure spaces or other categories is deep and quite topical.

We will consider filtrations either in a standard separable Borel space, as

$\bullet$ the tail filtration in the path space $T(\Gamma)$ of an equipped graded graph~$\Gamma$;

or in the standard separable measure space (Lebesgue space), as

$\bullet$ the filtration of the ``pasts'' of a discrete time random process  $\{\xi_n\}$, $-n\in {\Bbb N}$, in the space of realizations of this process.

Recall that any $\sigma$-algebra in a standard measure space is determined by a measurable partition of the measure space, which is in turn determined by a system of conditional measures (canonical system of measures in the sense of Rokhlin):
the system of conditional measures determines the partition, and hence the $\sigma$-algebra, up to isomorphism.
Thus a filtration of $\sigma$-algebras gives rise to an infinite decreasing sequence of measurable partitions
 $\{\xi_n; n\leq 0\}$, which we will use in what follows.

The partition $\xi_0$ corresponding to the whole $\sigma$-algebra ${\frak A}_0$ is the partition into singletons.

In terms of functional analysis, a filtration can also be given by a decreasing sequence of subalgebras of
 $L^{\infty}$ type in the space $L^{\infty}_{\mu}(X)$.
  From this point of view, we discuss problems concerning decreasing sequences of subalgebras of a given algebra.

   A filtration determines a limiting equivalence relation on the measure space (i.e., in general, a nonmeasurable partition) and gives rise, in a canonical way, to a von Neumann algebra, but here we will not discuss these relations.

In what follows, we assume that almost all elements of all partitions  $\xi_n$, $n\leq 0$, are finite, and thus they are finite spaces equipped with (conditional) measures; hence the conditional measure on every element of the partition is determined by a finite-dimensional probability vector.

If all conditional measures on almost all elements of the partition  $\xi_n$ coincide and are uniform, with the number of points in the elements equal to $r_n$, then the filtration is called {\it homogeneous $r_n$-adic} (in particular, {\it dyadic} if  $r_n=2^n$); if the conditional measures are uniform, but the number of points in different elements can be different, then the filtration is called {\it semi-homogeneous}; this is the most interesting and important case. It corresponds to central measures on  path spaces of Bratteli diagrams. But the case of dyadic filtrations already contains all difficulties of the general theory. A specific case is the study of continuous filtrations, for which all conditional measures of all quotient partitions
 $\xi_n/\xi_{n-1}$, $n=1,2, \dots$, are continuous; here we do not consider this case, but the main methods described below apply to it, too.

Let us impose an additional finiteness condition:   {\it  a filtration is said to be of finite type if not only the elements of all partitions are finite, but also the collections of the probability vectors of conditional measures corresponding to the elements of the partition $\xi_n$ for every
 $n$ are finite.} In other words, the collection of all elements of the partition $\xi_n$  can be divided into finitely many subsets so that in each subset the vectors of conditional measures coincide. By the very definition, the class of such filtrations is invariant under measure-preserving transformations.

The following assertion holds.

\begin{statement}
An arbitrary  finite type filtration is isomorphic to a filtration corresponding to a Markov chain with finite state sets.
\end{statement}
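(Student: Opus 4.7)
The plan is to realize the finite-type filtration as the tail filtration of the path space of a Bratteli diagram in which every level $\Gamma_n$ is a finite set, and then apply the Markov interpretation from Section~2.2.1 to obtain a (time-inhomogeneous) Markov chain with finite state sets at every moment.

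First I would define, by induction on $n$, the \emph{enriched type} $\tau_n(C)$ of an element $C \in \xi_n$: for $n = 0$ it is trivial, and for $n \geq 1$ I fix (by measurable selection) a Borel labeling $1, 2, \dots, k_C$ of the $\xi_{n-1}$-subclasses $C_1, \dots, C_{k_C}$ of each $C$ and set $\tau_n(C) = \bigl(p_C, (\tau_{n-1}(C_i))_{i=1}^{k_C}\bigr)$, where $p_C$ is the conditional probability vector on $C$. By the finite-type hypothesis there are finitely many possible $p_C$, and by the inductive hypothesis finitely many possible sub-types, so the set $\Gamma_n$ of enriched types at level $n$ is finite.

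Using these enriched types I would build a graded graph $\Gamma$ with vertex set $\Gamma_n$ at level $n$: for every $v \in \Gamma_n$ and every labeled position $i \in \{1, \dots, k_v\}$ inside a class of type $v$, I draw a (multi-)edge connecting $v$ to the enriched type $u \in \Gamma_{n-1}$ of the $i$-th subclass, and I weight it by the conditional mass of that subclass. These data depend only on $v$, so the Bratteli diagram is well-defined with all $\Gamma_n$ finite. The chosen Borel labelings at all levels combine into a Borel map $\Phi \colon X \to T(\Gamma)$ sending $x$ to the path whose $n$-th edge is the edge from $\tau_{n-1}(x)$ to $\tau_n(x)$ labeled by the position of $x$'s $\xi_{n-1}$-subclass inside its $\xi_n$-class.

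It then remains to verify that $\Phi$ intertwines the filtrations and that $\Phi_*\mu$ is a Markov measure. Two points $x,y$ lie in the same $\xi_n$-class if and only if their paths agree from level $n$ onwards, so $\Phi$ maps $\{\mathfrak{A}_n\}$ onto the tail filtration of $T(\Gamma)$, and the sequence of labels reconstructs $x$ uniquely on each ergodic component of $\{\mathfrak{A}_n\}$. The pushforward $\Phi_*\mu$ has, by construction, the prescribed cotransition probabilities; the Markov property of the associated chain, whose state at time $n$ is the pair of the enriched type $\tau_n$ and the label of $x$'s $\xi_n$-subclass in $\xi_{n+1}$, is essentially tautological from the recursive definition, since the enriched type at level $n$ is set up precisely so that the conditional distribution of $x$'s position inside its $\xi_n$-class depends only on the type and is therefore independent of the super-class structure. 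The main obstacle is not the finiteness of the vertex sets but organizing the compatible Borel labeling of subclasses at every level so that the resulting pushforward is genuinely Markov; this is supplied by Rokhlin's measurable-selection theorem for the quotient pairs $\xi_{n-1} \prec \xi_n$ in a standard measure space, after which the rotation of Section~2.2.1 delivers the required Markov chain on the finite state spaces $\Gamma_n$.
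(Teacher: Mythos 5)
Your overall strategy --- recursively manufacturing finitely many ``types'' of elements of $\xi_n$ out of the finite-type hypothesis and reading the filtration off the resulting Bratteli diagram --- is the same as the paper's, and your verification of the Markov property (the enriched type of a $\xi_n$-class determines the conditional law of the entire labelled tree of partitions below it) is sound. The gap is in the step where you claim that $\Phi$ carries $\{\mathfrak{A}_n\}$ \emph{onto} the tail filtration, i.e.\ that $\sigma(y_m,\ m\ge n)=\mathfrak{A}_n$, where $y_m$ is your state at time $m$. Your states record only the enriched type of the $\xi_m$-class and its label inside the $\xi_{m+1}$-class, and in general these do not generate $\mathfrak{A}_n$. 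Take $X=\{0,1\}^{\mathbb N}\times\{a,b\}$ with the product of a Bernoulli measure and the uniform measure, and declare two points $\xi_n$-equivalent iff they share the second coordinate and all $\{0,1\}$-coordinates beyond $n$: this is a finite-type (indeed homogeneous dyadic) filtration, every class at every level has the same enriched type, the labels recover only the $\{0,1\}$-coordinates, and $\Phi$ is two-to-one; the pushforward is the standard dyadic Bernoulli filtration, whose tail is trivial, so it is \emph{not} isomorphic to the original. Your fallback ``reconstructs $x$ uniquely on each ergodic component'' does not repair this: the statement concerns arbitrary finite-type filtrations, and in any case injectivity on an ergodic component is not automatic either, since it depends on the choice of the Borel labelings --- relabel the two $\xi_n$-subclasses of each $\xi_{n+1}$-class in the plain dyadic Bernoulli filtration according to whether $\epsilon_{n+1}$ agrees with $\epsilon_{n+2}$, and the label sequence becomes $(\epsilon_{n+1}\oplus\epsilon_{n+2})_{n}$, again a two-to-one map.

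This is precisely the point the paper's proof addresses with the clause ``subdividing these subsets if necessary to obtain a basis of the original space'': the finite state set at time $n$ must be enlarged beyond the bare type classes by further finite $\mathfrak{A}_n$-measurable partitions, chosen recursively so that the states at times $\ge n$ genuinely generate $\mathfrak{A}_n$ (for instance, absorbing step by step a basis of the tail $\sigma$-algebra, which may even be continuous). Each such refinement is finite, so the state sets stay finite and the Markov property can be preserved, but this enlargement is an essential part of the construction rather than a technicality, and your proposal omits it.
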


The Markov filtration corresponding to a Markov process with finite state sets is, obviously, a  finite type filtration. Conversely, an arbitrary  finite type filtration can be realized as a filtration corresponding to a Markov chain with finite space sets, by choosing these sets by recursion on $n$, starting from the subsets mentioned in the definition of finiteness and subdividing these subsets if necessary to obtain a basis of the original space. Note that the filtration generated by a Markov chain with arbitrary state sets can be isomorphic to the filtration generated by a chain with finite state sets.

In what follows, we deal with  finite type filtrations generated by Markov chains.

It is not difficult to prove the following assertions.

\begin{statement}[Universality of tail filtrations]

{\rm1.} Every discrete filtration in a Lebesgue space is isomorphic to the tail filtration of an equipped graded graph with some system of cotransition probabilities.

{\rm2.} Every semihomogeneous filtration is isomorphic to the
tail filtration of a graded graph $\Gamma$ equipped with a central measure.
\end{statement}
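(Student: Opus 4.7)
My approach is to transcribe the Markov chain realization supplied by the preceding proposition directly into an equipped graded graph. Given a finite-type discrete filtration $\{\xi_n\}$, that proposition produces a Markov chain with finite state sets $S_n$, transition kernels $p_n$, and one-dimensional marginals $\pi_n$, whose filtration of pasts is measurably isomorphic to $\{\xi_n\}$. I will simply encode this chain as the vertex-and-edge data of a Bratteli diagram.

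For Part~1, I construct $\Gamma$ by setting $\Gamma_0=\{\emptyset\}$, $\Gamma_n=S_n$ for $n\ge 1$, and declaring $u\in\Gamma_{n-1}$ to be a predecessor of $v\in\Gamma_n$ precisely when the corresponding backward transition probability is strictly positive; equip the edge with the cotransition
$$\lambda_v^u \;=\; \Pr\{\text{previous state}=u\mid\text{current state}=v\}\;=\;\frac{p_n(v\mid u)\,\pi_{n-1}(u)}{\pi_n(v)}.$$
The map sending a trajectory of the chain to its sequence of states is a Borel isomorphism onto $T(\Gamma)$; it carries the law of the chain to the unique Markov measure on $T(\Gamma)$ with cotransitions $\Lambda$ and initial distribution $\delta_\emptyset$. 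Two trajectories coincide outside a finite initial window iff their images in $T(\Gamma)$ are tail-equivalent at the corresponding level, so $\xi_n$ transports exactly to the level-$n$ tail partition of $T(\Gamma)$, establishing Part~1.

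For Part~2, assume further that $\{\xi_n\}$ is semihomogeneous. Under the isomorphism above, the $\xi_n$-block indexed by a vertex $v\in\Gamma_n$ is in bijection with the finite set of paths from $\emptyset$ to $v$, whose cardinality is $\dim(v)$. Semihomogeneity says the conditional measure on this block is uniform, so the probability of sitting in the sub-block associated with a predecessor $u\prec v$ equals the fraction of paths through $u$, which is $\dim(u)/\dim(v)$. But this probability is exactly $\lambda_v^u$, so
$$\lambda_v^u \;=\; \frac{\dim(u)}{\sum_{w\prec v}\dim(w)} \;=\; \frac{\dim(u)}{\dim(v)},$$
which is the canonical $\Lambda$-structure producing central measures recalled in Section~2.2. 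Hence the Markov measure obtained in Part~1 is automatically central.

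The main obstacle is the cosmetic check that the raw object deserves to be called a graded graph in the sense of Section~2: each non-root vertex must have at least one predecessor and one successor, and edges should be simple. One handles this by pruning states that are unreachable from $\emptyset$ or from which the chain cannot continue, and --- in the semihomogeneous case, when a single Markov state must witness several distinct ``positions'' in a uniformly distributed block --- either by collapsing states with identical joint distribution of all future variables (a sufficient-statistic reduction) or by invoking the footnote in Section~2.1 permitting multi-edges. Once this bookkeeping is done, the measurable identification of $\xi_n$ with the level-$n$ tail partition of $T(\Gamma)$ holds $\bmod\,0$ by the uniqueness of the conditional-measure decomposition in a Lebesgue space.
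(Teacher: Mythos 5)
Your construction is exactly the route the paper intends: the text gives no written proof beyond ``it is not difficult to prove,'' but it prefaces the statement with the realization of (finite-type) discrete filtrations as Markov chains with finite state sets and with the identification of tail filtrations and filtrations of pasts, and your proposal simply transcribes that chain into a Bratteli diagram equipped with the backward transition probabilities, deriving $\lambda_v^u=\dim(u)/\dim(v)$ from uniformity of conditional measures in the semihomogeneous case. The bookkeeping you flag at the end is real but correctly resolved in the way the paper itself anticipates (its footnote notes that multiple edges are needed precisely ``in the framework of general filtration theory''), so the argument is sound within the finite-type setting the paper works in.
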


As was already mentioned, we can restrict ourselves to one-sided Markov chains (in general, nonstationary).

\begin{definition}
{\rm1.} Two filtrations  ${\frak A}_n$, $n \in \Bbb N$, and  ${\frak A}'_n$, $n \in \Bbb N$, of a standard measure space $(X,\mu)$ are finitely isomorphic if for every $n$ there exists an automorphism $T_n$ of  $(X,\mu)$ such that
$$T_n({\frak A}_k)={\frak A}'_k, \qquad k=1,\dots, n.$$

{\rm2.} A filtration is called ergodic (or quasi-regular, Kolmogorov, satisfying the
zero--one law) if the intersection of the $\sigma$-algebras is trivial:
$$\bigcap_n {\frak A}_n
={\frak N},$$
where ${\frak N}$ is the trivial $\sigma$-algebra.
\end{definition}

It is clear that the ergodicity of a filtration is not an invariant of finite equivalence, because ergodic and nonergodic filtrations can be finitely isomorphic. For example, obviously,  any two dyadic filtrations are finitely isomorphic.
The crucial fact \cite{V70} is the existence of a continuum of ergodic dyadic filtrations that are not mutually isomorphic.

The fundamental problem is to classify (or to give efficient invariants up to automorphism of the space) the filtrations of  a given class of finite isomorphism.

Our goal is to introduce a simplest class of filtrations
such  that every ergodic filtration is finitely isomorphic to a filtration of this class. We called
 such filtrations {\it standard}. This implicitly determines the class of Markov processes or graded graphs for which the tail filtration is standard.

\subsection{Weak standardness}

We define a property of  filtrations that is a very natural step to the general notion of standardness. For  homogeneous filtrations, both notions coincide.

Consider an arbitrary Markov chain $\{x_n\}_{n \geq 0}$ with finite state sets and denote by $X$ the space of all its realizations, which is a general Markov compactum (in particular, it may be nonstationary). We will define a sequence of semimetrics
 $\rho_0=\rho,\rho_1,\rho_2, \dots $ on the space $X$. Denote by $\nu$
a Markov measure on $X$ and consider the filtration $\{{\frak A}\}_{n>0}$ of (``future'') $\sigma$-algebras. We will study metrics and semimetrics on the space $X$ that agree with the topology. A semimetric $\rho$  on $X$ is called a cylinder semimetric if there is $n$ such that $\rho(y,z)$ depends only on coordinates $y_k, z_k$ with
$k<n$. A metric $\rho$ is called an {\it almost cylinder metric} if it is a limit of cylinder semimetrics:
 $\rho(y,z)=\lim \rho_n(y,z)$. Clearly, the topology determined by an almost cylinder metric coincides with the topology of the compact space $X$.

We fix an almost cylinder metric or semimetric $\rho$ and use the Markov filtration in order to define a sequence of {\it transferred metrics}.

\begin{definition} Let $(X,\rho,\mu)$ be a (semi)metric measure space and $\xi$ be a measurable partition of $X$. Then on the quotient space
 $(X_{\xi},\mu_{\xi})$  there is a canonically defined semimetric $\rho_{\xi}$:
\begin{equation}\label{(Proj)}
\rho_{\xi}(x,y)\equiv K_{\rho}(\mu^x,\mu^y),
\end{equation}
where $x,y \in X$ and $\mu^x,\mu^y$ are the conditional measures of the partition $\xi$ on the elements containing the points $x,y$, respectively.
\end{definition}

Here $K_{\rho}(\cdot,\cdot)$ is the Kantorovich metric on the simplex of measures on the metric space
 $(X,\rho)$; recall the definition of this metric: the distance between two probability measures $\alpha_1,\alpha_2$ on a metric compact space $(Z,r)$ is defined as
\begin{equation}\label{K}
   K_r(\alpha_1,\alpha_2)=\inf_{\psi \in \Psi}\int_Z\int_Z
  r(a,b)\,d\psi(a,b),
\end{equation}
where  $\psi \in \Psi$ runs over the set of all measures on the space $Z\times Z$ with marginal (= coordinate) projections  $\alpha_1,\alpha_2$. One often says that measures $\psi$ are  ``couplings'' for the measures $\alpha_1,\alpha_2$.
Thus $\Psi$ is the set of all couplings.

Note that if $\rho$ is a metric rather than a semimetric, then
$K_{\rho}$ is a metric, too.

Let us return to our construction.  Successively apply the operation of transferring a semimetric to the spaces $(X/\xi_k\equiv
 X_{\xi_k},\rho_{k},\mu_{\xi_k})$ and partitions
 $\xi_{k+1}/\xi_k$, obtaining metrics $\rho_{k+1}$ on the spaces $X_{\xi_{k+1}}$,  $k=0,1,\dots$, which can be regarded as semimetrics on the original space $(X,\mu)$.

\begin{definition}
 A filtration is called weakly standard if it is ergodic and for some initial admissible\footnote{The notion of an {\it admissible metric on a measure space} was introduced by the author \cite{MNU} and studied in \cite{VPZ}. An admissible triple is a triple $(X,\mu,\rho)$ where $(X,\mu)$ is a standard  Lebesgue space and $\rho$ is a measurable separable (semi)metric on $X$. In this case, $\rho$ is called an admissible (semi)metric. For details, see \cite{VPZ}. In the general case of a filtration in a measure space, it is useful, and even necessary, to use the notion of an admissible metric.} metric
 $\rho=\rho_0$ on the space $(X, \mu)$, the sequence of semimetrics $\rho_n$, $n>0$, satisfies the condition
 \begin{equation}\label{(S)}
  \lim_{n\to\infty}\int_X\int_X \rho_n(x,y)\,d\mu(x)\mu(y) = 0;
\end{equation}
in other words, the sequence of semimetric measure spaces
 $(X,\rho_n)$ collapses to the single-point measure space as $n \to \infty$.
 \end{definition}

The main problem is to give rough metric invariants of filtrations up to measure-theoretic (or Borel)
equivalence. For instance, to distinguish filtrations from the class of filtrations  finitely isomorphic to ergodic homogeneous filtrations (``asymptotically homogeneous filtrations'').

\begin{definition}
{\rm 1.} A filtration in a Lebesgue space is called a \textbf{filtration of product type} if it is
metrically (i.e., in the sense of measure-preserving isomorphisms $\bmod 0$) isomorphic  to the filtration of  pasts of a Bernoulli sequence of random variables: ${\frak A}_n$ is
the $\sigma$-algebra generated by the variables $\omega_r$, $n>r$, where $\{\omega_n\}_{n \in \Bbb N}$ are independent
random variables, each taking finite values.

{\rm 2.} A homogeneous weakly standard filtration is a filtration isomorphic to a homogeneous filtration of product type. A dyadic filtration of product type has the form ${\frak A}_n$, $n=0,1, \dots$, where ${\frak A}_n$ is the $\sigma$-algebra of  measurable sets on $[0,1]$ with the Lebesgue measure depending on digits $\epsilon_r(x)$ with $r>n$ in the dyadic decomposition of $x$: $$x=\sum_{r=1}^{\infty} \frac{\epsilon_r(x)}{2^r}.$$
So, a standard dyadic filtration is a filtration  isomorphic to this example.
\end{definition}

\subsection{The definition of standardness}

We will define the notion of standardness. The main definition will be similar to the definition of weak standardness  from the previous section, but now we will use a more complicated procedure of iterating a metric. For a semihomogeneous filtration, the new definition is equivalent to the previous one.

\subsubsection{How to measure the distance between finite filtrations}

We proceed to describe our main technical tool, the iteration of a metric with respect to a given measure; it allows us to introduce a general notion of standard filtrations, which constitute a fundamental class of filtrations. In the previous section, we defined weakly standard filtrations for the tail filtrations of some graded graphs. We defined the intrinsic metric, which is the result of a procedure of iterating semimetrics.  But the definition of this section gives a more general iteration procedure, as well as a more general notion of standardness.

We start with some preparations for the main definitions.
Assume that we have two finite filtrations $\omega^1$ and $\omega^2$  in given spaces $X^1$ and $X^2$, respectively, $|X_i|=N_i$, $i=1,2$ (here $|A|$ is the number of points in a set $A$),  which are finite decreasing sequences of partitions (or finite filtrations) of $X^i$, $i=1,2$: $\omega^i=\{\xi_k^i\}_{k=1}^n$, $ \xi^i_0\succ \xi^i_1\succ \cdots \succ \xi^i_n$; here $\xi^i_0$ is the partition of $X^i$ into singletons and $\xi^i_n$ is the trivial partition of $X^i$. We will write
  $x\sim_k x'$ (respectively, $x\nsim_k x'$)  if $x$ and $x'$ belong to the same element (respectively, different elements) of
  the partition $\xi_k$, $k=1, \dots, n$.

  It is helpful to regard a finite filtration on a finite measure space as a finite tree with labels; or, more exactly, a {\it tree of partitions}, or a {\it tree with a set of cotransition probabilities}, see Figure~\ref{fig:new}.

  \begin{figure}[h!]
  \includegraphics[scale=1.2]{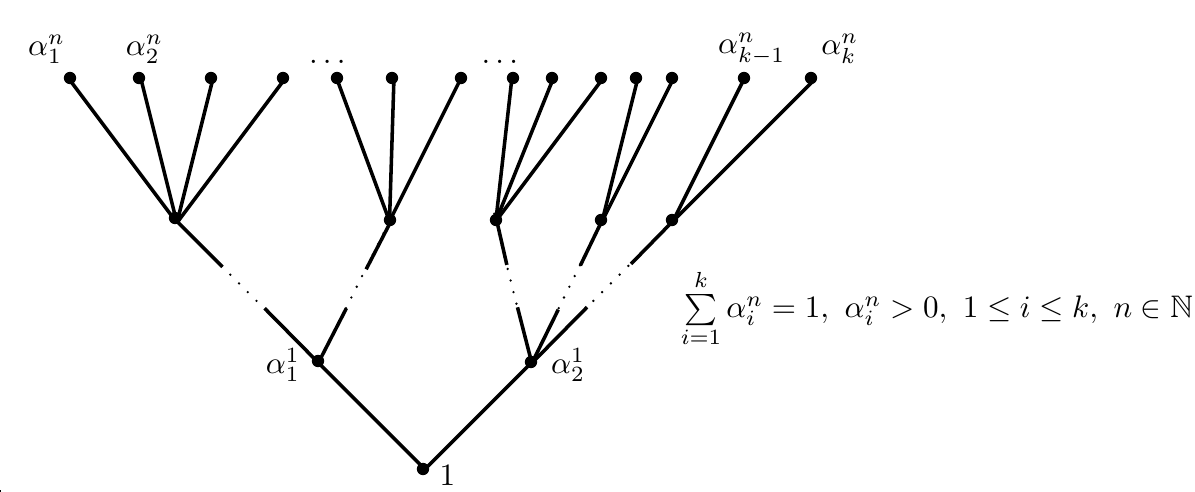}
\caption{Tree of partitions.}
\label{fig:new}
\end{figure}

Let $\mu^1$ and $\mu^2$ be probability measures  on $X^1$ and $X^2$, respectively, without points of zero measure.
  We regard an $N_1\times N_2$  matrix $$\psi=\{\psi_{x,y}: x\in X_1,\; y\in X_2\}$$ with nonnegative entries  as a correspondence, or a coupling (or a multi-valued map, or a polymorphism) between $X^1$ and $X^2$. {\it We consider  bistochastic coupling matrices $\psi$ with respect to the measures $\mu^1, \mu^2$: $\sum_x \psi(x,y)=\mu^2(y)$, $\sum_y \psi(x,y)=\mu^1(x)$.}

  We say that a coupling $\psi$ maps the filtration $\omega^1$ to the filtration $\omega^2$ if the following holds:
 if $\psi_{x,y}>0$, then for any $x'\sim_k x$ there exists $y'\sim_k y$ such that $\psi_{x', y'}>0$,  and
 for any $y''\sim_k y$ there exists $x''\sim_k y$ such that $\psi_{x'', y''}>0$. The simplest case where this condition is fulfilled is where $\psi$ is a bijection between $X^1$ and $X^2$ that is an isomorphism between the filtrations $\omega^1$ and~$\omega^2$. Denote the set of all such couplings by $\Psi(\omega^1,\omega^2)$.

It is easy to prove that $\Psi(\omega^1,\omega^2)$ is never empty, because the direct product $\mu^1\otimes \mu^2$  of the measures
provides a required coupling: $\psi(x,y)=\mu^1(x)\mu^2(y)$.

  Finally, assume that we have a matrix of distances  $\rho(x,y)$ for $x\in X^1$, $y\in X^2$, with $\rho(x,y)\geq 0$.
  An important example: let $f_1$ and $f_2$ be functions with values $0$ and $1$ defined  on $X^1$ and $X^2$, respectively, and let
  $\rho(x,y)=|f_1(x)-f_2(y)|$ be the distance between points of $X^1$ and $X^2$.

  Now we are ready to define the distance between two finite filtrations $\omega^1$ and $\omega^2$ on finite measure spaces  $(X^1,\mu^1)$ and $(X,\mu^2)$ using a distance $\rho$ between points of these spaces. It is defined by the following formula:
$$ r^{\rho}((X^1,\omega^1,\mu^1),(X^2,\omega^2,\mu^2))
=\inf_{\psi\in \Psi(\omega^1,\omega^2)} \sum_{x\in X^1,\,y\in X^2}  \rho(x,y)\psi(x,y).$$
It looks similar to the formula for the Kantorovich metric, but there is a crucial difference, namely, we use only those couplings that agree with  the filtrations.

 Using this definition, we can define a procedure of iterating a metric and the notion of standardness.

\subsubsection{Standardness}

Let $(X, \mu,\rho)$ be an admissible triple where $\rho$ is a metric (or a semimetric), and let $\{\xi_n\}_{n \in \Bbb N}$
be a discrete filtration on the space $X$.\footnote{The previous section allows one to give a complete metric invariant of finite filtrations in a standard measure space: it is a metric invariant
of the measurable function on the space $X/\xi_n$ with values in the space of $n$-trees of partitions that associates to an element~$C$ of $\xi_n$ the $n$-tree of partitions corresponding to the restriction of the filtration to this element.
This is a natural generalization of Rokhlin's invariants of a single measurable partition: $m_1(C)\geq m_2(C) \geq \dots \ge0$,
$\sum m_k(C)\le1$, where $m_k(C)$ are the atomic parts of the conditional measures.}
Let $C$ be an element of the partition $\xi_n$ and $\omega^C$ be the restriction to $C$  of the finite filtration $\xi_1,\xi_2, \dots, \xi_n$; let $\mu^C$ be the conditional measure on $C$ as an element of  $\xi_n$.

\begin{definition}
Define a sequence of semimetrics as follows:
$\bar\rho_0=\rho$, and $$\bar\rho_{n+1}(x,y)=\int\int_{X_{\xi_n}\times X_{\xi_n}}r^{\bar\rho_n}((C, \omega^C,\mu^C), (C',\omega^{C'},\mu^{C'}))\,d\mu_{\xi_n}(C)d\mu_{\xi_n}(C').$$
A filtration $\{{\frak A}_n\}_{n\in \Bbb N}$ is called standard
if
\begin{eqnarray}
\lim_{n\to \infty} \bar\rho_n=0
\label{standcrit}
\end{eqnarray}
for any initial metric $\rho$.
\end{definition}

The condition~\eqref{standcrit} is called the standardness criterion.

\begin{remark}{\rm
1. The choice of a metric in the definition is irrelevant:
if the condition is satisfied for some metric, then it is satisfied for all metrics (see below).

2. Standarness is not equivalent to weak standardness. See an example in the next section. It is obvious from the definition of the metrics that $\rho_n \leq \bar\rho_n$ and, in general, the inequality is strict. We may call  $\rho_n$ the weak intrinsic metric. The nature of the difference between weak standardness and standardness lies in the properties of the cocycle of the filtration (see Section~1); if the cocycle is equal to one (a homogeneous filtration, or a central measure), then both notions of standardness coincide. This difference between  $\rho_n$ and $\bar\rho_n$ is similar to the effect of the Bellman principle in dynamic programming: the optimal global strategy (in our case, the global coupling) can be or not be the result of a sequence  of optimal couplings.}
\end{remark}

\subsection{The properties of standard and weakly standard filtrations}

Many properties of filtrations are common for both notions of
standardness; for this reason, we use the word ``standard"
 when an assertion is true for both notions.

  The monotonicity property of the operation of transferring a metric and the continuity of this operation with respect to the pointwise convergence of metrics immediately imply that if the standardness condition (for either of the notions) is satisfied for a given initial metric, then it is satisfied for any initial metric. Indeed, it follows from the monotonicity and linearity  that the condition holds for any cylinder semimetric, and then one should use the fact that any almost cylinder metric is a limit of cylinder semimetrics.\footnote{The same argument allows one to infer that a filtration is standard with respect to an arbitrary admissible semimetric if it is standard with respect to a single admissible metric.}

In other words, the following theorem holds.

\begin{theorem}
The property of being a (weakly) standard Markov filtration is invariant under the group of all measure-preserving transformations and  does not depend on the choice of the initial metric.
\end{theorem}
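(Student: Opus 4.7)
\medskip

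\textbf{Proof plan.}

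The plan is to deal with the two assertions separately, treating the invariance under measure-preserving transformations as essentially a naturality statement, and the independence of the initial metric as the substantive content that rests on the monotonicity and linearity properties of the Kantorovich transfer announced earlier in the paper.

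For the first assertion I would note that the construction of $\rho_n$ (respectively $\bar\rho_n$) takes as its only inputs the filtered measure space $(X,\mu,\{\xi_n\})$, the initial admissible (semi)metric $\rho$, and the canonically defined conditional measures on the elements of each $\xi_n$; the transfer operation $\rho \mapsto K_\rho$ and the coupling set $\Psi(\omega^1,\omega^2)$ are themselves defined in purely measure-theoretic language. Consequently, if $T:(X,\mu,\{\xi_n\}) \to (X',\mu',\{\xi'_n\})$ is an isomorphism $\bmod\, 0$ of filtered Lebesgue spaces and we set $\rho' = \rho\circ(T^{-1}\!\times\! T^{-1})$, an obvious induction on $n$ yields $\rho'_n = \rho_n\circ(T^{-1}\!\times\! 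T^{-1})$ (resp.\ $\bar\rho'_n$), so that $\iint \rho'_n\, d\mu' d\mu' = \iint \rho_n\, d\mu d\mu$. Invariance of the standardness criterion \eqref{standcrit} follows immediately.

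For the second assertion I would proceed in three steps, exactly along the lines indicated in the paragraph preceding the theorem. First, monotonicity: if $\rho\le c\rho'$ pointwise then $K_\rho \le cK_{\rho'}$ (the fact cited from \cite{2014}), and an induction shows $\rho_n \le c\rho'_n$ for every $n$; hence standardness for $\rho'$ propagates to any $\rho$ dominated by a multiple of $\rho'$. Second, linearity: $K_{a\rho_1+b\rho_2} = aK_{\rho_1}+bK_{\rho_2}$ is preserved by the iteration, so the class of admissible (semi)metrics for which \eqref{standcrit} holds is a positive cone closed under monotone domination. Third, using that the transfer operation is positively linear and bounded above by the monotonicity bound, I would verify the standardness criterion directly for the class of cylinder semimetrics: a cylinder semimetric depends only on finitely many coordinates, so combining the monotonicity and linearity bounds with the reverse-martingale convergence of conditional measures to the global measure (which is exactly ergodicity of the tail filtration) yields $\iint\rho_n \,d\mu d\mu \to 0$.

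Finally, I would use the fact that every admissible (semi)metric on $(X,\mu)$ is a pointwise monotone limit of cylinder semimetrics (this is the content of being \emph{almost cylinder}, in the terminology used in the paper). By the monotonicity of $\rho \mapsto \rho_n$ in the initial (semi)metric, monotone convergence of $\sigma^{(k)} \uparrow \rho$ gives $\sigma^{(k)}_n \uparrow \rho_n$ for each fixed $n$, and then a straightforward monotone-convergence argument on the double integral transfers standardness from cylinder semimetrics to arbitrary admissible ones. The main obstacle, and the place where one must be careful, is the third step above: verifying standardness for cylinder semimetrics requires combining the combinatorial/probabilistic content (ergodicity of the filtration and convergence of conditional measures along $\xi_n$) with the bookkeeping that propagates the bound through the iteration; in particular, one must ensure that the dominating semimetrics produced by monotonicity remain admissible and that no mass is lost when passing to the limit inside the Kantorovich functional.
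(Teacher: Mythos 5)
Your first assertion (invariance under measure-preserving isomorphisms) is handled correctly: the iteration of the metric uses only the filtered measure space, the conditional measures, and the coupling sets, all of which are transported by an isomorphism $\bmod\,0$, so the criterion is manifestly invariant once independence of the initial metric is known. The gap is in your third step. You propose to ``verify the standardness criterion directly for the class of cylinder semimetrics'' by combining monotonicity and linearity with reverse-martingale convergence of the conditional measures, i.e.\ with ergodicity of the tail filtration. If that worked, then together with your final approximation step it would show that \emph{every} ergodic filtration is standard, which is false and indeed contradicts one of the central points of the theory: there is a continuum of pairwise nonisomorphic ergodic dyadic filtrations, and the random walk over a Bernoulli action of a free group gives an ergodic homogeneous filtration that is not standard. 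A completely explicit counterexample to your step is the stationary Markov chain with transition matrix $\left(\begin{smallmatrix} p & q\\ q & p\end{smallmatrix}\right)$, $p\ne q$: the filtration is ergodic, the first-coordinate semimetric $\rho(x,y)=|x_0-y_0|$ is a cylinder semimetric, and yet $\bar\rho_n$ evaluated on the two types of elements of $\xi_n$ equals $|p-q|$ for all $n$ and does not tend to zero. Ergodicity only gives the ``non-diagonal'' martingale convergence (conditional distributions of the first $k$ variables, $k$ fixed, given $\mathfrak A_n$); standardness is precisely the diagonal strengthening $k=n$, and it is not a consequence of the zero--one law.

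The intended argument runs in the opposite logical direction: one must \emph{use} the hypothesis that the criterion holds for the given initial metric $\rho_0$. A cylinder semimetric of rank $k$ takes finitely many values and vanishes on pairs agreeing in the first $k$ coordinates, hence is dominated by a constant multiple of $\rho_0$ (the cylinder cells are clopen and at positive mutual $\rho_0$-distance); by the monotonicity of the transfer operation this domination propagates through the iteration, so the vanishing of $\iint (\rho_0)_n\,d\mu\,d\mu$ forces the vanishing for every cylinder semimetric. Linearity extends this to positive combinations, and the passage to arbitrary almost cylinder (admissible) metrics as limits of cylinder semimetrics is then as in your final step. So the structure of your proof is right except at its load-bearing point: the source of the convergence to zero must be the standardness hypothesis for one metric, not ergodicity.
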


It means that if a filtration is realized as the tail filtration of various Markov chains, then all these chains are standard or not simultaneously.

It should be noted that computing the iterations of functions or metrics and their expectations is not an easy task. In order to prove the existence of a nonstandard filtration, we need to check that the limit of the expectation of the metric $\rho_n$ does not vanish for some functions. The first example of a nonstandard filtration was suggested by the author in  \cite{V70} (see also \cite{V73, V94}): it is the filtration of pasts for a random walk over trajectories of a Bernoulli action of a free non-Abelean group. By now there are many such examples. A survey of the  state of the art in this field will be published in a separate paper.

The drawback of the above definition of standardness is that checking condition~\eqref{standcrit} requires computing the iterated metrics. It is desirable to have a criterion that would relate the metric $\rho_n$ directly to the initial metric, skipping the intermediate steps. We present such a criterion; its statement does not involve a Markov realization of a filtration, but whenever necessary we will use special couplings, in contrast to arbitrary couplings used in the definition of the Kantorovich metric. Another difference is that the condition must be satisfied not for one metric, but for all degenerate semimetrics of a special form.

The following observation is convenient for applications.

\begin{statement}
If the standardness (or weak standardness) condition holds for every semimetric of the form
$\rho_f(x,y)=|f(x)-f(y)|$ with functions $f \in L^{\infty}(X,\mu)$ that take finitely many values,
 then it holds for every metric.
\end{statement}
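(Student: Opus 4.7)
The plan is to combine three structural properties of the iteration $\rho \mapsto \bar\rho_n$ recorded earlier in the paper---monotonicity, positive linearity in the metric argument, and $L^\infty$-Lipschitzness in that argument---to reduce standardness for an arbitrary admissible metric first to the case of cylinder metrics on finite quotients, and then to the standing hypothesis via a Kuratowski-type upper bound.

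First I would verify that the set $\mathcal{S}$ of admissible semimetrics satisfying the criterion $\int\!\!\int \bar\rho_n\, d\mu\, d\mu \to 0$ is a downward-closed convex cone. Positive homogeneity $\overline{a\rho}_n = a\bar\rho_n$ handles scaling, linearity $\overline{\rho_1+\rho_2}_n = \bar{\rho_1}_n + \bar{\rho_2}_n$ handles sums, and monotonicity $\rho \le \sigma \Rightarrow \bar\rho_n \le \bar\sigma_n$ pointwise handles downward domination. By hypothesis $\mathcal{S}$ contains every $\rho_f$ with $f$ simple; hence it contains every finite non-negative combination $\sum_k c_k \rho_{f_k}$, and hence every admissible semimetric pointwise bounded above by such a combination.

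Next I would show $\mathcal{S}$ contains every cylinder metric. A cylinder metric $\rho$ factors through a finite measurable quotient $\pi: X \to Y$ as $\rho(x,y) = d(\pi(x), \pi(y))$ for some metric $d$ on the finite set $Y = \{z_1, \dots, z_N\}$. Setting $g_i(u) = d(u, z_i)$, the triangle inequality gives $|g_i(u) - g_i(v)| \le d(u, v)$ with equality at $z_i \in \{u, v\}$, so $d(u, v) = \max_i |g_i(u) - g_i(v)| \le \sum_{i=1}^N |g_i(u) - g_i(v)|$. Pulling back, $\rho(x, y) \le \sum_{i=1}^N |(g_i\circ\pi)(x) - (g_i\circ\pi)(y)|$, a sum of $\rho_f$'s with each $f_i = g_i \circ \pi$ taking finitely many values. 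By the first paragraph, $\rho \in \mathcal{S}$.

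Finally, a general admissible metric $\rho$, which we may assume bounded, is a uniform limit of cylinder semimetrics $\rho_N$. The single-step Kantorovich operation is $L^\infty$-$1$-Lipschitz in its metric argument, since for any probability coupling $\psi$ one has $\left|\int \rho'\, d\psi - \int \rho''\, d\psi\right| \le \|\rho' - \rho''\|_\infty$, and taking infima preserves the bound. By induction on the number of iterations, $\|\bar\rho_n - \bar{\rho_N}_n\|_\infty \le \|\rho - \rho_N\|_\infty$ for every $n$. Integrating and letting $n \to \infty$ with $N$ fixed (using $\rho_N \in \mathcal{S}$ from the second step) yields $\limsup_n \int\!\!\int \bar\rho_n \le \|\rho - \rho_N\|_\infty$; sending $N \to \infty$ forces $\int\!\!\int \bar\rho_n \to 0$, so $\rho \in \mathcal{S}$.

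The main technical obstacle is to verify carefully that the $L^\infty$-Lipschitzness of the single Kantorovich step propagates through the full iteration $\bar\rho_{n+1}(x, y) = \int\!\!\int r^{\bar\rho_n}(C, C')\, d\mu_{\xi_n}(C)\, d\mu_{\xi_n}(C')$ in the weighted-integration form used in the paper, and --- in the general (non-central) case, where couplings are restricted to respect the finite filtrations --- that both the linearity and the Lipschitz bound survive this restriction of the coupling set. Both properties follow from a short induction unwinding the definition, treating the $r^{\bar\rho_n}$-step and the subsequent integration over $X_{\xi_n}$ separately, but the argument must be run in the form that matches the appropriate iteration (standard versus weakly standard), as the Remark after the standardness definition warns.
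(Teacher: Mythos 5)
Your overall route is the one the paper itself sketches (monotonicity plus ``linearity'' of the transfer operation to handle cylinder semimetrics, then approximation of a general metric by cylinder semimetrics), and your second and third steps are sound: the Kuratowski bound $d(u,v)\le\sum_i|g_i(u)-g_i(v)|$ is correct, and the $L^\infty$-$1$-Lipschitz propagation through the iteration is valid, since for any coupling $\psi$ one has $\int\rho'\,d\psi\le\int\rho''\,d\psi+\|\rho'-\rho''\|_\infty$ and this survives the restriction to filtration-respecting couplings (that set is nonempty, the product coupling always belonging to it). The genuine gap is in your first step. The operation $\rho\mapsto\inf_{\psi}\int\rho\,d\psi$ is \emph{superadditive}, not additive, in the cost: the infimum of a sum dominates the sum of the infima, with strict inequality whenever the optimal couplings for $\rho_1$ and $\rho_2$ differ --- and this already happens for two semimetrics of the form $\rho_f$, $\rho_g$ on a four-point space. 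So the inequality your cone argument actually needs, $\overline{(\rho_1+\rho_2)}_n\le\overline{(\rho_1)}_n+\overline{(\rho_2)}_n$, fails in general, and the hypothesis applied to each $\rho_{f_k}$ separately does not control $\overline{(\sum_k c_k\rho_{f_k})}_n$. (The ``linearity'' recorded in Section~3.6 of the paper holds only as an inequality, and in the direction that does not help here; the paper's own one-line sketch leans on it in the same way.)

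The repair is easy and makes the sum decomposition unnecessary. A cylinder semimetric $\rho=d\circ(\pi\times\pi)$ factors through a finite quotient $Y$ with cells $A_1,\dots,A_N$; choose a single finite-valued function $f=\sum_i c_i 1_{A_i}$ with distinct values $c_i$. Then $\rho_f\ge\delta:=\min_{i\ne j}|c_i-c_j|>0$ off the diagonal of $Y$, while $d\le D:=\operatorname{diam}(Y,d)$, so $\rho\le(D/\delta)\,\rho_f$ pointwise. Monotonicity and positive homogeneity alone --- the two properties that genuinely do hold and propagate through the iteration --- now give the criterion for $\rho$ from the hypothesis applied to the one function $f$. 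With this substitution your third step closes the argument for metrics that are uniform limits of cylinder semimetrics; for a merely measurable admissible metric one still has to justify that such an approximation (or an $L^1$ version of it, combined with the bound $\int\int\overline{\sigma}_n\,d\mu\,d\mu\le\int\int\sigma\,d\mu\,d\mu$ obtained from the product coupling) is available, a point the paper also passes over in silence.
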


The statement of the standardness criterion implies the following important property.

\begin{remark}
{\rm If the  standardness (or weak standardness) criterion  is satisfied for a filtration  $\{\xi_n\}_{n>0}$, then it is satisfied for the quotient filtration
$\{\xi_{n+k}/\xi_k\}_{n>0}$ for every $k>0$.}
\end{remark}

Finally, we emphasize that the condition of the criterion is invariant by the very definition, i.e., if it holds for a filtration, then it also holds for any isomorphic filtration. Indeed, it involves only notions related to the filtration and no other notions (e.g., metrics, as in the first statement). Although we formulated the criterion for  discrete type filtrations, it applies with minimal modifications to arbitrary filtrations.

\begin{remark}{\rm
In  particular,
a Markov process (filtration) is standard if and only if  condition~\eqref{standcrit} is satisfied.
} \end{remark}

For dyadic filtrations, this theorem was the main fact of the first
period of the theory of filtrations (see \cite{V70,V73,V73a,V94}). For general filtrations,  it was stated as a definition in
\cite{V94,VGo}.

A detailed proof will be published in the paper mentioned in the Introduction.
Let us specify the standardness criterion for homogeneous filtrations and functions with finitely many values. For clarity, we restrict ourselves to dyadic filtrations and indicator functions of sets. What does the criterion mean in this case?

In this case, an element of the $n$th partition is a set consisting of $2^n$ points equipped with the uniform measure and a structure of a binary tree (of height~$n$), and the restriction
of an indicator function is a $0-1$ vector of dimension~$2^n$. In the case of a uniform measure,   as couplings  we may take not Markov,
but one-to-one maps
(which preserve the uniform measure), that is, elements of the group preserving the tree structure, i.e., automorphisms of the binary tree. Hence the distance between the restrictions of a function to two trees is the distance between two orbits of the group of automorphisms of the tree acting on the vertices of the unit cube of dimension
 $2^n$. Thus, the standardness criterion means that for every $\varepsilon>0$ there is $N$ such that for all elements of the partition $\xi_n$ with $n>N$ from some set of measure $>1-\varepsilon$, the restrictions of the indicator function lie on orbits of the action of the group $D_n$ of automorphisms of the tree for which the (Hamming) distance is less than
$\varepsilon$.
This observation can easily be extended to the case of other filtrations.

The meaning of the standardness criterion is that the restrictions of any function to various elements of the partition have asymptotically  the same behavior with respect to the tree structure (up to automorphisms, or couplings).

\textbf{Hence standardness can be interpreted as an
invariant form of the asymptotic independence property  for filtrations, or as an analog of the independence of a sequence of random variables.}

Historically, the statement of the standardness criterion for homogeneous filtrations preceded the standardness condition~(3) given above. The fact that if the criterion is satisfied then the filtration is standard (in the homogeneous case) was proved by the author (\cite{V70, V73},
see also \cite{V94}), who simultaneously gave the first example of a nonstandard filtration.

The following corollary of the criterion explains our claims from Section~3.7.

\begin{statement}
If a central ergodic measure $\mu$ on the path space  $T(\Gamma)$ of a graded graph $\Gamma$ is nonstandard, then the limit $\lim_n \rho_n(x,y)$ does not exist for $(\mu\times\mu)$-almost all pairs $(x,y)$. If this limit exists, then it must be equal to zero, and the measure (its tail filtration) is standard.
\end{statement}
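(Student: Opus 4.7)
The plan is to prove the contrapositive of the first assertion together with the second assertion in one stroke: I will show that if the pointwise limit $\rho_\infty(x,y):=\lim_n\rho_n(x,y)$ exists for $(\mu\times\mu)$-almost every pair, then $\rho_\infty\equiv0$ a.e.\ and $\mu$ is standard. The first assertion is then its contrapositive.

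First I would establish that $\rho_n(x,y)$ depends only on the tail classes of $x$ and $y$ from level $n+1$ onward. The construction of Section~3.6 expresses $\rho_n(x,y)$ as the iterated Kantorovich distance between measures supported on paths that coincide with $x$ (respectively $y$) past level $n+1$; thus replacing $x$ by any tail-equivalent $x'$ leaves $\rho_n(x,y)$ unchanged for all sufficiently large $n$. Consequently the hypothetical limit $\rho_\infty$ is invariant under the tail equivalence relation in each argument separately. Combining ergodicity of the central measure $\mu$ with Fubini — fix $y$ off a null set, conclude $x\mapsto\rho_\infty(x,y)$ is $\mu$-a.e.\ constant $g(y)$, then apply $\tau$-invariance of $g$ and ergodicity once more — forces $\rho_\infty$ to equal some constant $c\ge 0$ almost everywhere.

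Next, I would normalize the initial admissible metric so that $\rho\le 1$ (permitted by the monotonicity and linearity properties of the Kantorovich construction recorded in Section~3.6), so that all iterated semimetrics satisfy $\rho_n\le 1$. Bounded convergence then gives
\begin{equation*}
\int\!\!\!\int \rho_n\,d(\mu\times\mu) \longrightarrow c.
\end{equation*}
Because for a central measure weak standardness and full standardness coincide (Remark after the standardness criterion in Section~4), $\mu$ is standard if and only if this integral tends to $0$, i.e.\ if and only if $c=0$. Thus once we know $c=0$, standardness follows, and once we know $\mu$ is standard, the constant value $c$ of the a.e.\ limit must be $0$.

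The main obstacle is the step of ruling out $c>0$ when the limit is assumed to exist a.e., since a priori $\rho_n$ could converge a.e.\ to a strictly positive constant. The intended route is to prove unconditionally that $\liminf_n\rho_n(x,y)=0$ for $(\mu\times\mu)$-a.e.\ pair; combined with existence of the pointwise limit this forces $c=0$. The argument uses the ergodic method: for $\mu$-a.e.\ $x$ the tail class $[x]_\tau$ is dense in the Cantor topology of $T(\Gamma)$, so one can choose $x_k\in[x]_\tau$ agreeing with $y$ on increasingly long initial segments; the tail-invariance established in the first step then transfers these approximations into small values of $\rho_{n_k}(x,y)$ along a suitable subsequence $n_k$. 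The delicate ingredient here — that the iterated Kantorovich distance between the uniform conditional measures on two finite trees sharing a long common initial portion is actually small — is the hardest part of the argument and relies essentially on centrality of $\mu$ (the cocycle is trivial, so conditional measures really are uniform and coupling by tree automorphisms is available). Once $\liminf \rho_n=0$ a.e.\ is in hand, the conclusion $c=0$ is immediate, and the bounded-convergence step then delivers standardness, completing the proof of both assertions.
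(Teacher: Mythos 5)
The paper does not actually prove this Proposition: it is announced as a corollary of the standardness criterion and of the limit-shape theorem of Section~3.7, with proofs deferred to a forthcoming article, so your attempt must be judged on its own. The architecture is reasonable and its first two stages are sound: $\rho_n(x,y)$ indeed depends only on the coordinates $x_n,x_{n+1},\dots$ and $y_n,y_{n+1},\dots$ (the initial segments are integrated out against the conditional measures), so the hypothetical limit is tail-invariant in each variable and your Fubini-plus-ergodicity argument correctly forces $\rho_\infty=c$ a.e.; likewise, after normalizing $\rho\le1$, bounded convergence together with the identification of standardness with $\lim_n\int\!\!\int\rho_n\,d(\mu\times\mu)=0$ (legitimate for central measures, where weak standardness and standardness coincide) reduces everything to showing $c=0$. (A small repair: the contrapositive of the first assertion has hypothesis ``the limit exists on a set of positive measure,'' not ``a.e.''; this is bridged by observing that the existence set is tail-invariant in each variable and hence has measure $0$ or $1$.)

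The genuine gap is the step you yourself flag as the hardest: the unconditional claim $\liminf_n\rho_n(x,y)=0$ a.e., and specifically the mechanism you propose for it. Choosing $x_k\in[x]_\tau$ agreeing with $y$ on a long initial segment buys nothing, precisely because of the tail-dependence you established in step one: $\rho_j(x_k,y)$ vanishes only for $j$ below the length $m$ of the common initial segment, whereas the identity $\rho_j(x,y)=\rho_j(x_k,y)$ holds only for $j$ beyond the level $N_k>m$ at which $x_k$ merges back into $x$; the two ranges never overlap, so no smallness of $\rho_n(x,y)$ is transferred for any $n$. More structurally, any argument using only ergodicity and centrality collides with the distinction that is the entire content of nonstandardness: by the extremality criterion of Proposition~\ref{prop5} the \emph{one-shot} Kantorovich distance between the level-$k$ projections of the conditional measures $\mu^{x_n},\mu^{y_n}$ does tend to zero, but the iterated metric $\rho_n$ is only bounded \emph{below} by such quantities (tree-compatible couplings form a smaller class than arbitrary couplings), and it is exactly the inefficiency of iterated couplings that makes measures on $UP$ nonstandard. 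So $\liminf_n\rho_n=0$ a.e.\ is left unsubstantiated (and is not obviously true for nonstandard measures), and with it the exclusion of $c>0$, which is the crux of both assertions; you would need either a genuine proof of that liminf statement or a different route to $c=0$, neither of which is supplied.
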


Examples of nonstandard graphs, for which the sequence of metrics does not converge to the intrinsic metric, were given above: this is, for instance, the graph of unordered pairs (UP) or the graph of ordered pairs (OP).
  These graphs are also related to the notion of a tower of measures, which is a remarkable projective limit (see Figure~\ref{fig:simplices }) on which we can see the difference between standard and nonstandard central measures.

We state without proof one of the main facts, which is
a generalization of the lacunary theorem (\cite{V68}) and was proved
in \cite{2015}.

\begin{theorem}[Lacunary theorem]
For every equipped graph $(\Gamma=\bigcup_n \Gamma_n,\Lambda)$ (respectively, for every projective limit of simplices
 $\lim_n \{\Sigma_n, \{\pi_{n,m}\}_{n,m}\}$),
 one can choose a subsequence of positive integers $n_k$, $k=1,2,\dots $,
 such that the equipped multigraph $\Gamma' = \bigcup_k \Gamma_{n_k}$ obtained by removing all levels between
 $n_k$ and $n_{k+1}$, $k=1,2, \dots$, and preserving all paths connecting them
  (respectively, the projective limit $\lim_k\{\Sigma_{n_k}, \{\pi'_{k,s}\}_{k,s}\}$  with the lumped system of projections $\pi'_{k,s}$, where $\pi'_{k,k+1}=\prod_{i=n_k}^{i=n_{k+1}-1} \pi_{i,i+1}$)
  is standard.
\end{theorem}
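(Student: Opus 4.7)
My plan is to construct the subsequence $\{n_k\}$ inductively via a diagonal argument exploiting the monotonicity of the transferred-metric operation under coarsening. By the earlier observation that standardness need only be verified on semimetrics of the form $\rho_f(x,y)=|f(x)-f(y)|$ for finitely-valued $f$, I fix once and for all a countable family $\mathcal F=\{f_1,f_2,\dots\}$ dense in $L^{\infty}$. The goal is then to produce integers $n_1<n_2<\cdots$ such that the $k$-th iterated transferred metric $\bar\rho_k^{(f_j)}$ of the lumped filtration on the graph $\Gamma'=\bigcup_k \Gamma_{n_k}$ satisfies $\bar\rho_k^{(f_j)}<2^{-k}$ for every $j\le k$. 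Summability immediately gives $\bar\rho_k^{(f_j)}\to 0$ for each fixed $j$, which is the standardness criterion.

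The inductive step is as follows: having picked $n_1<\dots<n_{k-1}$, I choose $n_k>n_{k-1}$ large enough to meet the $k$ inequalities above. Feasibility of this choice rests on the following key claim: with $f$ and $n_1,\dots,n_{k-1}$ held fixed, the $k$-th iterated transferred metric of the lumped filtration using levels $\xi_{n_1}\succ\dots\succ\xi_{n_k}$ tends to zero as $n_k\to\infty$. The intuition is that couplings admissible in the outermost iteration of the lumped filtration must respect the lumped partitions but not the intermediate levels, and as $n_k$ grows the coupling space between randomly chosen elements $C,C'\in\xi_{n_k}$ inside a common element of $\xi_{n_{k-1}}$ enlarges enough that the conditional distributions of $f$ become essentially indistinguishable; Kantorovich-like distances then collapse on average.

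The main obstacle is the recursive dependence of $\bar\rho_k$ on $\bar\rho_{k-1}$: the outer average over pairs $(C,C')\in\xi_{n_k}^2$ is taken of the quantity $r^{\bar\rho_{k-1}}$, which is itself determined by the filtration structure inside $C,C'$ together with the metric $\bar\rho_{k-1}$ already assembled from the previous $k-1$ lumped levels. One must verify that as $n_k$ grows (with the earlier $n_i$ fixed), this inner structure remains uniformly controlled by $\bar\rho_{k-1}$, which was arranged to be small at the previous stage, and is not inflated under the outer iteration. The technical heart of the argument is therefore a uniform contraction-type bound on the transfer operation --- essentially a quantitative form of the monotonicity and linearity properties already invoked in the paper --- coupled with a martingale convergence statement for the conditional expectations of each $f_j$ along the tail filtration, ensuring that the added freedom from the enlarged gap between $\xi_{n_{k-1}}$ and $\xi_{n_k}$ is actually usable. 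Once these ingredients are in place, the diagonal extraction produces the required $\Gamma'$.
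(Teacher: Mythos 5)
The paper does not actually prove this theorem: it is explicitly ``stated without proof'' as a generalization of the lacunary theorem of \cite{V68}, with the proof deferred to \cite{2015}. So there is no in-paper argument to compare against; your outline must be judged on its own. Its architecture --- reduce to countably many finitely-valued test functions, choose $n_k$ inductively so that the $k$-th iterated metric is below $2^{-k}$ for the first $k$ functions, and drive the inductive step by reverse-martingale concentration of conditional distributions --- is indeed the classical skeleton of the lacunary isomorphism theorem, and I believe it can be completed along these lines for the tail filtration of a fixed ergodic measure.

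However, as written there is a genuine gap: the ``key claim'' of your inductive step is asserted, not proved, and it is precisely where all the content of the theorem lives. Three specific points. First, the claim that the $k$-th lumped iterated metric tends to zero as $n_k\to\infty$ requires ergodicity of the filtration (otherwise the conditional distributions given $\xi_{n_k}$ concentrate on the tail $\sigma$-algebra, not on a point, and the expected pairwise distance does not vanish); the theorem as stated assumes no ergodicity and is formulated for the equipped graph and the whole projective limit of simplices, where ``standard'' means \emph{existence} of the limit of the transferred metrics, not its vanishing --- so you are proving the measure-theoretic corollary recorded in the Remark following the theorem rather than the statement itself. Second, convergence of the scalar conditional expectations $E[f_j\mid{\frak A}_n]$ is not the right martingale input: the couplings admitted in $r^{\bar\rho}$ must respect the entire finite tree of partitions $\xi_{n_1}\succ\dots\succ\xi_{n_{k-1}}$ restricted to $C$ and $C'$, so what must concentrate is the conditional distribution, given $\xi_{n_k}$, of the \emph{labeled tree-type} of an element of $\xi_{n_{k-1}}$ (the ``tree of partitions'' invariant of Section~4.3.1 together with the values of $f_1,\dots,f_k$); one then needs a separate combinatorial lemma converting closeness of these tree-type distributions into a cheap tree-respecting coupling. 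Third, the ``recursive dependence of $\bar\rho_k$ on $\bar\rho_{k-1}$'' that you correctly flag as the technical heart is left unresolved; the standard way out is to work with the direct criterion form of standardness (the one the paper states ``skips the intermediate steps''), which bounds the coupling cost against the initial metric and so avoids propagating errors through the recursion. Until these ingredients are supplied, the proposal is a plausible plan rather than a proof.
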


\begin{remark}
{\rm A consequence of the lacunary theorem is that the standardness of a central measure on the path space of a graph is a property of the projective limit, but not of the limiting simplex: by telescoping the approximation (omitting several levels of the graph) one can change the intrinsic topology, and,  by the lacunary theorem, make the tail filtration  standard  for any given measure.}
\end{remark}

\begin{definition}A graded graph is called eventually standard if there exist a telescoping that makes all central ergodic measures  standard.
\end{definition}

\begin{statement}
The graph $UP$ is not eventually standard.
\end{statement}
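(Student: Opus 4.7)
The plan is to argue by contradiction using the universality of $UP$ for dyadic filtrations. Assume that some telescoping $\{n_k\}$ of $UP$ renders every ergodic central measure standard. By the universality theorem cited above, every ergodic dyadic filtration on a Lebesgue space is level-preservingly isomorphic to the tail filtration $(T(UP),\nu_F)$ for some ergodic central measure $\nu_F$, so the telescoping at $\{n_k\}$ on the $UP$ side corresponds, on the dyadic filtration side, to replacing the original decreasing sequence $\{\mathfrak A_n\}$ by the subsequence $\{\mathfrak A_{n_k}\}$. Hence it suffices to exhibit a single ergodic dyadic filtration $F$ whose subfiltration at the prescribed levels $\{n_k\}$ fails the standardness criterion; the corresponding $\nu_F$ will then be an ergodic central measure on $T(UP)$ which remains nonstandard after the chosen telescoping.

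I would construct $F$ in two steps, tailoring the construction to the given $\{n_k\}$. Set $m_k=n_{k+1}-n_k$. First, I would build an ergodic semihomogeneous filtration $G=\{\eta_k\}$ in which the conditional measures of $\eta_k$ over $\eta_{k+1}$ are uniform on $2^{m_k}$ atoms and which is \emph{nonstandard} in the sense of Section~4. The existence of such $G$ for an arbitrary prescribed branching sequence is a direct generalization of the Vershik 1970 example: realize $G$ as the filtration of pasts of a random walk on a suitable non-abelian locally finite group whose step alphabet at level $k$ has size $2^{m_k}$, and verify that the iterated metrics $\bar\rho^G_k$ computed from a two-valued factor do not tend to zero, the group structure obstructing an efficient coupling of the Hamming-type distances between $\eta_k$-atoms. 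Second, I would extend $G$ to a dyadic filtration $F=\{\xi_n\}$ by setting $\xi_{n_k}=\eta_k$ and refining each atom of $\eta_k/\eta_{k+1}$ into a balanced binary tree of height $m_k$ via any identification with $\{0,1\}^{m_k}$. The resulting $F$ is dyadic, its tail $\sigma$-algebra coincides with that of $G$ and is hence trivial (so $F$ is ergodic), and any two-element partition refining $\xi_0/\xi_1$ serves as a dyadic generator. By universality, $F\simeq(T(UP),\nu_F)$, and the telescoped tail filtration at $\{n_k\}$ is exactly $G$, which is nonstandard by construction — the desired contradiction.

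The main technical obstacle is the first step: producing a nonstandard semihomogeneous filtration with an \emph{arbitrary} prescribed branching sequence $\{2^{m_k}\}$ dictated by the adversary's choice of telescoping. Vershik's original argument handles constant branching $2$; one must check that the coupling obstruction survives when consecutive levels are compressed according to $\{m_k\}$, i.e.\ that the group-theoretic randomness cannot be matched by bistochastic tree-respecting couplings even after the combinatorics is lumped. I expect this to require a careful choice of the underlying group (or of an analogous combinatorial device) adapted to $\{m_k\}$, together with a lower bound on the expected matching cost that is uniform in $k$. It is precisely this diagonalization against $\{n_k\}$ that is needed: by the lacunary theorem, \emph{any single} ergodic central measure can be standardized by its own telescoping, so the nonstandardness of individual measures is not enough; what is required is a single measure bespoke to the given $\{n_k\}$, and the rich supply of ergodic central measures guaranteed by the universality of $UP$ is exactly what makes this diagonal argument possible.
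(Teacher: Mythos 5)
The paper itself states this Proposition without proof (all proofs of the new results are deferred to the forthcoming article mentioned in the Introduction), so there is no argument of the author's to compare yours against; I can only judge your proposal on its own terms. Your logical skeleton is sound and is almost certainly the intended route: negating ``eventually standard'' gives ``for every telescoping $\{n_k\}$ there is an ergodic central measure that stays nonstandard''; the universality of $UP$ for ergodic dyadic filtrations with dyadic generator transfers this to pure filtration theory; an isomorphism of filtrations is level-preserving, so telescoping the graph corresponds exactly to passing to the subfiltration $\{\mathfrak A_{n_k}\}$; and your remark that the lacunary theorem makes a diagonalization against $\{n_k\}$ unavoidable (no single nonstandard measure can work for all telescopings) is precisely the right observation.

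The genuine gap is that, after this reduction, the entire mathematical content of the Proposition is concentrated in the one claim you do not prove: that for \emph{every} prescribed branching sequence $\{2^{m_k}\}$ there exists an ergodic homogeneous filtration with exactly that branching which is nonstandard, i.e.\ for which the iterated metrics $\bar\rho_k$ admit a matching-cost lower bound uniform in $k$. Given the universality theorem, this claim is essentially equivalent to the statement being proved, so what you have written is a correct reduction rather than a proof; you acknowledge as much when you call it ``the main technical obstacle.'' The sketch via random walks on locally finite groups with step alphabet of size $2^{m_k}$ is plausible and in the spirit of the first nonstandard example from \cite{V70}, but the verification that the coupling obstruction survives an adversarially chosen, possibly very rapidly growing $\{m_k\}$ is exactly where all the work lies and is absent. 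A secondary, smaller gap: the universality theorem you invoke applies to dyadic filtrations \emph{with a dyadic generator}, and your assertion that ``any two-element partition refining $\xi_0/\xi_1$ serves as a dyadic generator'' of the interpolated filtration $F$ is unjustified --- the generator condition $\bigvee_g g\xi=\epsilon$ quantifies over the full group of filtration-preserving automorphisms and is not automatic for an arbitrary two-block partition; it must be checked for the specific $F$ you build.
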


Now we can divide Borel actions of amenable groups
into two classes: eventually standard and not  eventually standard.
\begin{definition}
An eventually standard action of an amenable countable group is an action that admits an adic approximation (see the previous section) with a standard graded graph.
\end{definition}

Actions of this kind admit a smooth description of invariant measures. Supposedly, a Bernoulli action is not eventually standard, at least for an Abelean group.

A more delicate classification of actions should take in account specific properties of the graph.

  This question depends on the classification of orbit equivalence relations on a Borel space. But if we consider this problem up to {\it Borel isomorphisms} of the space, then, by Kechris' theorem (\cite{Kech}), all the cases with a given number of invariant measures are isomorphic. So, this  classification is too rough. On the other hand, the topological classification is too detailed  to be useful.

  Here we suggest how to divide the cases
into ``smooth'' (standard) and ``wild'' (nonstandard) ones using a finite approximation of the orbit partition or the structure of the associated branching graph (see above). More exactly, we consider eventually standard and nonstandard approximations of the orbit partition and distinguish the cases depending on the existence of a standard approximation.

\subsubsection{Characterization of standard filtrations in terms of martingales}

Now we give yet another formulation of the standardness criterion, in terms closer to the theory of random processes. Here we use the terminology and properties of Markov processes.

Consider a sequence of scalar (e.g., real) random variables $f_n$, $n>0 $, that constitute a Markov chain with finite sets of transitions (but with arbitrary state sets), in general nonstationary, and the filtration generated by this chain: $\{{\frak A}_n= \{f_k,\, k \geq n\};\, n=1,2, \dots \}$.
Assume that the zero--one law holds, i.e., the filtration is ergodic.

We will rephrase the standardness criterion for this filtration assuming that all  $f_n$ have finite first moments, and explain in what sense the criterion is a strengthening of Doob's martingale convergence theorem. The latter  says, for instance, that for all $k$
$$ \lim_n E[f_k|{\frak A}_n] = Ef_k$$
(the almost everywhere convergence of conditional expectations).

The same theorem can be applied not to the random variables themselves, but to their conditional distributions: as $n \to \infty$, the conditional distribution of the random variable $f_1$ (or of several first variables $f_1,\dots, f_k$) with respect to the $\sigma$-algebra ${\frak A}_n$, $n>k$, converges almost everywhere to the unconditional distribution. This fact uses only the ergodicity of the filtration (the zero--one law). For our purposes, it is convenient to eliminate the limiting (unconditional) distribution from this statement and rephrase the theorem as follows: the distance between the conditional distributions given $f_n=x_n$ and $f_n=y_n$ tends to zero as
$n\to\infty$ for almost all pairs $(\{x_n\},\{y_n\})$ of trajectories of the Markov chain.

The following assertion is a maximal (``diagonal'') strengthening of these theorems in which $k$ is not fixed but equal to $n$, i.e., tends to infinity simultaneously with the number of the $\sigma$-algebra with respect to which the conditional expectation is taken. This condition does not hold for all Markov chains, but only for standard filtrations (standard Markov chains).
Let us  formulate it precisely.

Consider the conditional distributions $\mu^x_n$ and $\mu^y_n$ of the first $n-1$ random variables
$ \{f_1,f_2, \dots , f_n\}$ given $f_{n+1}=x$
and $f_{n+1}=y$; the Markov property means that these conditional distributions are discrete measures on the set~$\mathbb R^n$ of vectors   (trajectories of the chain)
which depend only on the values~$x$ and~$y$.  Take a separable metric $\rho$ in the space of all trajectories of the process, and approximate it by a metric on the set of these vectors. Then consider the value  $$\bar r_n(x,y)=\min_{\psi \in \Psi_m} E K_{\bar\rho}(\mu^x_n, \mu^y_n),$$
where the minimum is taken over all Markov couplings, as in the standardness criterion. We emphasize that the Markov condition imposed on couplings (i.e., the requirement that the projections to the coordinates should preserve not only the measure, but also the order structure) is of crucial importance.

The above argument implies the following theorem.

 \begin{theorem}
 A Markov chain is standard if and only if
  $$\lim_n\int_{X\times X}r_n(x,y)\,d\mu(x)d\mu(y)=0.$$
 \end{theorem}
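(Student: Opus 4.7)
The plan is to show that the theorem is a direct transcription of the standardness criterion~\eqref{standcrit} into the language of Markov chains, via the dictionary between the filtration-theoretic and probabilistic sides of the picture.

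First I would fix the dictionary. For the natural filtration $\mathfrak{A}_n=\sigma(f_k:k\ge n)$ of the chain $\{f_n\}$, the Markov property ensures that almost every element $C$ of the partition $\xi_n$ corresponding to $\mathfrak{A}_n$ is labelled by a value $x$ of $f_{n+1}$, with conditional measure $\mu^C=\mu_n^x$. The induced finite filtration $\omega^C$ on $C$ is the canonical partition tree on $n$-step trajectories obtained by successively forgetting the earliest coordinate. Under this identification the class $\Psi_m$ of Markov couplings of $\mu_n^x$ with $\mu_n^y$ coincides with the class $\Psi(\omega^C,\omega^{C'})$ of filtration-respecting couplings from the definition of $r^{\bar\rho}$, since both conditions demand exactly the same time-ordered matching of the two tree structures.

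Second, I would verify that $r_n(x,y)$ equals the iterated semimetric $\bar\rho_n$ attached to an initial cylinder metric approximating the underlying $\bar\rho$. The content here is a Bellman-type identity: the global minimum over $n$-step Markov couplings of the additive cost $\int\bar\rho\,d\psi$ equals the stagewise iterate of one-step optimal couplings, precisely because Markov couplings factor through the Markov kernels at each stage and the Kantorovich cost is linear in the coupling. I would prove this by induction on $n$, using the conditional-expectation identity for Kantorovich distances on conditional measures. Once the identification is in place, the theorem becomes an integrated form of the criterion: the forward direction is dominated convergence (the initial $\bar\rho$ can be taken bounded, so all $\bar\rho_n$ are uniformly bounded by the monotonicity property recorded after the definition of the iterated metric), and the converse uses the ergodicity of the filtration together with the reduction already recorded above that standardness need only be tested against one sufficiently rich initial metric, together with the linearity and monotonicity of metric transfer which propagate vanishing from $\bar\rho$ to every admissible metric.

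The main obstacle is the Bellman identification in the second step. The remark after the standardness definition explicitly warns that the naive iterate $\rho_n$ (the weak intrinsic metric) is in general strictly smaller than $\bar\rho_n$, so one has to show that the one-step composition really does reproduce the global $n$-step optimization in the Markov setting, even though it fails to do so in general. Pinning down how the Markov factorization forces stagewise optima to compose into a globally optimal Markov coupling is the only genuinely technical point of the argument; everything else is an unpacking of definitions.
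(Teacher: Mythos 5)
Your overall route is the one the paper itself intends: in Section~4.4.1 the theorem is introduced with nothing more than the sentence ``The above argument implies the following theorem,'' and the detailed proof of the standardness criterion is explicitly deferred to a forthcoming article. So treating the statement as a transcription of the standardness condition~\eqref{standcrit} into the language of conditional distributions of the chain, with $r_n$ playing the role of the integrand in the definition of $\bar\rho_n$, is exactly the intended reading, and your dictionary in the first step (elements $C$ of $\xi_n$ labelled by values of $f_{n+1}$, conditional measures $\mu^C=\mu_n^x$, the tree of partitions on $n$-step trajectories) is correct and matches the paper's setup.

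The substance of your proposal therefore lives entirely in your second step, and there you have named the crux without closing it. Two points still need actual proof. First, the identification of the class $\Psi_m$ of Markov couplings of $\mu_n^x$ and $\mu_n^y$ with the class $\Psi(\omega^C,\omega^{C'})$ of filtration-respecting couplings is not automatic: the paper defines $\Psi(\omega^1,\omega^2)$ by a \emph{support} condition (whenever $\psi_{x,y}>0$, every $\xi_k$-neighbor of $x$ must be matchable to some $\xi_k$-neighbor of $y$ in the support of $\psi$, and conversely), which is not literally the statement that $\psi$ disintegrates through the one-step kernels level by level; you assert that the two conditions ``demand exactly the same time-ordered matching,'' but that equivalence is precisely the kind of claim that must be checked, not declared. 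Second, the Bellman identity $r_n=\bar\rho_n$: the inequality $r_n\le\bar\rho_n$ follows by composing stagewise optimal admissible couplings into a global admissible one, but the converse requires showing that \emph{every} admissible global coupling disintegrates into admissible one-step couplings whose composed cost is no larger, i.e., that the admissible class is stable under conditioning; your appeal to ``the conditional-expectation identity for Kantorovich distances'' is the right instinct, but as written it is a plan rather than an argument, and it is exactly the point at which the distinction between $\rho_n$ and $\bar\rho_n$ (which the paper's own remark warns about) could reappear. Since the paper supplies no proof to compare against, I cannot fault you for not reproducing one; but be aware that your proposal, as it stands, reduces the theorem to an unproved lemma rather than proving it.
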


This means that in the standard case, and only in this case, the conditional measures of the process with fixed $f_n$ concentrate in a very strong sense. This again remind us the independence property.

Now we will relate this statement to limit shape theorems. Regarding trajectories of the Markov chain as paths in the Bratteli diagram, the standardness of a central measure on paths can be formulated as follows:
{\it for every $\epsilon>0$ there is $N$ such that for every $n>N$ there exists a vertex $v_n$ of the $n$-th level of the graph such that the measure of the set of paths that meet this level at vertices from the
$\epsilon$-neighborhood of  $v_n$ is not less than $1-\epsilon$}. Here a neighborhood is understood in the sense of the $n$-iterated arbitrary initial metric. In examples where vertices of the graph are some sort of configurations, this fact turns into a theorem on the concentration of a random distribution near some configuration.

\subsection{The difference between standard and nonstandard filtrations}

The notion of standardness, introduced in Sections 3 and 4, allows us to divide central measures and, consequently,
extreme points of the simplex $\Sigma_{\infty}(\Gamma)$
into two classes. Note that almost all paths with respect to a given measure weakly tend to this measure (regarded as an extreme point), but if the extreme point is from the first class (standard), then the paths converge in the sense of the intrinsic metric (the convergence with respect to which in this case coincides with the weak convergence). On the contrary, for a nonstandard measure,
almost all paths do not converge to this extreme point.
This remark is nothing more than another formulation of the above statement in terms of martingales.

 The following simple example is very instructive from different points of view. We present two filtrations, which are the tail filtrations of stationary Markov processes, that are finitely isomorphic but not isomorphic. One of the filtrations is standard (Bernoulli), the other one is Markov stationary, not Bernoulli, and not even standard, but weakly standard. Both filtrations are far from being homogeneous. Namely, we consider the Markov processes on the space
$$X=\prod_1^{\infty} \{0;1\}$$
with the transition matrices
$$\left(
  \begin{array}{cc}
    p & q \\
    p & q \\
  \end{array}
\right)\qquad\text{and}\qquad\left(
  \begin{array}{cc}
    p & q \\
    q & p \\
  \end{array}
\right),$$
where $q=1-p>0$ and $p \ne q$. For the first process, the tail filtration is Bernoulli, and hence standard. For the second one, the filtration is not standard, which can easily be seen by checking the violation of the criterion condition: namely, the distance between two distinct vertices is equal to
 $|p-q|$ at all levels (see also below). In this case,
 for all $n$ we have two types of elements of the partitions
$\xi_n$, denote them by $a_n,b_n$; and $\bar \rho_n (a_n,b_n)=|p-q|\nrightarrow 0$. But $\rho_n(a_n,b_n)=|p-q|^n \rightarrow 0$. The first cylinders $\{\{x_n\}:x_0=0\}$ and
$\{\{x_n\}:x_0=1\}$ cannot be distinguished by the filtration, because
it is invariant with respect to the flip $0\Leftrightarrow 1$.

\newpage

 \section{Classification of metrics and measurable functions of several variables; the matrix distribution and invariant measures}

\subsection{Invariant measures for the infinite symmetric group and  related ``big'' groups}

As an example of the problem of describing the invariant measures, consider the diagonal action of the group $S_{\Bbb N}$ of all finite permutations on the space of infinite tensors of rank $n$ with values in any Borel space $E$: $\{t_{i_1,i_2,\dots, i_n}\}$, $i_k \in {\Bbb N}$, $k=1,\dots, n$,   $t_{i_1,i_2,\dots i_n}\in E$  (it suffices to consider the interval $E=[0,1]$, and the most interesting case is $E=\{0,1\}$).

We may consider special types of tensors (symmetric, antisymmetric, etc.) or multi-coordinate actions of the  group $S_{\Bbb N}^n$, for example, separate actions on each coordinate and so on. All actions of this type will be called {\it tensor actions} of the infinite symmetric group.
Because of the inductive (locally finite) structure of the group $S_{\Bbb N}$, the problem of describing the invariant measures can be included into the above context of central measures on path spaces of branching graphs.

\begin{theorem}
All actions of the infinite symmetric group in the space of tensors are eventually standard, and, consequently, the list of all ergodic invariant measures is precompact. This means that
there is a natural approximation of this action on an eventually standard graded graph.
\end{theorem}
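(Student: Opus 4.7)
The plan is to realize the diagonal tensor action of $S_{\mathbb N}$ as an adic action on the path space of a canonical graded graph, identify its $S_{\mathbb N}$-invariant measures with the central measures of that graph, and then deduce eventual standardness from the classical classification of such invariant measures (de Finetti for $n=1$, Aldous--Hoover--Kallenberg for general $n$) combined with the lacunary theorem.

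First, fix a tensor rank $n$ and value space $E$. I would define the graded graph $\Gamma=\Gamma^{(n)}_E$ whose vertices at level $m$ are the $S_m$-orbits on the finite tensor space $E^{[m]^n}$, with an edge from $u\in\Gamma_m$ to $v\in\Gamma_{m+1}$ whenever the restriction of some representative of $v$ to the index set $[m]^n$ lies in the orbit $u$. A path in $\Gamma$ is a compatible sequence of orbits and corresponds canonically to an $S_{\mathbb N}$-orbit of an infinite tensor; consequently the tail equivalence relation $\tau_\Gamma$ coincides $\bmod 0$ with the $S_{\mathbb N}$-orbit equivalence relation on $E^{\mathbb N^n}$. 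By the proposition on hyperfinite equivalence relations from Section~2, $S_{\mathbb N}$-invariant Borel probability measures on the tensor space are in bijection with central measures on $T(\Gamma)$, and ergodicity is preserved; in particular the tensor action is isomorphic to the tail-adic action on $(T(\Gamma),\nu)$ for the appropriate central $\nu$. For $n=1$ with $E=\{0,1\}$ this recovers the Pascal graph and de Finetti's theorem.

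Next, I would invoke the Aldous--Hoover--Kallenberg representation: every ergodic $S_{\mathbb N}$-invariant measure on $E^{\mathbb N^n}$ is the law of $t_{i_1,\ldots,i_n}=f(\xi_S:\emptyset\ne S\subseteq\{i_1,\ldots,i_n\})$ for a measurable kernel $f:[0,1]^{2^n-1}\to E$ and an i.i.d.\ family $\{\xi_S\}$ of uniform $[0,1]$ random variables indexed by nonempty finite subsets of $\mathbb N$. The matrix-distribution invariant (in the sense of Section~5) pins down $f$ up to the natural relabeling equivalence, yielding a Polish parameter space; from this the precompactness of $\Erg(\Gamma)$ follows. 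Moreover, from the explicit representation the expected limit-shape behaviour on $\Gamma$ with respect to any such $\nu$ holds: the law of large numbers applied to the i.i.d.\ variables $\xi_S$ shows that the empirical orbit-statistics of a random tensor concentrate around $f$, which via the Kantorovich iteration translates into $\bar\rho_m\to 0$ along some subsequence for the measure $\nu$.

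Finally, to convert standardness of each individual $\nu$ into \emph{eventual} standardness of the graph itself, I would invoke the lacunary theorem (Theorem~5) in a uniform form. For each fixed $\nu$ it provides a telescoping making the tail filtration standard; the task is to choose a single telescoping valid for all ergodic $\nu$ simultaneously. The plan is a diagonal argument on a countable dense subset of kernels in the Polish parameter space, provided the Aldous--Hoover concentration rate is uniform on compact families of kernels. The main obstacle is exactly this uniformity: one must upgrade the pointwise (in $\nu$) lacunary theorem to a version that depends measurably and uniformly on the parameter, so that the resulting telescoped multigraph satisfies the standardness criterion \eqref{standcrit} along every ergodic central measure at once. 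Once this uniformity is in hand, the telescoped graph gives the desired adic approximation and establishes the theorem; the remaining ingredients -- the graph construction, the bijection between invariant and central measures, and precompactness -- are then essentially routine within the framework of the paper.
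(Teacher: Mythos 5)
Your construction of the orbit graph and the identification of $S_{\mathbb N}$-invariant measures on tensors with central measures on its path space is sound and consistent with the framework of Sections~2--4, and invoking de Finetti / Aldous--Hoover--Kallenberg to parameterize $\Erg(\Gamma)$ is a reasonable way to see precompactness. But the proof breaks down exactly where you flag the ``main obstacle,'' and the obstacle is not a removable technicality: it is the whole content of the theorem. The lacunary theorem holds for \emph{every} equipped graded graph and \emph{every} individual central measure --- including the graph $UP$, which the paper explicitly states is \emph{not} eventually standard, and including the Bernoulli action of $S_{\mathbb N}$ on $2^{S_{\mathbb N}}$, which the paper lists as nonstandard. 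So the schema ``each $\nu$ admits a telescoping, now diagonalize over a dense family of parameters'' cannot work in the generality you describe; if it did, it would prove that every graph with a nicely parameterized boundary is eventually standard, which is false. What is needed is a verification of the standardness criterion~\eqref{standcrit} for a \emph{single} telescoping, uniformly over all ergodic measures, using something specific to tensor actions; your appeal to the law of large numbers for the i.i.d.\ variables $\xi_S$ gestures at this but is not carried out, and you explicitly defer it (``Once this uniformity is in hand\dots''). As written, the central claim is assumed rather than proved.

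It is also worth noting that the paper's own (admittedly sketchy) argument runs along an entirely different line: the standardness of tensor actions is attributed to the fact that they extend to the completion $S^{\mathbb N}$ of $S_{\mathbb N}$ in the weak topology, whose unitary representations are exhausted by tensor representations by Lieberman's theorem; the tameness of the representation theory of this larger group is what forces the smooth (precompact) parameterization of the ergodic invariant measures and, in the author's view, the eventual standardness. Your route through the Aldous--Hoover representation is a legitimate alternative starting point --- indeed Section~5 of the paper connects Aldous' theorem to matrix distributions --- but to complete it you would need to turn the explicit kernel representation into a concentration estimate for the iterated metrics $\bar\rho_m$ that is uniform in the kernel $f$ on the compact parameter space, rather than invoking the lacunary theorem measure by measure.
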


The term ``eventually standard" has the same sense as in the previous definition for graded graphs.
\begin{remark}{\rm
1. By a ``natural approximation" we mean  an approximation by an inductive family of actions of the finite groups $S_n$ on spaces of finite-dimensional tensors. For the group $S_{\Bbb N}$, tensor actions play a role  similar to the role of actions with discrete spectra for commutative groups ($\Bbb Z$).

2. There are many examples of nonstandard actions of $S_{\Bbb N}$, e.g., the Bernoulli action on the space
$2^{S_{\Bbb N}}$. It is an interesting problem to describe all standard actions of this group.
}\end{remark}

The reason underlying the standardness is related to the fact that an action can be extended to the group $S^{\Bbb N}$ of all permutations of $\Bbb N$ (which is the completion of the group $S_{\Bbb N}$ in the weak topology), and the representations of this bigger group are exhausted by tensor representations (\cite{Lieber}). But here we have, in a sense, the equivalence of these two facts: the possibility to enumerate the ergodic invariant measures and the irreducible representations of the group $S^{\Bbb N}$.

A similar problem and a corresponding result can be stated for the infinite unitary group and other inductive
limits of finite or compact groups. There is also a link to the representation theory  of these groups.

Specific examples of invariant measures on the space of matrices are closely related to the classification of measurable functions, which we consider in the next section.

\subsection{Classification of functions and metrics, matrix distributions as random matrices}

We start with the following problem.

\begin{problem}To classify measurable (or continuous, smooth, etc.) functions of two or more independent variables
 up to the group of automorphisms of each variable
separately.
\end{problem}

More exactly, in the case of the measure-theoretic category, two real-valued measurable functions $f,g$ of $d$ variables on the Lebesgue space $X^d$ with product
continuous measure $\mu^d$  are {\it separately metrically isomorphic} (we assume that $d=2$)
 if $$g(x,y)=f(T_1x,T_2y),$$  and {\it jointly metrically isomorphic}
 if $$g(x,y)=f(Tx,Ty),$$
where  $T_1,T_2, T$ are arbitrary invertible measure-preserving  transformations
of $(X,\mu)$. In the latter case, it is natural to assume that $f,g$ are
symmetric functions.

A very important special case is as follows.
\begin{problem} To classify Polish (= separable complete metric) spaces $(X,\rho,\mu)$
with metric $\rho$ and Borel probability measure $\mu$ up to measure-preserving isometries:
 $$(X,\mu,\rho) \sim (X_1,\mu_1,\rho_1) \Leftrightarrow \rho_1(Tx,Ty)=\rho(x,y),$$
 where $T:X\rightarrow X_1$, $T\mu=\mu_1$.
 \end{problem}

 M.~Gromov \cite{Gr} considered a Polish space with a Borel measure (called an ``mm-space'');  the author considered the more general case of an ``admissible triple'': a standard measure space with a separable metric that is a measurable function of two variables, see \cite{VPZ}. It is obvious that the classification of metric spaces is the same
as the classification of  metrics as measurable functions of two variables on the square of the space, and this is a special case of the previous problem.

\smallskip
For simplicity, we will speak about  measurable symmetric functions and the diagonal group: $T_1=T_2$.
Let $f$ be a real symmetric measurable function of two variables on the space
$(X \times X, \mu \times \mu)$ with values in some standard Borel space $R$.

The function $f$ is called a {\it pure function} if the inequality  $$\mu\{x:f(gx,\cdot)=f(x,\cdot) \bmod 0\}>0$$ holds only when $g={\rm Id} \bmod 0$,
and a {\it completely pure} function if the equality  $$f(gx,hy)=f(x,y) \bmod 0 \quad \mbox{with respect to the measure} \quad \mu\times\mu, $$ holds only when the measure-preserving transformations $g,h$ are  identical: $g=h={\rm Id}$.

Consider the infinite product space $(X^{\mathbb N},
\mu^{\mathbb N}) $ of the domain spaces.
Let $M_{\mathbb N}(R)$  be the space of all symmetric matrices
with entries from $R$.  We will omit $[0,1]$
in the notation: $M_{\mathbb N}([0,1])=M_{\mathbb N}.$
Define a map
$$ F_f:X^{\mathbb N} \to M_{\mathbb N}(R)$$
by the formula
$$F_f(\{x_n\}_{n\in \Bbb N})=\{f(x_i,x_j)\}_{i,j=1}^\infty.$$

\begin{definition}
The push-forward measure $F_f(\mu^{\mathbb N})$
 on the space of matrices
$M_{\mathbb N}(R)$ (the image of the measure  $\mu^{\mathbb N}$
under the map $F_f$) will be called the {\it matrix distribution of the measurable function $f$} and denoted by $D_f$.
\end{definition}

The matrix distribution is a generalization of the notion of the distribution of a function of one variable.
It is easy to check that the matrix distribution is an ergodic $S_{\Bbb N}$-invariant measure
with respect to the diagonal action of the group on the space of matrices.

\begin{theorem}[Classification of pure measurable functions \cite{V02}]
Let $f$ be a pure symmetric measurable function on the space
$(X \times X, \mu \times \mu)$ with values in a standard Borel space $R$.
Then the matrix distribution measure $D_f$
is a complete invariant of the function  $f$ in the sense of the above equivalence.
\end{theorem}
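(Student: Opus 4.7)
The plan is to prove both directions of the equivalence, with the easy direction being a direct computation and the hard direction exploiting purity to reconstruct the function from its matrix distribution.

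For the forward direction, suppose $g(x,y)=f(Tx,Ty)$ with $T$ a measure-preserving automorphism of $(X,\mu)$. The product map $T^{\mathbb N}\colon X^{\mathbb N}\to X^{\mathbb N}$ defined by $(x_n)\mapsto (Tx_n)$ preserves $\mu^{\mathbb N}$, and by construction $F_g=F_f\circ T^{\mathbb N}$. Pushing forward $\mu^{\mathbb N}$ on both sides yields $D_g=D_f$. This part is essentially formal.

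For the converse, assume $f$ on $(X,\mu)$ and $g$ on $(Y,\nu)$ are pure symmetric functions with $D_f=D_g$. The central step is to show that $F_f\colon (X^{\mathbb N},\mu^{\mathbb N})\to (M_{\mathbb N}(R),D_f)$ is a measure-theoretic isomorphism of $S_{\mathbb N}$-spaces (and similarly for $g$). Here is where purity enters: I would first show that for $\mu^{\mathbb N}$-almost every sequence $\{x_j\}_{j\ge 1}$, the countable family of measurable functions $\{f(\cdot,x_j)\}_{j\ge 1}$ generates the Borel $\sigma$-algebra of $(X,\mu)$ mod $0$. Indeed, purity says precisely that no nontrivial measure-preserving transformation of $X$ leaves each $f(\cdot,y)$ unchanged mod $0$; by separability of $X$ and an ergodic/Fubini argument, a $\mu^{\mathbb N}$-generic countable sample $\{x_j\}$ suffices to separate points. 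Consequently, the row-map $x\mapsto (f(x,x_j))_j$ is essentially injective for a.e.\ choice of the sample, which permits a Borel reconstruction of each $x_i$ from the $i$-th row of the matrix $F_f(\{x_n\})$. This produces an $S_{\mathbb N}$-equivariant Borel inverse $\Phi_f$ defined $D_f$-a.e. The identical argument applied to $g$ yields $\Phi_g$; composing, the map $\Psi:=\Phi_g\circ F_f\colon (X^{\mathbb N},\mu^{\mathbb N})\to (Y^{\mathbb N},\nu^{\mathbb N})$ is an $S_{\mathbb N}$-equivariant measure-space isomorphism.

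The final step is to show that $\Psi$ must be a diagonal product map, i.e.\ $\Psi((x_n))=(Tx_n)$ for some measure-preserving $T\colon X\to Y$. This is a Hewitt--Savage/de Finetti type rigidity statement: $S_{\mathbb N}$-equivariance forces $\Psi$ to respect the filtration of $\sigma$-algebras given by the coordinate projections (the first coordinate is the unique (mod $0$) invariant of the action of the stabilizer $\mathrm{Stab}(1)\subset S_{\mathbb N}$ up to null sets, by ergodicity of $\mu^{\mathbb N}$ under $S_{\mathbb N}$). Consequently $\Psi$ descends to a measure-preserving $T\colon (X,\mu)\to (Y,\nu)$, and the identity $F_g\circ T^{\mathbb N}=F_f$ (inherited from $D_f=D_g$ and equivariance) unpacks to $g(Tx,Ty)=f(x,y)$ for $\mu\times\mu$-a.e.\ $(x,y)$.

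The principal obstacle is the purity-based reconstruction in Step~2: one must carefully show that a pure symmetric measurable function of two variables produces, for generic sampling, a separating countable family on $(X,\mu)$. A secondary technical point is the rigidity of $S_{\mathbb N}$-equivariant isomorphisms of infinite product spaces, which requires the ergodicity of $\mu^{\mathbb N}$ under the symmetric group action together with a standard Borel-selector argument to extract $T$ from $\Psi$.
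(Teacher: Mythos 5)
Your overall architecture (forward direction by pushing forward along $T^{\mathbb N}$; converse by inverting $F_f$, composing to get an $S_{\mathbb N}$-equivariant isomorphism $\Psi$, and then using Hewitt--Savage rigidity to force $\Psi$ to be diagonal) is sound, and the last step is correct: the $\sigma$-algebra of sets invariant under permutations of the coordinates $x_2,x_3,\dots$ is, mod $0$, the $\sigma$-algebra of the first coordinate, so an equivariant isomorphism of Bernoulli $S_{\mathbb N}$-spaces does descend to a single automorphism $T$. The forward direction is also fine. But the central step --- the construction of the a.e.\ defined Borel inverse $\Phi_f$ from purity --- has a genuine gap as written. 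You argue that for a $\mu^{\mathbb N}$-generic sample $\{x_j\}$ the family $\{f(\cdot,x_j)\}_j$ separates points, so $x\mapsto(f(x,x_j))_j$ is essentially injective, and conclude that each $x_i$ can be reconstructed ``from the $i$-th row of the matrix.'' This is circular: the separating family, and hence the inverse of the row map, depends on the actual points $x_j$, whereas the matrix $F_f(\{x_n\})$ only records the values $f(x_i,x_j)$. Your separation argument shows $\sigma(x_1)\subset\sigma(F_f)\vee\sigma(x_j\colon j>n)$ mod $0$ for every $n$, but passing from this to $\sigma(x_1)\subset\sigma(F_f)$ requires exchanging a decreasing intersection with a join of $\sigma$-algebras, which is exactly the kind of operation that fails in general --- indeed it is the central theme of this paper's theory of filtrations --- so it cannot be waved through.

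The paper's route (it cites the proof to \cite{V02}, and for the metric case \cite{V98} notes that it ``is very simple and based on the individual ergodic theorem''; the same device appears in the proof of the reconstruction theorem that follows) closes this gap with the pointwise ergodic theorem: from the matrix \emph{alone} one forms the empirical distributions of finite blocks of columns, $\frac1n\sum_{j\le n}\delta_{(r_{1j},\dots,r_{kj})}$, which converge $\mu^{\mathbb N}$-a.s.\ to the joint distribution of $(f(x_1,\cdot),\dots,f(x_k,\cdot))$ under $\mu$. These limits are measurable functions of the matrix and recover the functions $f(x_i,\cdot)$ as elements of $L^0(X,\mu)$ up to a single global automorphism of $(X,\mu)$; purity (in the separation form you correctly extract from the definition) then identifies the points $x_i$ from these functions. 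With that replacement your Steps~2 and~3 go through, and the two approaches coincide. So you have the right skeleton, but the reconstruction must be routed through empirical distributions rather than through an inverse of the row map.
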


 In other words,

\smallskip
1) if two (not necessarily pure) real functions $f_1$ and $f_2$ are isomorphic, then
$D_{f_1}=D_{f_2}$;

\smallskip

2) if two pure measurable functions  $f_1,f_2$
are defined on spaces\break $(X_1 \times X_1, \mu_1 \times\mu_1)$ and
$ (X_2 \times X_2, \mu_2 \times\mu_2)$, respectively,
and  have the same measures  $D_{f_1}=D_{f_2}$
on the space $M_{\mathbb N}(\mathbb R)$,
then they are isomorphic, i.e.,
there exists a measure-preserving automorphism $T:(X_1,\mu_1) \to (X_2,\mu_2)$ such that
$f_2(x, y)=f_1(Tx , Ty)$ for almost all $(x,y) \in (X_2 \times X_2)$.

The special case of metrics is as follows.

\begin{theorem}[Gromov \cite{Gr}, Vershik \cite{V98}]
Let $(X,\rho,\mu)$ be a metric separable space with a nondegenerate Borel probability measure (= there are no
nonempty open sets of zero measure).
Then the matrix distribution $D_{\rho}$ is a complete invariant with respect to measure-preserving isometries.
Here the matrix distribution $D_{\rho}$ is the measure on the space of infinite distance matrices $\{r_{i,j}\}$
that is the image of the Bernoulli measure $\mu^\infty$
under the map $F_{\rho}:X^{\infty}\rightarrow M_N(\Bbb R)$ given by
$F_{\rho}(\{x_n\})=\{\rho(x_i,x_j)\}$.
\end{theorem}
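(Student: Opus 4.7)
The result has two directions. The forward one is immediate: a measure-preserving isometry $T:(X_1,\mu_1)\to(X_2,\mu_2)$ induces $T^{\mathbb N}:X_1^{\mathbb N}\to X_2^{\mathbb N}$ which pushes $\mu_1^{\mathbb N}$ forward to $\mu_2^{\mathbb N}$ and satisfies $F_{\rho_2}\circ T^{\mathbb N}=F_{\rho_1}$ pointwise, so $D_{\rho_1}=D_{\rho_2}$. My plan for the converse is to exhibit an explicit procedure that reconstructs $(X,\rho,\mu)$ canonically, up to measure-preserving isometry, from $D_\rho$-almost every infinite distance matrix; equal matrix distributions will then give a common reconstruction and hence an isomorphism.

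For the reconstruction, sample $\bar x=(x_k)_{k\in\mathbb N}$ from $\mu^{\mathbb N}$ and set $r_{ij}=\rho(x_i,x_j)$, so that the law of $r$ is $D_\rho$. View $(\mathbb N,r)$ as a pseudometric space and let $\hat Y$ be its metric completion. Because the Bernoulli shift on $(X^{\mathbb N},\mu^{\mathbb N})$ is ergodic, the empirical measures $\frac1n\sum_{k=1}^n\delta_{x_k}$ converge weakly to $\mu$ for $\mu^{\mathbb N}$-a.e.\ $\bar x$ (the standard a.s.\ weak convergence of i.i.d.\ empiricals on a Polish space, which reduces to Birkhoff applied to a countable determining family of bounded continuous test functions). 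Transporting back, the intrinsically defined empirical measures $\frac1n\sum_{k=1}^n\delta_k$ on $\hat Y$ then converge weakly to a probability measure $\nu$ that depends only on $r$. Nondegeneracy of $\mu$, i.e.\ $\operatorname{supp}\mu=X$, additionally forces $\{x_k\}$ to be dense in $X$ almost surely, so the tautological map $\iota:k\mapsto x_k$, which is distance-preserving by construction, extends uniquely to a surjective isometry $\hat\iota:\hat Y\to X$ carrying $\nu$ to $\mu$. Thus the triple $(\hat Y,\hat r,\nu)$, reconstructed from $r$ alone, is isomorphic as an mm-space to $(X,\rho,\mu)$.

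Consequently, if $D_{\rho_1}=D_{\rho_2}$, pick any $r$ in the intersection of the two full-measure ``generic'' sets; the reconstruction then yields a single mm-space simultaneously isomorphic to $(X_1,\rho_1,\mu_1)$ and to $(X_2,\rho_2,\mu_2)$, producing the required measure-preserving isometry. The main technical obstacle is to isolate one $\mu^{\mathbb N}$-full-measure event on which both (i) $(x_k)$ is $\mu$-equidistributed, which uses the Polish/separable structure of $X$ and ergodicity of the Bernoulli shift, and (ii) $\{x_k\}$ is dense in $X$ --- the latter being precisely where nondegeneracy of $\mu$ is invoked. Once both hold, the identification $\hat\iota_*\nu=\mu$ is automatic by matching integrals of bounded continuous functions, and the remaining assertions are formal. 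In spirit, the theorem is the metric specialization of the preceding general classification of pure symmetric measurable functions of two variables, with the role of ``purity'' in that theorem taken over here by the full-support hypothesis on $\mu$.
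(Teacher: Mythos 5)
Your proposal is correct and follows essentially the same route the paper indicates for this theorem: the converse is proved by the ``ergodic method'' of \cite{V98}, i.e.\ reconstructing the mm-space from a $D_\rho$-generic distance matrix as the completion of $(\mathbb N,r)$ carrying the weak limit of the empirical measures (guaranteed a.s.\ by the individual ergodic theorem), with full support of $\mu$ ensuring density of the sample --- exactly the reconstruction the paper sketches again in its proof of the characterization of matrix distributions. The only caveat is that surjectivity of your limiting isometry $\hat\iota$ implicitly uses completeness of $X$, which is the setting (Polish spaces) the paper intends.
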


Gromov's first proof used some analytic tools (the moment problem); the proof in \cite{V98} is very simple and based on the individual ergodic theorem.

The matrix distributions of  symmetric functions of two variables (e.g., metrics) is a special class of
ergodic invariant measures on the space $M_{\Bbb N}$ which can be directly characterized. Here we consider
only the case of metrics regarded as measurable functions of two variables.

The previous theorem implies the uniqueness of an admissible triple with a given matrix distribution up to measure-preserving isometries. The next theorem gives conditions that guarantee the existence of an admissible triple with a given matrix distribution.

Denote by $\cal R$ the closed cone of all distance matrices in $M_{\infty}({\Bbb R}_+)$:
$$ {\cal R}=\{\{r_{i,j}\}: r_{i,j}\geq 0,\; r_{i,j}=r_{j,i}, \; r_{i,j}+r_{j,k}\geq r_{i,k},\; i,j,k
\in {\Bbb N}\}.$$

\begin{theorem}[Characterization of matrix distributions, reconstruction of the metric triple, \cite{V02,V12}]
A probability measure $\tau$ on the space of distance matrices $\cal R$ is the matrix distribution $D_{\rho}$ of some
metric if and only if the following conditions are satisfied:

{\rm 1)} The measure $\tau$ is invariant under the action of the infinite symmetric group $S_{\Bbb N}$ by simultaneous permutations of rows and columns.

{\rm 2)} The measure  $\tau$ is ``simple,'' which means the following. Consider the map $T$ that, by definition, sends $\tau$-almost every matrix $r\equiv\{r_{i,j}\}$ to the {\it empirical distribution} of the columns of  $r$; this is a well-defined (because of the ergodic theorem) map $T:({\cal R},\tau) \rightarrow (\mbox{\rm Meas}[R^{\infty}_+], \theta\equiv T_{*}\tau)$.
The condition is that it is an isomorphism between these measure spaces.
 Here $\mbox{\rm Meas}[R^{\infty}]$ is the space of all Borel probability measures on the space $R^{\infty}$.
\end{theorem}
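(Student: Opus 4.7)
My plan has two parts: necessity and sufficiency, with the latter being the substantive direction. Necessity is essentially formal: the sampling map $F_\rho:X^{\mathbb N}\to \mathcal R$ intertwines the diagonal $S_{\mathbb N}$-action on $X^{\mathbb N}$ with the simultaneous row-and-column action on $\mathcal R$, so $S_{\mathbb N}$-invariance of $\mu^{\mathbb N}$ passes to $D_\rho$, giving (1). For simplicity I would apply Birkhoff's pointwise ergodic theorem to the Bernoulli shift on $(X^{\mathbb N},\mu^{\mathbb N})$: for $\mu^{\mathbb N}$-a.e.\ $\{x_n\}$, the empirical distribution of the columns $c_j=(\rho(x_i,x_j))_i$ converges weakly to the push-forward of $\mu$ under the Kuratowski-type embedding $\iota:y\mapsto(\rho(x_i,y))_i$, so $T$ is well defined. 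Injectivity of $T$ modulo $D_\rho$ is then immediate: given $\nu=T(r)$, the column $c_i$ is the unique point of $\mathrm{supp}(\nu)$ whose $i$-th coordinate equals $0$, because $\rho(y,y)=0$ and $\rho$ separates points ($\mu\otimes\mu$-a.s.\ by nondegeneracy). The entire matrix $r=(c_j(i))_{i,j}$ is thereby recovered from $\nu$.

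For sufficiency the task is to reverse-engineer an admissible triple from an abstract $\tau$ satisfying (1) and (2). Fix a $\tau$-generic matrix $r$; by (1) and the ergodic theorem, the empirical column distribution $\nu_r=T(r)$ exists in $\Meas(R_+^\infty)$. I propose
\[(X,\rho,\mu):=\bigl(\mathrm{supp}(\nu_r),\;\ell^\infty,\;\nu_r\bigr),\]
where $\ell^\infty$ is the sup-metric inherited from $R_+^\infty$ (replaced by a bounded equivalent such as $\min(\ell^\infty,1)$ when $r$ has unbounded entries, to ensure that $(X,\rho)$ is Polish). The key arithmetic identity is that, on the countable dense sequence $\{c_j\}_j$ of columns,
\[\rho(c_i,c_j)=\sup_k|r_{ki}-r_{kj}|=r_{ij},\]
with the upper bound being the triangle inequality for $r\in\mathcal R$ and equality attained at $k=i$ since $r_{ii}=0$. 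Hence $(\mathbb N,r)$ embeds isometrically into $(X,\rho)$, and the reconstructed $\rho$ correctly reproduces the given matrix on its defining columns.

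To identify $D_\rho$ with $\tau$, I observe that the reconstruction $r\mapsto(X,\rho,\mu)$ is equivariant: a simultaneous row-and-column permutation of $r$ yields an isometric copy of the triple. Consequently $D_\rho$ is an $S_{\mathbb N}$-invariant measurable function of $r$, and after passing to an ergodic component of $\tau$ it equals a fixed measure $\tau'$ for $\tau$-a.e.\ $r$. By the necessity direction just proved, $\tau'$ itself satisfies (1) and (2), so $T$ is an isomorphism on $(\mathcal R,\tau')$ as well. Sampling $y_1,y_2,\dots$ iid from $\mu$ and forming $m_{ij}=\rho(y_i,y_j)$, Birkhoff's theorem computes $T(m)$ as the push-forward of $\mu$ under $y\mapsto(\rho(y_i,y))_i$; since this map restricts to the identity on the dense sequence of columns (by the displayed identity above), it preserves $\nu_r$, and so $T(m)=\nu_r=T(r)$ almost surely. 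Applying $T^{-1}$ via condition (2) then forces $\tau'=\tau$.

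The delicate step is the last one: iid samples from $\nu_r$ are not literally the specific columns $c_i$ of $r$, so the identity $T(m)=T(r)$ almost surely is not tautological. Justifying it cleanly requires either an Aldous--Hoover representation of the jointly exchangeable array $r$, from which condition (2) forces $r_{ij}=g(\xi_i,\xi_j)$ with $\xi_i$ iid uniform and the reconstruction $(X,\rho,\mu)=([0,1],g,\mathrm{Leb})$ is immediate, or a direct double-ergodic-theorem argument exploiting that the coordinate structure on $\mathrm{supp}(\nu_r)$ coincides with the $\rho$-structure on the canonical dense sequence. A secondary technical point is the truncation of $\ell^\infty$ to ensure that $(X,\rho)$ is Polish.
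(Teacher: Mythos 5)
Your overall route coincides with the paper's: necessity of (1) by equivariance of the sampling map, necessity of (2) by recovering the matrix from the empirical column distribution, and sufficiency by rebuilding the metric triple from a $\tau$-generic matrix $r$ as the completion of $({\Bbb N},r)$ carrying the limit of empirical measures. Your realization of that completion inside $R^{\infty}_+$ via the Kuratowski-type identity $\sup_k|r_{ki}-r_{kj}|=r_{ij}$ is a correct and usefully concrete version of the paper's construction, and your injectivity argument for $T$ (the column $c_i$ is the unique point of the support whose $i$-th coordinate vanishes) is sound.

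The gap is in the final identification of $D_\rho$ with $\tau$. The map produced by the ergodic theorem for $T(m)$ is $y\mapsto(\rho(y_i,y))_i$, with $y_i$ the i.i.d.\ sample points; the map that restricts to the identity on the columns is $y\mapsto(\rho(c_i,y))_i$. These are different maps, and their push-forwards of $\mu$ generally differ: already the first marginals are the laws of $\rho(y_1,\cdot)$ and $\rho(c_1,\cdot)$, which disagree for typical $y_1\ne c_1$ (take $X=[0,1]$ with $|x-y|$ and Lebesgue measure). So the identity $T(m)=T(r)$ is not merely non-tautological; it is false almost surely. Moreover, even if it held, combining it with the injectivity of $T$ would force $m=r$ a.s., i.e.\ $\tau'=\delta_r$ rather than $\tau'=\tau$, so the step ``apply $T^{-1}$, hence $\tau'=\tau$'' does not parse. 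You correctly flag the step as delicate and name the right repair --- the Aldous--Hoover representation, in which ergodicity removes the global variable and simplicity is precisely what excludes the edge variables $\xi_{ij}$, leaving $r_{ij}=g(\xi_i,\xi_j)$ with $\xi_i$ i.i.d.\ and hence $D_g=\tau$ --- but you offer it as a pointer rather than an argument. In fairness, the paper's own proof stops at the same place (``it suffices to prove that the matrix distribution of the triple is the measure $\tau$'') without supplying the details; what is still needed, in either write-up, is a proof that the columns of a $\tau$-generic matrix are distributed in the reconstructed space like an i.i.d.\ sample from $\mu$.
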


Note that the map $T$ is well  defined on a set of full $\tau$-measure.
The second condition means that one can restore the measure-metric space if one knows the joint distribution of the distance from a random point to a countable dense set of points.

 \begin{proof}
The necessity of the first condition is trivial by the definition of the matrix distribution $D_{\rho}$. Condition 2 means that
almost all matrices with respect to the measure $D_{\rho}$ can be restored if we know the empirical joint distribution of
the distances from one given point. In other words, this means that with probability 1 the coincidence of the
joint distributions of the sequence of functions of the second variable $\{\rho(x_i,\cdot)\}_i$ and  $\{\rho(x'_i,\cdot)\}_i$
implies the equality $x_i=x'_i$, $i=1,2, \dots$. But this follows from the pureness of the function $f$.

Assume that we have a simple $S_{\Bbb N}$-invariant measure $\tau$ on the cone $\cal R$. Then for $\tau$-almost all
matrices $r$, which we consider as distance matrices on~$\Bbb N$,
 we can define a metric space $X_r$, which is the completion of the
metric space $({\Bbb N}, r)$ with respect to the metric $r$, and define a unique Borel measure~$\mu_r$ on $X_r$.
The group $S_{\Bbb N}$ acts naturally on $\Bbb N$, and, consequently, this action can be extended by continuity to the space $X_r$. Denote the extension of the metric $r$ to $X_r$ by $\rho_r$.  We obtain the metric triple $(X_r, \rho_r, \mu_r)$.
Note that,  up to a measure-preserving isometry, the triple $(X_r, \rho_r, \mu_r)$ does not depend on the matrix $r$. Indeed,
if we have another matrix of the type $r'=grg^{-1}$, $g \in S_{\Bbb N}$, then the action of $g$ can be extended to $X_r$ as a measure-preserving isometry. But because of the ergodicity of the action of $S_{\Bbb N}$ in $X_r$,  for $\tau$-almost all distance matrices $r$ and for $\tau$-almost all distance matrices $r'$ we can choose in $X_r$ a dense sequence of points $\{x_i\}$ whose distance matrix is~$r'$. To complete the proof, it suffices to prove that the matrix distribution
of the triple $(X_r,\rho_r, \mu_r)$ is the measure~$\tau$.
\end{proof}

\subsection{ The problem of classification of functions of several variables and Aldous' theorem}

Now we relate this classification to a remarkable result by D.~Aldous. In his paper \cite{A81}, as well as in some subsequent papers (see, e.g., the book \cite{Kal}), the list of such invariant measures for the group $S_{\Bbb N}$ is obtained. The answer is given in terms of some measurable function in an undefined space. The missing link is  a canonical form of a function with a given matrix distribution and, more generally,  a connection to the intimately related classification problem. This classification problem plays, in a sense, the role of the dual problem. In my papers \cite{V02,V12,VH},  an example of such an approach was suggested, which looks like a generalization of the ``ergodic method.'' In the spirit of this connection, we obtain an explanation for the following question: what is the meaning of the third variable in  Aldous' theorem?
The matrix distributions of measurable functions do not exhaust the list of ergodic $(S_{\infty}\times S_{\infty}$)-invariant measures in the space of matrices $M_{\infty}(\Bbb R)$. We formulate a classification problem for which the class of
 invariants --- a new kind of matrix distributions ---
coincides with the set of all ergodic $(S_{\infty}\times S_{\infty})$-invariant measures.

\begin{problem} Consider the space of all measurable functions of three variables $(x_1,x_2,x_3) \mapsto f(x_1,x_2,x_3)$, $x_i \in X$, $i=1,2,3$ (here $(X,\mu)$ is a standard measure space) that are symmetric in the sense that $f(x_1,x_2,z)=f(x_2,x_1,z)$ with the following equivalence relation:
 $$g \sim f \Leftrightarrow g(x_1,x_2,x_3)=f(T_1 x_1, T_2 x_2, S_{x_1,x_2}x_3),$$
 where $T_1, T_2, S_{x_1,x_2}$ are measure-preserving transformations of $(X,\mu)$
 and $\{S_{x_1,x_2}\}$ is a measurable function on $(X,\mu)^2$ with values in $\Aut(X,\mu)$.

Thus, the group of automorphisms that underlies the classification is the  skew product
$$\Aut (X,\mu)^2 \rightthreetimes (X^2 \rightarrow \Aut(X,\mu)) \subset \Aut (X, \mu)^3.$$
We call this equivalence the $(2-1)$-equivalence.
\end{problem}

It is more convenient to reformulate this problem as a problem of classification of functions
of two variables with values in classes of metric equivalence of functions of one variable (roughly
speaking, with values in Borel probability measures on some Borel space).

 Let $f$ be a measurable function on the space $(X,\mu)^3$ with values in a standard Borel space $E$.
 Consider the map  $$\{x_i,z_{i,j}\}_{i,j} \mapsto \{f(x_i, x_j,z_{i,j})\}_{i,j},$$ where $\{x_i\}, \{z_{i,j}\}$ are mutually independent random variables ($z_{i,j}=z_{j,i}$) with values in $X$ and with the common distribution $\mu$. The images of the sequences under this map are matrices from the space of matrices $M_{\Bbb N}(E)$, and the image $Df$ of the measure $\mu^{N}\times\mu^{N^2}$ is called the ``matrix distribution'' of the function $f$ (in the framework of the given problem).

A generalization of Aldous' theorem can be formulated as follows.

\begin{theorem}
{\rm1.} A complete invariant with respect to the $(2-1)$ classification of measurable pure functions $f:X^3\rightarrow E$ of three variables with values in the space $E$ is the matrix distribution $Df$ of $f$.

{\rm 2.} Every ergodic $S_{\Bbb N}$-invariant (with respect to the diagonal action) measure on the space of matrices $M_{\Bbb N}(E)$ with entries in a Borel
  space $E$ is the matrix distribution of some function $f$, which is unique up to the $(2-1)$ equivalence.
\end{theorem}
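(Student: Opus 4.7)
The plan is to prove the two parts by combining the ``ergodic method'' already used in the earlier classification theorems for metric triples and for pure functions of two variables with the Aldous--Hoover representation theorem for exchangeable arrays. Part~1 is handled by a sampling/reconstruction argument, and Part~2 reduces to extracting the right normalized form of an Aldous--Hoover representation for ergodic symmetric exchangeable arrays.

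For Part~1, the forward direction is immediate: if $g(x_1,x_2,x_3)=f(T_1x_1,T_2x_2,S_{x_1,x_2}x_3)$, then since each $T_i$ and each $S_{x_1,x_2}$ preserves $\mu$, the sampling measure $\mu^{\mathbb N}\otimes\mu^{\mathbb N^2}$ pushes forward identically under the two maps $(\{x_i\},\{z_{ij}\})\mapsto\{g(x_i,x_j,z_{ij})\}$ and $(\{x_i\},\{z_{ij}\})\mapsto\{f(T_1 x_i,T_2 x_j,S_{x_i,x_j}z_{ij})\}$, so $Dg=Df$. For the converse, given a pure $f$, I would use the individual ergodic theorem to show that for $\mu^{\mathbb N}\otimes\mu^{\mathbb N^2}$-almost every sample the generated matrix encodes the joint empirical distribution of the slice functions $f(x_i,\cdot,\cdot)\colon X^2\to E$; by purity this determines the sequence $\{x_i\}$ up to a single measure-preserving transformation of the row coordinate (and symmetrically for columns), and then for each pair $(i,j)$ the third coordinate $z_{ij}$ is determined up to a measure-preserving transformation depending measurably on $(x_i,x_j)$. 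These residual ambiguities are exactly the $(2-1)$-equivalence, so applying the reconstruction to two functions $f_1,f_2$ with $Df_1=Df_2$ yields the required isomorphism.

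For Part~2, I would apply the Aldous--Hoover theorem to an ergodic $S_{\mathbb N}$-invariant probability measure $\tau$ on the space of symmetric $E$-valued matrices. This yields a representation $X_{ij}=\Phi(\alpha,\xi_i,\xi_j,\eta_{ij})$ with $\alpha$, $\{\xi_i\}$, $\{\eta_{ij}=\eta_{ji}\}$ independent uniform $[0,1]$ random variables and $\Phi$ symmetric in its two middle coordinates. Ergodicity of $\tau$ under the diagonal $S_{\mathbb N}$-action forces $\alpha$ to be measurable with respect to the (trivial) tail $\sigma$-algebra of the array, so one can condition on an almost-surely constant value and absorb $\alpha$ into $\Phi$, obtaining $X_{ij}=f(\xi_i,\xi_j,\eta_{ij})$ with $f\colon X^3\to E$ symmetric in the first two variables. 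By construction $\tau=Df$, and uniqueness up to $(2-1)$-equivalence follows from Part~1.

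The main obstacle is twofold. First, in Part~1 one must verify that the reconstruction yields exactly the $(2-1)$-equivalence and not a finer or coarser one; in particular, the \emph{measurable dependence} $(x_1,x_2)\mapsto S_{x_1,x_2}$ on the third coordinate --- rather than a single $\Aut(X,\mu)$-element --- is forced by the fact that the $z_{ij}$ are sampled independently for each pair, and tracking precisely what is and is not recoverable from an infinite sample requires care. Second, in Part~2 one must produce a symmetric Aldous--Hoover representation compatible with both $\eta_{ij}=\eta_{ji}$ and $\Phi$ symmetric in $(\xi_i,\xi_j)$; the textbook form of Aldous--Hoover does not come in this normalized shape, and a symmetrization argument (together with the removal of the $\alpha$ coordinate via ergodicity) is needed to put it there without destroying measurability. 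Once these two technical points are handled, the remaining steps amount to measure-theoretic bookkeeping analogous to the proof of the characterization theorem for matrix distributions of metrics recalled above.
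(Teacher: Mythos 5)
The paper does not actually prove this theorem: it is stated at the end of Section~5.3 as a ``generalization of Aldous' theorem,'' and the Introduction defers all proofs of new results to a forthcoming article. So there is no in-paper argument to compare against; what can be said is that your route --- the ergodic method for reconstruction in Part~1 plus the Aldous--Hoover representation, with the mixing coordinate $\alpha$ removed by ergodicity, for Part~2 --- is exactly the program the surrounding text describes (the ``missing link'' being a canonical form of a function with a given matrix distribution, and the third variable of Aldous' theorem being the $z_{ij}$ coordinate). Part~2 of your sketch is essentially the intended argument, modulo the standard symmetrization of the Aldous--Hoover kernel.

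There are, however, two concrete gaps in your Part~1. First, the forward direction is not ``immediate'' as you claim. The matrix distribution is defined by \emph{diagonal} sampling: a single sequence $\{x_i\}$ indexes both rows and columns. If $g(x_1,x_2,x_3)=f(T_1x_1,T_2x_2,S_{x_1,x_2}x_3)$ with $T_1\neq T_2$, then the sampled entry is $f(T_1x_i,T_2x_j,\cdot)$, and the pair of coupled sequences $(\{T_1x_i\},\{T_2x_i\})$ is \emph{not} obtained from $(\{x_i\},\{x_i\})$ by a measure-preserving change of variables of the sampling space; one must invoke the symmetry hypothesis $f(x_1,x_2,z)=f(x_2,x_1,z)$ and argue separately that the two row/column automorphisms can be reconciled (or work with the $(S_{\mathbb N}\times S_{\mathbb N})$-sampling, which is what the paper's lead-in suggests but not what the displayed definition of $Df$ uses). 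Second, and more seriously, the converse reconstruction is asserted rather than proved: the claim that purity lets one recover $\{x_i\}$ up to a single automorphism, and then each $z_{ij}$ up to an automorphism depending measurably on $(x_i,x_j)$, is precisely the content of the theorem, and it requires a definition of purity adapted to the three-variable $(2\textrm{--}1)$ setting (the paper only defines purity for functions of two variables). Without specifying that notion and showing that the empirical distributions of the slice functions $f(x_i,\cdot,\cdot)$ separate points modulo exactly the $(2\textrm{--}1)$ group --- no more and no less --- the argument is a plausible plan, not a proof. You correctly flag both difficulties yourself, but flagging them does not close them.
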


\subsection{Generalized metric measure spaces}

 We briefly describe a generalization of the previous classification problem and consider the starting point of the theory of nonstandard filtrations, which is closely related
 to this problem.

The set of all matrix distributions $D_f$ of measurable functions of two variables $f$ is not weakly closed in the space of all measures on the space of matrices $M_{\Bbb N}(\Bbb R)$. The same is true if we consider only metrics as functions $f$. What meaning do elements of the closure have? We will give an interpretation of limit elements; they are also matrix distributions,  but  of ``generalized,'' or ``random,'' or ``virtual'' measurable functions. We restrict ourselves only to the case when functions are metrics.

A generalized metric on a measure space $(X,\mu)$ is a symmetric random field $\xi(\cdot,\cdot)$ on $(X\times X, \mu\times\mu)$ with values in ${\Bbb R}_+$ subject to the triangle inequality $\xi(x,y)+\xi(y,z)\geq \xi(x,z)$ almost everywhere.
We can regard  this field in the traditional way as a function of three variables: $\xi(x,y)=f(x,y,\omega )$, where $\omega \in \Omega$, with $\Omega$ being an ``indeterminate space,'' and $\nu$ is a measure on~$\Omega$.  We define the {\it generalized matrix distribution} $D_{\xi}$ as the measure on the space   $M_{\Bbb N}(\Bbb R)$ that is the image of the measure $\mu^{\infty}\times\nu^{{\infty}^2}$ under the map
$$\{x_i\}_i\times \{\omega_{i,j}\}_{i,j} \rightarrow \{\xi(x_i,x_j,\omega_{i,j})\}_{i,j}.$$

If $\xi$ is not a deterministic function (i.e., $\omega$ is not a function of $x_i,x_j$), then this generalized matrix distribution is not a matrix distribution of any measurable function.\footnote{The correct definition of a random field on a measure space is not in the spirit of Kolmogorov's definition of random processes, but is more similar to, although not the same as, the definition of Gelfand--Ito generalized processes. The main
point is just the existence of a generalized matrix distribution.}

\begin{statement}
Every generalized matrix distribution is an $S_{\Bbb N}$-invariant ergodic measure on $M_{\Bbb N}(\Bbb R)$.
The set of all generalized matrix distributions is a weakly closed subset of the space of probability measures on $M_{\Bbb N}(\Bbb R)$.
\end{statement}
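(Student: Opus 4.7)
The plan is to handle the two assertions separately: invariance and ergodicity of $D_\xi$ follow from transport along an equivariant factor map of an i.i.d. source, while weak closedness reduces, via the generalization of Aldous' theorem stated just above, to showing that ergodicity of an $S_{\mathbb N}$-invariant measure on $M_{\mathbb N}(\mathbb R)$ is preserved under weak limits in this setting.

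First I would set up the source. The measure $\mu^{\infty} \times \nu^{\infty^{2}}$ on $X^{\mathbb N} \times \Omega^{{\mathbb N}^2}$ is a product of identical copies, hence is invariant under the diagonal $S_{\mathbb N}$-action that simultaneously permutes the $x_i$'s and the unordered pairs of indices in $\omega_{i,j}=\omega_{j,i}$; it is moreover ergodic for this action by the Hewitt--Savage zero--one law. The map $F_\xi\colon(\{x_i\},\{\omega_{i,j}\})\mapsto\{\xi(x_i,x_j,\omega_{i,j})\}_{i,j}$ intertwines the source $S_{\mathbb N}$-action with the coordinate $S_{\mathbb N}$-action on $M_{\mathbb N}(\mathbb R)$ by the very definition of the diagonal action. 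Consequently $D_\xi=(F_\xi)_*(\mu^{\infty}\times\nu^{\infty^{2}})$ is $S_{\mathbb N}$-invariant, and ergodicity of the source is transferred to $D_\xi$ under the equivariant factor map. This settles the first assertion.

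For the second assertion, let $D_{\xi_n}\to D$ weakly in $P(M_{\mathbb N}(\mathbb R))$. Since each element of $S_{\mathbb N}$ acts as a homeomorphism of the product space $M_{\mathbb N}(\mathbb R)$, weak invariance is preserved in the limit, so $D$ is $S_{\mathbb N}$-invariant. By the generalization of Aldous' theorem in the preceding theorem of the paper, every ergodic $S_{\mathbb N}$-invariant measure on $M_{\mathbb N}(\mathbb R)$ is the matrix distribution of some measurable function of three variables, i.e.\ a generalized matrix distribution. Hence it suffices to show that $D$ is ergodic, after which $D=D_\xi$ for some $\xi$ and we are done.

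Ergodicity of $D$ I would establish by the following ``Bauer-type'' argument. Each $D_{\xi_n}$ is a dissociated symmetric exchangeable array in the Aldous--Hoover sense: conditionally on the latent variables $x_i$, its entries are independent. This dissociation can be phrased as a countable family of identities on joint moments of bounded continuous test functions on disjoint finite blocks of the matrix, all of which are closed conditions in the weak topology. The weak limit $D$ therefore inherits the same dissociation, and a standard de Finetti/Aldous--Hoover type argument then identifies dissociation with triviality of the $S_{\mathbb N}$-invariant $\sigma$-algebra, which is exactly ergodicity. The main obstacle I expect is a clean, self-contained verification of this ``dissociation is closed, and dissociation implies ergodicity'' step; once this is granted (or imported from the Aldous--Hoover--Kallenberg theory cited in the text), combining it with the preceding generalization of Aldous' theorem closes the proof.
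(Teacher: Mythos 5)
The paper states this proposition without proof (the Introduction defers proofs of the new results to a forthcoming article), so there is no in-paper argument to compare yours against; judged on its own terms, your proposal is correct and is, I believe, the intended argument. The first assertion is indeed the routine push-forward: the source $\mu^{\infty}\times\nu^{\infty^{2}}$ is a product measure over the index set consisting of $\mathbb N$ together with the unordered pairs from $\mathbb N$, the map $F_\xi$ is $S_{\mathbb N}$-equivariant, and factors of ergodic systems are ergodic. (One quibble: what you need is ergodicity of a product measure under a group acting on the index set so that every finite set of indices can be moved off itself; prove that by the standard cylinder-approximation argument or cite it directly, rather than invoking Hewitt--Savage, which literally concerns sequences.) For the second assertion you have isolated exactly the right mechanism: the set of ergodic invariant measures is in general only a $G_\delta$, but here ergodicity of an exchangeable array is equivalent to dissociation; each $D_\xi$ is dissociated because disjoint principal subarrays are built from disjoint collections of the independent variables $x_i,\omega_{i,j}$; dissociation is a family of weakly closed moment identities; and a dissociated exchangeable limit is ergodic, hence a generalized matrix distribution by the generalization of Aldous' theorem in Section~5.3. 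This is the Bauer-simplex phenomenon that makes the closedness claim true at all. Two points should be made explicit rather than imported wholesale: the implication ``dissociated implies ergodic'' is the crux and has a two-line proof (approximate an invariant event by a cylinder event over a finite index block, apply a permutation carrying that block to a disjoint one, and use dissociation to get $P(A)=P(A)^2$); and, since the surrounding text restricts to generalized metrics, you should add that the weak limit remains supported on the closed cone $\mathcal R$ of distance matrices (portmanteau inequality for closed sets), so the representing function of three variables satisfies the triangle inequality almost everywhere and the limit is again the generalized matrix distribution of a generalized metric.
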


A typical situation when a generalized metric appears is as follows  (we met it when we introduced the notion of a standard graph in Section 3).

Consider a sequence of admissible triples (= admissible measure metric spaces) $(X,\rho_n,\mu)$; assume that the
space and the measure are fixed and the metric varies. Assume that $\{\rho_n\}_n$, regarded as a sequence of functions of two variables, does not converge literally, but the sequence of its distributions (with respect to the measure $\mu\times\mu$), regarded as a sequence of measures on ${\Bbb R}_+$, does weakly converge. More generally, assume that the sequence of matrix distributions $D_{\rho_n}$ weakly converges to a measure $D_{\rho_{\infty}}$. Then the limit measure can be or not the matrix distribution
of some admissible triple. In other words, a measure metric space is a generalized space, but not all generalized spaces are ordinary
measure metric spaces. So, the notion of a generalized matrix distribution is a generalization of the notion of a matrix distribution.

\begin{conjecture}
Consider a graded graph $\Gamma$ and an ergodic central measure~$\mu$ on the path space $T(\Gamma)$. Let $\rho=\rho_0$
be a metric on  $T(\Gamma)$, and assume that  $(T(\Gamma),\rho, \mu)$
is an admissible triple. Then the sequence of spaces $(T(\Gamma),\rho_n, \mu)$ converges to a generalized admissible triple. If the graph is standard, then,  by definition, the limit is again an admissible triple; if the tail filtration with respect to a central measure on a nonstandard graph is not standard, then the limit is a generalized but not ordinary metric triple.
\end{conjecture}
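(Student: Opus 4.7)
The plan is to interpret the convergence of admissible triples $(T(\Gamma), \rho_n, \mu)$ in the Gromov sense, that is, as weak convergence of their matrix distributions $D_{\rho_n}$ in the space of probability measures on $M_{\mathbb N}(\mathbb R_+)$, and then to invoke the classification results of Section~5 together with the standardness dichotomy of Section~4.5 to identify the limit. Since $\mu$ is central, the cocycle is trivial and the metrics $\rho_n$ coincide with the iterates $\bar\rho_n$ of Section~4.3, so we may work with either.

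First I would show that $\{D_{\rho_n}\}_n$ is weakly precompact and that every weak limit lies in the set of ergodic $S_{\mathbb N}$-invariant measures on $M_{\mathbb N}(\mathbb R_+)$. Truncating $\rho_0$, we may assume it is bounded; by the monotonicity and linearity of the Kantorovich contraction (Section~3.6) this forces $\rho_n \le \sup \rho_0$ uniformly, so the matrix entries live in a fixed compact interval, yielding tightness. The measure $\mu^{\otimes\mathbb N}$ is exchangeable, so $S_{\mathbb N}$-invariance of each $D_{\rho_n}$ is automatic; ergodicity follows from the Hewitt--Savage zero--one law applied to the iid sample $\{x_i\}$. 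Because the set of generalized matrix distributions is weakly closed (Proposition of Section~5.4), every weak limit is automatically a generalized matrix distribution and hence defines a generalized admissible triple.

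Second, I would promote weak subsequential convergence to actual convergence of $D_{\rho_n}$ to a single limit $D_\infty$. The key ingredient is that, for a central measure, the Kantorovich contraction $\rho_n \mapsto \rho_{n+1}$ can be analysed entrywise: because the conditional measures of $\xi_n$ are uniform, the random variables $\rho_n(x,y)$ on $(T(\Gamma)^2, \mu^{\otimes 2})$ behave like a bounded reverse supermartingale adapted to the decreasing product filtration $\xi_n \otimes \xi_n$, and Doob's theorem gives $(\mu\otimes\mu)$-almost sure convergence to a limit $\rho_\infty(x,y)$. Joint convergence of any finite block $\{\rho_n(x_i,x_j)\}_{i,j \le N}$ then follows from bounded convergence applied to iid tuples, which is precisely weak convergence of $D_{\rho_n}$ to the single limit $D_\infty$.

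To finish, I would split on standardness via the Statement at the end of Section~4.5. If the graph is standard at $\mu$, then $\rho_\infty(x,y) = \lim \rho_n(x,y)$ is a deterministic semimetric on $T(\Gamma)$, $D_\infty$ is supported on a point of the cone of distance matrices, and the characterization theorem of Section~5.2 identifies $D_\infty$ with $D_{\rho_\infty}$ for the honest admissible triple $(T(\Gamma), \rho_\infty, \mu)$. In the nonstandard case, the same Statement says $\lim \rho_n(x,y)$ fails to be a deterministic function of $(x,y)$ on a set of positive $(\mu\otimes\mu)$-measure, so $D_\infty$ violates the ``simplicity'' (reconstructibility from empirical distributions of columns) condition in the characterization theorem and is therefore a strictly generalized matrix distribution.

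The step I expect to be the main obstacle is the reverse martingale claim in the second paragraph: the Kantorovich contraction interacts with conditional expectation only via an inequality in general, and the coincidence of $\rho_n$ with $\bar\rho_n$ under centrality has to be leveraged carefully to turn that inequality into the averaging identity required for the martingale argument. An alternative route, should that fail, is to reinterpret the iteration $\rho \mapsto K_{\rho}(\cdot,\cdot)$ as a contraction on the Polish space of generalized matrix distributions with respect to an appropriate Kantorovich--Rubinstein norm and then extract $D_\infty$ as the unique fixed point attracting every initial metric; this would simultaneously give existence, uniqueness, and independence of the limit from the choice of $\rho_0$.
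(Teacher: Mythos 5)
The statement you are proving is labelled a \emph{Conjecture} in the paper, and the paper offers no proof of it: the author only remarks that he ``can confirm this conjecture in several cases'' and defers the question to future publications. So there is no argument of the paper's to compare yours against; what follows is an assessment of your proposal on its own terms.

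The framework of your first paragraph is sound and is clearly the intended one: encode the mm-spaces $(T(\Gamma),\rho_n,\mu)$ by their matrix distributions $D_{\rho_n}$, use boundedness of the iterated metrics to get weak precompactness, and use the Proposition of Section~5.4 (the set of generalized matrix distributions is weakly closed) to conclude that every subsequential limit is a generalized matrix distribution. That gives subsequential convergence to generalized admissible triples, which is a genuine partial result. The gap is exactly where you predicted it, and it is fatal to the argument as written. Your second paragraph claims that $\rho_n(x,y)$ is a bounded reverse supermartingale and hence converges $(\mu\times\mu)$-almost surely. But the paper's own Proposition at the end of Section~4.5 states that for a \emph{nonstandard} central ergodic measure the limit $\lim_n\rho_n(x,y)$ does \emph{not} exist for almost all pairs, and that if the limit exists the measure is standard. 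So if your martingale step were valid it would prove that every ergodic central measure is standard, contradicting the nonstandard examples the paper exhibits (the graph $UP$, the random walk over a Bernoulli action of a free group). The reason the step fails is that the Kantorovich infimum gives only a one-sided inequality against the conditional expectation, and the couplings realizing the infimum at level $n+1$ are constrained (for the iterates $\bar\rho_n$ relevant to standardness they must respect the whole tree of partitions) and need not refine the optimal couplings at level $n$; there is no single measure on the product space with respect to which the $\rho_n$ form a backward martingale. Consequently your proof establishes convergence only along subsequences, whereas the whole content of the conjecture in the nonstandard case is that the distributional limit exists even though the pointwise limit does not, and that it fails the simplicity (reconstruction) condition. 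Your fallback --- realizing the iteration as a strict contraction with a unique attracting fixed point on the space of generalized matrix distributions --- cannot work either: the operation is merely $1$-Lipschitz (monotone and linear in the metric, as noted in Section~3.6), and a unique globally attracting fixed point would make the limit independent of $\mu$ and of $\Gamma$, which it is not. Finally, a small point: in the standard case the paper's Proposition forces the limit semimetric to be identically zero almost everywhere, so the limiting admissible triple is the degenerate one-point space; your description of $D_\infty$ as supported at a single distance matrix is consistent with this, but you should make explicit that the ``honest admissible triple'' you recover is this degenerate one.
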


I can confirm this conjecture in several cases. It is important that some characteristics of measure metric spaces
still exist for generalized admissible triples, for example,  $\varepsilon$-entropy. We will return to this very interesting question in future publications; see \cite{2015}.

This analysis should make clear the following comment to the statement of Aldous' theorem in the previous section:
every ergodic $S_{\infty}$-invariant measure on the space of symmetric matrices is the generalized matrix distribution of a generalized measurable function.

\newpage
\section{Examples: the exit boundaries for the Pascal and Young graph, for the dynamic tree, and for random subgroups}

 In this section, we will consider several examples.
 Some of them are old, but the point of view is rather new.
 We consider the absolute boundary for the Pascal graph of an arbitrary dimension and the Young graph. All these graphs are standard. We formulate a conjecture on the entropy of standard central measures on the path space which generalizes the old entropy conjecture from \cite{VeK85}.  We consider Thoma's theorem on the characters of the infinite symmetric group from the point of view of {\it totally nonfree actions of the infinite symmetric group}. A new result on the phase transition and
 the loss of ergodicity in the absolute boundary problem on the dynamical graph of a tree is presented.  Finally, we mention a
 link between central measures and factor representations of type II$_1$ for the infinite symmetric group.
We also mention results on ``random subgroups'' (IRS) and their relation to totally nonfree actions. Also, we give a link between these questions and the theory of factor representations.

There are many papers and results on list of characters and invariant measures for graphs, groups, etc. These results constitute the
foundation of the new area of the representation theory of groups and algebras, the asymptotic representation theory, which has many intersections with combinatorics, probability theory, etc.  We mention only several works in this area, where one can find further references: \cite{ICM, Kerov, Olsh, Ok, VO, CST}.

\subsection{The absolute boundary for the Pascal graph of an arbitrary dimension}

Consider the infinite Pascal triangle $\Gamma$ as a graded graph
(see Figure~\ref{fig:Pascal}). The space $T(\Gamma)$ of infinite paths is, in a natural sense, isomorphic to the product $X=\prod_{n=1}^{\infty}\{0;1\}$.

\begin{theorem}
The absolute boundary $\Erg(\Gamma)$ for the infinite Pascal triangle is the unit interval $[0,1]$.
More exactly, every ergodic central  measure $\mu$  on $T(\Gamma)$ is a Bernoulli measure on $X$: $\mu=\mu_p=\prod_{n=1}^{\infty} (p,1-p)$  with $ p\in [0,1].$
\end{theorem}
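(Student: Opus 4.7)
My plan is first to identify the path space $T(\Gamma)$ of the infinite Pascal triangle with the product space $\{0,1\}^{\mathbb{N}}$: at any vertex there are two descending edges, which I label $0$ and $1$ according to which of the two coordinates is incremented. Under this identification, two paths are tail-equivalent precisely when they agree outside a finite initial segment on which they contain the same number of $1$'s, and this is exactly the orbit equivalence relation of the diagonal action of the finite symmetric group $S_{\infty}$ on $\{0,1\}^{\mathbb{N}}$ by coordinate permutations. Since a central measure is by definition uniform on paths into each vertex $v_{n,k}$, central measures on $T(\Gamma)$ correspond bijectively to finitely exchangeable probability measures on $\{0,1\}^{\mathbb{N}}$, and ergodic central measures correspond to $S_\infty$-ergodic exchangeable measures.

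At this point the classical de Finetti--Hewitt--Savage theorem yields immediately that the ergodic exchangeable measures are exactly the Bernoulli measures $\mu_p = \prod(p,1-p)$ for $p \in [0,1]$, giving the desired description of $\Erg(\Gamma)$. Alternatively, one can stay entirely inside the paper's framework by applying the extremality criterion (Proposition~\ref{prop5}): with $\dim(v_{n,k}) = \binom{n}{k}$, the canonical cotransition probabilities are $\lambda^{v_{n,k-1}}_{v_{n+1,k}} = k/(n+1)$, which is simply the empirical frequency of $1$'s up to time $n+1$. For an ergodic central measure, the criterion forces the projections $\mu_n^m$ to concentrate near a single vertex of $\Sigma_n$, which means the frequency $k/n$ must converge in probability, and in fact almost surely by the reverse martingale argument, to some deterministic constant $p \in [0,1]$.

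It then remains to verify that $p$ determines the measure. Using centrality, the probability of any cylinder corresponding to a finite binary word $w$ of length $n$ with $k$ ones equals $\mu_n(v_{n,k})/\binom{n}{k}$, which depends only on the count $k$; concentration of $k/n$ near $p$ at level $n$ forces this value to tend to $p^{k}(1-p)^{n-k}$, so $\mu=\mu_p$. The converse is routine: each $\mu_p$ is manifestly central (cylinder probabilities depend only on the number of $1$'s), and is $S_\infty$-ergodic by Kolmogorov's $0$--$1$ law; distinct values of $p$ are separated by the strong law of large numbers, so the parametrization $p \mapsto \mu_p$ is a homeomorphism from $[0,1]$ onto $\Erg(\Gamma)$. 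The main obstacle along the purely intrinsic route is passing from concentration of the single statistic $k/n$ to concentration of the full measure on all cylinders, but this is free here because centrality reduces every cylinder probability to a function of the level-$n$ marginal together with the combinatorial factor $1/\binom{n}{k}$; in less rigid graphs such as $OP$ and $UP$, no analogous reduction is available, which is precisely the standardness obstruction discussed in Section~4.
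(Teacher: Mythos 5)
Your first argument --- identifying $T(\Gamma)$ with $\{0,1\}^{\mathbb N}$, recognizing centrality as exchangeability under the coordinate action of $S_{\mathbb N}$, and invoking de Finetti--Hewitt--Savage --- is exactly the proof the paper gives, and it is correct. The additional intrinsic route via the extremality criterion and concentration of the frequency $k/n$ is a sound supplement but not needed; the paper stops at de Finetti.
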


Indeed, in terms of the space $X$, the centrality of a measure on $T(\Gamma)$  is the invariance of the measure under the action of the infinite symmetric group $S_{\Bbb N}$ by permutations of coordinates. The classical de Finetti theorem gives the required result.

For the same reason, the absolute boundary of the $d$-dimensional Pascal graph $ ({\Bbb Z}_+)^d$ is the $(d-1)$-dimensional simplex
(here Bernoulli measures are of the form $(p_1,p_2,\dots, p_d)$, $\sum_i p_i=1$,  $p_i\geq 0$, $i=1, \dots, d-1$).

A much more complicated example of calculating the absolute boundary, which we briefly discuss below, is for the
so-called Young graph, the graph of Young diagrams.

All these examples of graded graphs are the so-called Hasse diagrams of lattices which are the
lattices of finite ideals of some posets, and hence
distributive. We want to formulate a conjecture on the absolute boundary in this situation. Recall that every distributive lattice $L$ is the lattice $L=L(Y)$ of all finite ideals of a poset $Y$.

\begin{conjecture}
The absolute boundary of a graded graph $\Gamma$ that is the Hasse diagram of a distributive lattice $L(Y)$ is the set of all monotone positive functions $f\leq 1$ on the space of all minimal infinite ideals of the poset $Y$. \end{conjecture}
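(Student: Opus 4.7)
The plan is to combine Birkhoff's representation theorem with the ergodic method of Vershik--Kerov described in Sections 3 and 4. By Birkhoff, an infinite path in the Hasse diagram $\Gamma$ of $L(Y)$ corresponds to a chain $\emptyset = I_0 \subset I_1 \subset \cdots$ of finite ideals of $Y$ with $|I_n|=n$, or equivalently to a linear extension of the (generally infinite) ideal $I_\infty = \bigcup_n I_n \subseteq Y$. A central measure assigns to each finite ideal $I$ the uniform conditional distribution on the $\dim(I)$ linear extensions of $I$, so $\Sigma(\Gamma)$ coincides with the simplex of harmonic functions for the canonical cotransition probabilities $\lambda_I^{I'} = \dim(I')/\dim(I)$. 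The goal is then to exhibit a bijection between $\Erg(\Gamma)$ and monotone positive $f\le 1$ on the set $\mathcal{M}$ of minimal infinite ideals of $Y$.

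First, I would construct, for each such $f$, a central measure $\mu_f$: the function $f$ prescribes the asymptotic frequencies with which the random growing ideal $I_n$ recruits new elements along each of the ``directions to infinity'' $M\in\mathcal{M}$, and within each direction the uniform linear extension of the corresponding chain provides a consistent cotransition system. Distributivity of $L(Y)$ guarantees that the local choices at different elements commute (the classical ``commuting squares'' of ideal growth), which is precisely what makes the resulting Markov measure well-defined and central. For the converse, given an extremal $\mu$, I would apply the ergodic method: for $\mu$-a.e.\ path, the ratios $\dim(J)/\dim(I_n)$ converge to a harmonic function $\varphi_\mu$, and for each $M\in\mathcal{M}$ the limit of the fraction of elements of $I_n$ belonging to~$M$ defines $f(M)$. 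Monotonicity of $f$ in the natural order on $\mathcal{M}$ (induced by inclusion of ideals, or by comparability of ``rays'') should follow from the fact that asymptotic densities are monotone in the ideal containing them. The Pascal case ($Y$ a disjoint union of chains) reduces to de Finetti's theorem, and the Young case ($Y=\mathbb{Z}_+^2$, with $\mathcal{M}$ accounting for the row and column rays) should recover Thoma's parametrization, providing important sanity checks.

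The main obstacle is completeness of the parametrization, i.e.\ proving that every extremal central measure arises from such an $f$. By the criterion of Section~4, this amounts to establishing standardness of the tail filtration of $\Gamma$: the iterated Kantorovich semimetrics $\bar\rho_n$ must converge to zero on $(\mu\times\mu)$-typical pairs for every ergodic central $\mu$. The distributivity of $L(Y)$ is expected to give the required asymptotic independence, because commuting diamonds in the Hasse diagram allow one to construct efficient couplings between initial segments of paths; but turning this geometric intuition into a uniform estimate for $\bar\rho_n$ in the generality of an arbitrary poset~$Y$ is the technically hardest step. A secondary difficulty is purely foundational: when $Y$ is not locally finite, $\mathcal{M}$ carries a natural Borel structure and the condition ``$f\le 1$'' should probably be read as a normalization on a measure-valued version of $f$ (consistent with $\sum\alpha_i+\sum\beta_j\le 1$ in Thoma's theorem and with $\sum p_i = 1$ in the $d$-dimensional Pascal case), so the precise statement of the conjecture must be calibrated before the proof can be finalized.
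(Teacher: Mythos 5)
This statement is labelled a \emph{conjecture} in the paper, and the paper gives no proof of it: the author explicitly says that even for the multidimensional Young lattices the conjecture ``is close to being proved'' but not yet established. So there is no argument in the paper to compare yours against; the only fair assessment is whether your proposal actually closes the gap, and it does not.

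Your outline correctly identifies the known strategy (Birkhoff's representation, the canonical cotransition probabilities $\dim(I')/\dim(I)$, de Finetti for the Pascal case, Thoma's theorem for the Young graph, and standardness as the route to completeness), but every step that is not already a known special case is asserted rather than proved. Concretely: (1) the construction of $\mu_f$ from a monotone $f$ is not carried out --- the claim that distributivity makes ``the local choices commute'' and hence yields a well-defined central measure is not an argument; even in the Young case, producing the central measure with prescribed Thoma parameters requires the explicit character formulas or an RSK-type construction, not a formal commutation property of the lattice. (2) The completeness step is reduced to standardness of the tail filtration of $\Gamma$ for an arbitrary distributive lattice, which is precisely the open part; you acknowledge you have no estimate for the iterated metrics $\bar\rho_n$. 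Moreover, even granting standardness, you would still need to show that the frequency data $f$ is a \emph{complete} invariant, i.e.\ that two ergodic central measures with the same $f$ coincide; for the Young graph that injectivity is essentially the content of Thoma's theorem and is not a consequence of concentration alone. (3) The existence of the frequency limits $f(M)$ along a.e.\ path is itself nontrivial, and your parametrization does not obviously account for the ``defect'' parameter (the $\gamma=f(\infty)$ in Thoma's simplex, which is not the frequency of any ray of $Y$ but the mass $1-\sum\alpha_i-\sum\alpha_{-i}$); the paper's formulation handles this by including $\infty$ among the minimal infinite ideals, and your measure-theoretic recalibration of ``$f\le 1$'' would have to be made precise before the statement being proved is even fixed. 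In short: this is a reasonable research program restating the conjecture's supporting evidence, not a proof.
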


For example, the Young lattice is the lattice of finite ideals of $[{\Bbb Z}_+]^2$, and infinite minimal ideals are unions  of rows and columns. Thus, a monotone function is $f:\Bbb N\cup\Bbb N\cup \{\infty\}\rightarrow [0,1]$, its values are
$f(n,0)=\alpha_n$, $f(0,n)=\alpha_{-n}$, $f(\infty)=\gamma$, where $\{\alpha_{\pm n},\gamma\}$ are Thoma's parameters. The characters of the infinite symmetric group are parameterized by the frequencies of rows and columns of increasing sequences of Young diagrams.
For the multidimensional Young lattices, this conjecture was suggested by Vershik and Kerov in the 1980s, but only now it is close to being proved.

\subsection{Dynamic graphs, or pascalization, and random walks on groups}
Now we consider a very interesting
and new result on the absolute boundary of the dynamic Cayley graph of a group with finitely many generators. We start with the definition of the following operation on graphs.

 Let $\gamma$ be an arbitrary connected locally finite graph without multiple edges with a distinguished vertex~$v_0$. The
graph $\Gamma (\gamma,v_0)$, called the \emph{dynamic graph}, or the pascalization, of the graph $\gamma$, is the  $N$-graded graph whose $n$th level is a copy of the set of vertices of $T$ connected with the distinguished vertex $v_0$ by walks
of length~$n$ (or, which is the same, by paths of length at most~$n$ and of the same parity as~$n$); the unique vertex of the zero level will be denoted by~$\varnothing$. Two vertices in $\Gamma (\gamma,v_0)$ are adjacent if and only if they lie at adjacent levels and their copies are connected by an edge in the graph $T$. If a connected graph $\gamma$ has no odd cycles, then choosing the initial vertex turns it into an $\Bbb N$-graded graph for which the dynamic graph $\Gamma (\gamma,v_0)$ is defined in~\cite{VN} and called the  \emph{pascalization}\footnote{The term is due to the fact that if $\gamma$ is the chain (i.e.,  $\gamma=\{n\in \Bbb Z\}$ with the grading
 $n\rightarrow |n|\in \Bbb N$), then the ``pascalization'' of $\gamma$ is the Pascal graph (the infinite Pascal triangle). As shown in~\cite{VN}, the branching graph of the infinite-dimensional Brauer algebra is the pascalization of the Young graph; for another example of pascalization, see~\cite{GK}.} of $T$.

\def\Cay{\operatorname{Cay}}

If $G$ is a countable group with a fixed finite collection of generators $A=A^{-1}$, then $\Cay(G,A)$ is the Cayley graph of $G$ with respect to the set of generators $A$.

Now we apply the above  construction with  $\gamma$ being the Cayley graph of the group $G$ and obtain a graded graph which we denote by $\Gamma(G,A)$; we will call it the  {\it dynamic graph}, or the {\it pascalization}, of the Cayley graph $\Cay(G,A)$. We may say also that this is the graph of the simple random walk on the Cayley graph $\Cay(G,A)$.

For $q>0$, let $T_{q+1}$ be a tree  of valence~$q+1$. The case  $q=2k$ corresponds to the Cayley graph of the free group with $k$ generators.  We consider the simple random walk starting from the origin and having equal probabilities of all edges. The graded graph $\Gamma (T_{q+1},v_0)$   is an  $\Bbb N$-graded graph whose $n$th level is a copy of the set of vertices of $T_{q+1}$ connected with the distinguished vertex $v_0$ by walks of length $n$.
We want to find the set $\Erg (\Gamma(T_{q+1}))$ of all central measures of the graph $\Gamma (T_{q+1},v_0)$, or, in other words, the absolute boundary $\Erg (\Gamma(T_{q+1}))$ of this dynamic graph $\Gamma (T,v_0)=\Gamma(T_{q+1})$.

\begin{theorem}[Vershik--Malyutin \cite{VM2015}]
For $q\ge2$, the set $\Erg (\Gamma(T_{q+1},v_0))$ of all ergodic central measures on the space $T(\Gamma(T_{q+1},v_0))$ of infinite paths in the dynamic graph $\Gamma(T_{q+1},v_0)$ over the $(q+1)$-homogeneous tree~$T_{q+1}$ (i.e., the absolute  boundary) coincides with the following family of Markov measures:
\begin{equation*}
\Lambda_q:=\left\{\lambda_{\omega,r} \mid \omega\in\partial T_{q+1}, r\in\left[1/2,1\right]\right\}.
\end{equation*}
Thus, the absolute boundary is homeomorphic (in the weak topology) to the product
$$
\partial T_{q+1} \times \left[1/2,1\right].
$$
\end{theorem}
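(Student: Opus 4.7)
My strategy is to identify ergodic central measures on $T(\Gamma(T_{q+1},v_0))$ with the minimal positive space-time harmonic functions for the simple random walk (SRW) on $T_{q+1}$, and then to exhibit the family $\Lambda_q$ as realizing every such extreme point. A path in $\Gamma(T_{q+1},v_0)$ is a nearest-neighbor walk $(v_0,v_1,v_2,\ldots)$ in $T_{q+1}$, and tail equivalence is eventual coincidence; a central measure $\mu$ is therefore determined by the function $\phi(v,n)$ giving the $\mu$-mass of any single cylinder of length $n$ ending at $v$, and centrality together with level-to-level consistency amounts to
\[
\phi(v,n)=\sum_{u\sim v}\phi(u,n+1),\qquad \phi(\emptyset)=1,
\]
with forward transitions $P(v\to u)=\phi(u,n+1)/\phi(v,n)$. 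Setting $h(v,n)=(q+1)^n\phi(v,n)$ converts this into the assertion that $h$ is a positive space-time harmonic function for SRW normalized at $(v_0,0)$, so the absolute boundary $\Erg(\Gamma(T_{q+1},v_0))$ is in bijection with the minimal parabolic Martin boundary of the SRW on $T_{q+1}$.

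For the construction, let $b_\omega$ be the Busemann function attached to $\omega\in\partial T_{q+1}$. A direct calculation shows that for every $s>0$,
\[
\phi^{(s)}_\omega(v,n)=\Bigl(\frac{s}{s^2+q}\Bigr)^{n}s^{-b_\omega(v)}
\]
is harmonic, and that the induced Markov chain moves from $v$ to the unique neighbor with smaller Busemann value (the ``$\omega$-parent'') with probability $r=s^2/(s^2+q)$, and to each of the remaining $q$ neighbors with probability $(1-r)/q$. Setting $\lambda_{\omega,r}:=\phi^{(s(r))}_\omega$, the Busemann coordinate $-b_\omega(W_n)$ performs a biased walk on $\mathbb{Z}$ with drift $2r-1$: for $r\in(1/2,\,1]$ it drifts to $+\infty$, so $W_n\to\omega$ almost surely and $\lambda_{\omega,r}$ is ergodic (with $r=1$ giving the deterministic geodesic); for $r=1/2$ the drift vanishes but the $q$-fold transverse branching still makes $\omega$ the almost sure asymptotic direction and keeps the measure ergodic; for $r<1/2$ the walker converges almost surely to a random $\omega'\ne\omega$, so $\lambda_{\omega,r}$ is then a nontrivial mixture and not extreme. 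This explains why the admissible range is exactly $r\in[1/2,\,1]$.

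For the converse, let $\mu$ be any ergodic central measure. Since $q\ge 2$, $\mu$-a.e.\ trajectory is transient in $T_{q+1}$ and hence converges to some $\omega(W)\in\partial T_{q+1}$; applying the reverse martingale convergence theorem to the Markov cocycle of $\mu$ further yields a well-defined asymptotic ``toward-$\omega(W)$ rate'' $r(W)\in[1/2,\,1]$, and ergodicity forces $(\omega,r)$ to be $\mu$-a.s.\ constant. To conclude $\mu=\lambda_{\omega,r}$ one restricts the harmonicity equation to the geodesic toward $\omega$, where it collapses to a two-term linear recurrence in the Busemann coordinate whose only non-negative minimal solutions are the one-parameter family $\phi^{(s)}_\omega$; combined with the minimality of $\mu$, this forces the multiplicative form and hence $\mu=\lambda_{\omega,r}$. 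Weak continuity and injectivity of the parametrization $(\omega,r)\mapsto\lambda_{\omega,r}$ are immediate from the explicit Markov kernels and from the recoverability of $(\omega,r)$ from a typical sample path, giving the claimed homeomorphism onto $\partial T_{q+1}\times[1/2,\,1]$. The main obstacle is the converse step: ruling out exotic extreme positive space-time harmonic functions not of the multiplicative form $\rho^n s^{-b_\omega}$, and, in tandem, handling the delicate borderline $r=1/2$, where the walker's transverse diffusion must nonetheless pin down $\omega$ as a tail-measurable invariant.
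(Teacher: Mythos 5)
The survey does not actually prove this theorem; it only states it with a reference to \cite{VM2015} and sketches the surrounding picture (central measures correspond to positive eigenfunctions of the tree Laplacian, the ergodic ones to minimal eigenfunctions with eigenvalue at least $2\sqrt q$, and ergodicity is lost when $r$ drops below $1/2$). Your framework is exactly this one translated into space-time harmonic functions: the identification of a central measure with the cylinder-mass function $\phi(v,n)$ satisfying $\phi(v,n)=\sum_{u\sim v}\phi(u,n+1)$ is correct, your computation of the kernels ($r=s^2/(s^2+q)$ to the $\omega$-parent, $(1-r)/q$ to each other neighbor, with $\rho=s/(s^2+q)$) is right, and your argument that $r<1/2$ gives a \emph{non}-ergodic measure is sound, since the limit end of the trajectory is a tail-invariant function which is then genuinely random. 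So the skeleton and the formulas match the intended proof. However, there are two genuine gaps. First, ergodicity of $\lambda_{\omega,r}$ for $r\in[1/2,1]$ does not follow from $W_n\to\omega$ almost surely: convergence to a deterministic end only shows that \emph{one} tail-measurable function is constant, whereas extremality requires triviality of the entire tail $\sigma$-algebra (equivalently, minimality of $\phi^{(s)}_\omega$ among positive space-time harmonic functions). At the borderline $r=1/2$ even the convergence $W_n\to\omega$ needs a real argument — the correct one is that the tree walk is transient while $\liminf_n b_\omega(W_n)=-\infty$ by recurrence of the coordinate walk, which rules out every end other than $\omega$; ``transverse branching'' by itself proves nothing.

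Second, and more seriously, the converse direction is not established. You assert without proof that $\mu$-a.e.\ trajectory of an \emph{arbitrary} ergodic central measure is transient, and that a ``toward-$\omega$ rate'' $r(W)$ exists almost surely; neither is automatic. And the final step — that harmonicity ``restricted to the geodesic toward $\omega$ collapses to a two-term linear recurrence'' whose minimal solutions are the $\phi^{(s)}_\omega$ — does not work as stated: a positive space-time harmonic function on the tree is not determined by its restriction to a single ray, and nothing forces $\phi(v,n)$ a priori to depend on $v$ only through the Busemann coordinate $b_\omega(v)$. Ruling out extreme space-time harmonic functions that are not of the multiplicative form $\rho^{\,n}s^{-b_\omega(v)}$ is precisely the identification of the minimal parabolic Martin boundary of $T_{q+1}$, which is the substantive content of \cite{VM2015}; your proposal names this obstacle honestly but does not overcome it. As it stands the argument proves that $\Lambda_q$ consists of central measures, that those with $r<1/2$ are not ergodic, and (with the repair above) that the trajectories of $\lambda_{\omega,r}$, $r\ge 1/2$, converge to $\omega$ — but neither inclusion of the stated equality $\Erg(\Gamma(T_{q+1},v_0))=\Lambda_q$ is fully established.
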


On the other hand, this set can be identified with the set of all minimal positive eigenfunctions of the Laplace operator
on the tree with eigenvalues greater than some constant ($\sqrt q$). For eigenvalues that are less than this constant, we obtain a nonergodic measure, corresponding in the formula above to values $r<\frac{1}{2}$. So, when $r$ passes from greater values through~$\frac{1}{2}$, we have a kind of phase transition, namely, we lose the ergodicity of the central measure. The central measures that correspond to these values
can be decomposed into integrals over ergodic measures on the Poisson--Furstenberg boundary with full support.

%%% Pic.   T3 %%%%%%%%%%%%%%%%%%%%%%%%%%%%%%%%%%%%%%  [

\begin{figure}[h!]
\center{\includegraphics[scale=0.9]{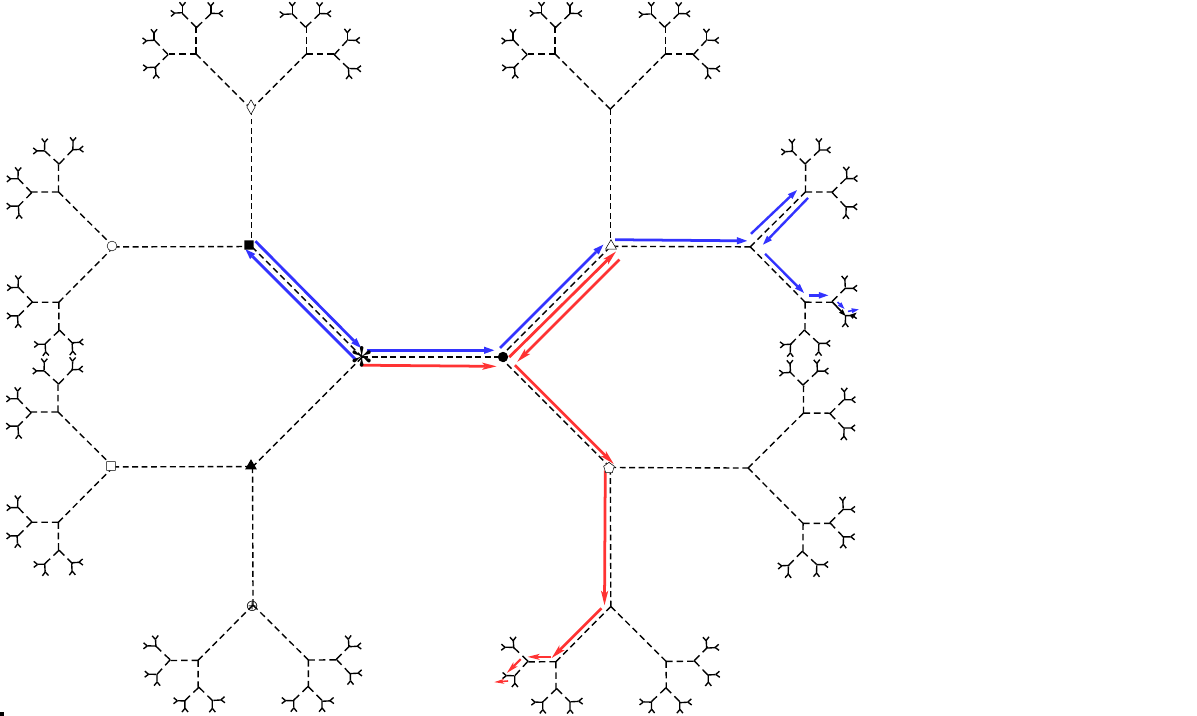}}
\caption{A random walk on a tree.}
\label{fig:T3}
\end{figure}

%%%%%%%%%%%%%%%%%%%%%%%%%%%%%%%%%%%%%%%%%%%%%%%%%%%%%%%%%%%%%%%

%%% Pic   TreePascalization %%%%%%%%%%%%%%%%%%%%%%%%%%%%%%%%%%%%%%

\begin{figure}[h!]
\center{\includegraphics[scale=0.85]{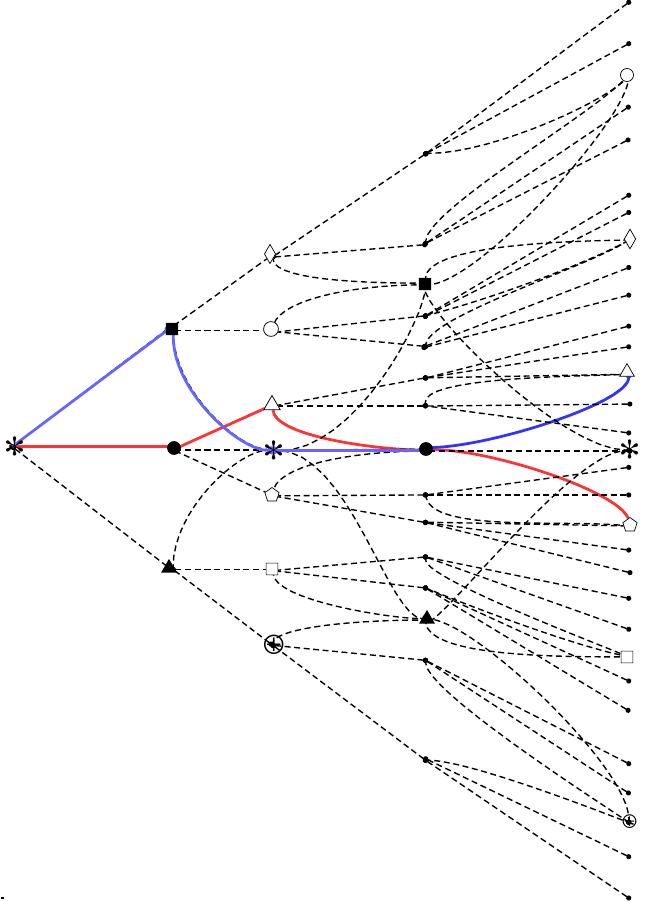}}
\caption{The boundary of  $T_3$.}
\label{fig:Tree Pascalization}
\end{figure}
%%%%%%%%%%%%%%%%%%%%%%%%%%%%%%%%%%%%%

The problem of calculating the exit boundaries is more general than, and slightly different from, the corresponding problem
for the Poisson--Furstenberg boundaries for random walks. There are few groups for which this problem is solved.

\subsection{Random subgroups, characters, and their description
for the infinite symmetric group}

Now we will discuss another application of  invariant measures.

Let $G$ be a  group and $L(G)$ be the lattice of all its subgroups. The group~$G$ acts on $L(G)$ by conjugation:
 $$L(G) \ni H\mapsto gHg^{-1}\in L(G).$$
We will call this action the adjoint (AD) action, and measures invariant under the AD-action will be called AD-invariant measures, or merely AD-measures.

What are continuous AD-measures (finite or $\sigma$-finite) on $L(G)$?
Sometimes, such measures are called ``random subgroups'' (IRS), see \cite{Ab}.

The following important observation gives a link between characters on the group $G$ and invariant measures on the lattice $L(G)$.
\begin{statement}
The function
$$\chi(g)=\mu\{x:gx=x\}$$
 on the group, where $\mu$ is an invariant measure on a $G$-space $(X,\mu)$, is a character of the group $G$.
This means that $\chi(e)=1$, $\chi(hgh^{-1})=\chi(g)$, and $\chi$ is a positive definite function.
\end{statement}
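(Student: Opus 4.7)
The plan is to verify each of the three defining properties of a character separately; all three reduce to elementary manipulations of the set $\Fix(g):=\{x\in X:gx=x\}$ combined with the $G$-invariance of $\mu$. I will first record measurability: $\Fix(g)$ is the preimage of the diagonal of $X\times X$ under the Borel map $x\mapsto(x,gx)$, so it is a Borel set and $\chi(g)=\mu(\Fix(g))$ is well defined for every $g\in G$.

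Normalization is immediate, since $\Fix(e)=X$ and $\mu(X)=1$. Conjugation invariance follows from the identity $\Fix(hgh^{-1})=h\cdot\Fix(g)$ (if $gx=x$ then $hgh^{-1}(hx)=hx$, and conversely), combined with $\mu(hA)=\mu(A)$ for any Borel $A$; hence $\chi(hgh^{-1})=\mu(h\cdot\Fix(g))=\mu(\Fix(g))=\chi(g)$.

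The interesting step is positive definiteness. For $g_1,\dots,g_n\in G$ and scalars $a_1,\dots,a_n\in\mathbb{C}$ I plan to compute
\[
\sum_{i,j} a_i\bar a_j\,\chi(g_j^{-1}g_i)
=\sum_{i,j}a_i\bar a_j\,\mu\{x:g_j^{-1}g_i x=x\}
=\int_X\Bigl(\sum_{i,j}a_i\bar a_j\,\mathbf{1}\{g_ix=g_jx\}\Bigr)\,d\mu(x),
\]
using that $g_j^{-1}g_i x=x\iff g_ix=g_jx$ and Fubini. For each fixed $x\in X$ the relation $i\sim_x j\iff g_ix=g_jx$ is an equivalence relation on $\{1,\dots,n\}$; if $C_1,\dots,C_r$ are its classes, then the integrand equals
\[
\sum_{\ell=1}^{r}\Bigl|\sum_{i\in C_\ell}a_i\Bigr|^{2}\ \ge\ 0,
\]
so the integral is nonnegative. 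This yields positive definiteness.

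I do not expect a genuine obstacle: the only subtlety is making sure the argument works in the measure-theoretic category (measurability of $\Fix(g)$ and of the indicator $\mathbf{1}\{g_ix=g_jx\}$, and the use of Fubini on a finite double sum), which is automatic once $X$ is a standard Borel $G$-space with Borel action. If one wishes to extend the statement to quasi-invariant measures, the conjugation-invariance step is where the invariance hypothesis is indispensable; under mere quasi-invariance one obtains a character only after normalization by the Radon--Nikodym cocycle, but that is outside the scope of the proposition as stated.
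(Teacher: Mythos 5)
Your proof is correct. Note that the paper states this proposition as a bare observation and gives no proof of it (the author defers proofs to a forthcoming article), so there is no official argument to match step by step; but the surrounding material (the groupoid construction of Section~6.5 and the theorem on TNF actions) makes the intended route clear: one realizes $\chi$ as a diagonal matrix coefficient $\chi(g)=\langle\pi(g)\xi,\xi\rangle$ of the natural unitary representation of $G$ on $L^2$ of the orbit equivalence relation $\tau=\{(x,y): y\in Gx\}$ with the $\sigma$-finite measure $\Psi_\mu$, taking $\xi$ to be the indicator of the diagonal, after which positive definiteness is automatic. Your argument is the elementary, Hilbert-space-free version of the same computation: the identity $g_j^{-1}g_ix=x\Leftrightarrow g_ix=g_jx$ converts the quadratic form into the integral over $x$ of
$$\sum_{\ell=1}^{r}\Bigl|\sum_{i\in C_\ell}a_i\Bigr|^{2},$$
where the $C_\ell$ are the classes of the $x$-dependent equivalence relation $i\sim_x j\Leftrightarrow g_ix=g_jx$, and this is manifestly nonnegative; unwinding the representation-theoretic proof pointwise yields exactly this expansion. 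What the direct route buys is self-containedness and the explicit identification of where each hypothesis enters (your remark that genuine invariance, not mere quasi-invariance, is needed only in the conjugation step is accurate); what the groupoid route buys is that it exhibits the representation whose character $\chi$ is, which is what the paper ultimately needs for the factor-representation results. The remaining requirement $\chi(g^{-1})=\overline{\chi(g)}$ is immediate here since $\{x:g^{-1}x=x\}=\{x:gx=x\}$ and $\chi$ is real-valued, and in any case follows from the positive definiteness you established. No gaps.
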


Of course, not all characters of a general group have this form, but for some groups, including the infinite symmetric group $S_{\Bbb N}$, this is a universal formula.

If an AD-measure $\mu$ on the lattice $L(G)$ is concentrated on the
set of {\it self-normalizers}, or subgroups $H$ such that $gH=Hg\Rightarrow g \in H$, then the formula above looks as
$$\chi_{\mu}(g)=\mu\{H:g\in H\};$$
this is the measure of the set of subgroups that contain the element $g$, and we called such measures ADS-measures.

From this point of view, two measures $\mu_1$ and $\mu_2$ are congruent if $\chi_{\mu_1}(g)=\chi_{\mu_2}(g)$. This is a useful
identification of invariant measures on $L(G)$.

We formulate the following problem.

\begin{problem}
 For a given group $G$, describe all continuous ergodic invariant measures on $L(G)$ and those of them that are concentrated on the set of self-normalizers (AD-measures and ADS-measures).
 \end{problem}

 In the next section, we will formulate the same problem from the point of view of  totally nonfree actions of groups.

In the papers  \cite{Fr2010, Fr2012} by the author, the problem was solved for the infinite symmetric group. The relation to the theory of exit boundaries is very simple: the solution of the problem gives also a description of the absolute boundary of the Young graph, or the list of Thoma characters.

Recall Thoma's theorem \cite{Tho}.
In our terms, the absolute boundary of the Young graph is the so-called Thoma simplex $\{\alpha_n\}_{n\in \Bbb Z}$, where
\begin{multline}\{0 \leq \dots \alpha_{-n}\leq \alpha_{-(n-1)}\leq \dots \leq \alpha_{-1}; \quad\alpha_0; \\ \alpha_1  \geq \dots \geq \alpha_{n-1} \geq \alpha_n \dots \geq 0;\quad \sum_{i\in \Bbb Z}\alpha_i=1\}. \end{multline}

%%%%%%%%%%%%%%%%%%%%%%%%%%%%%%% PIC 2
\begin{figure}[h]
\hspace*{2cm}{\includegraphics[scale=1.2]{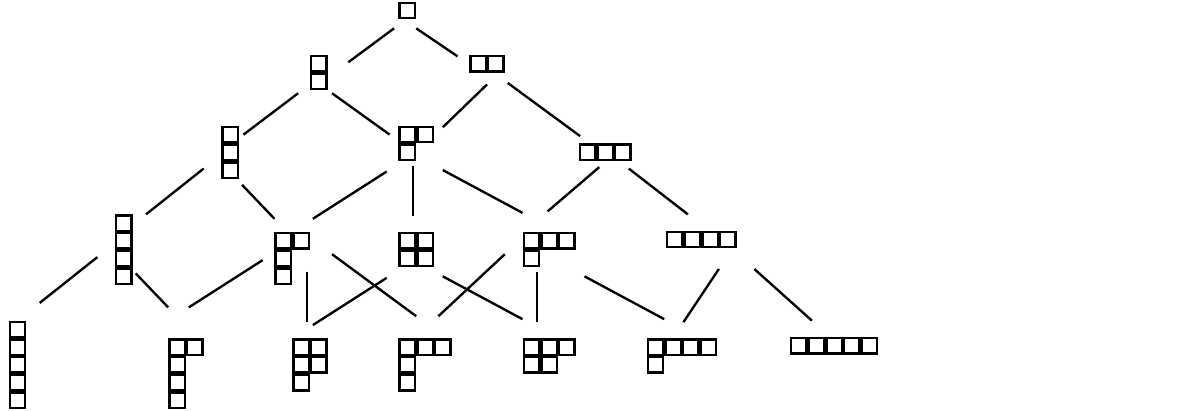}}
\caption{The Young graph, which is the branching diagram of irreducible representations of the symmetric groups.}
\label{fig:Young}
\end{figure}
%%%%%%%%%%%%%%%%%%%%%%%%%%%%%%%%%%

The lattice $L(S_{\Bbb N})$ is very large and contains very different
types of subgroups. Nevertheless,
the support of any invariant measure on $L(S_{\Bbb N})$ consists of subgroups of a very
special kind: so-called signed Young groups.
The topology and the Borel structure on $L(S_{\Bbb N})$ are
defined as usual; this is a compact (Cantor-like) space.

\begin{definition}[Signed partitions]
 A signed partition $\eta$ of the set $\Bbb N$
 is a finite or countable partition $\Bbb N=\cup_{B\in{\cal B}}B$ of
 $\Bbb N$ together with a decomposition ${\cal
 B}={\cal B}^+\cup{\cal B}^-\cup{\cal B}^0$ of
 the set of its blocks, where ${\cal B}^0$ is the set of all
 single-point blocks, elements of ${\cal B}^+$ are called positive
 blocks, and elements of ${\cal B}^-$ are called negative blocks (thus
 each positive or negative block contains at least two points), and we
 denote by $B_0$ the union of all single-point blocks:
 $B_0=\cup_{\{x\}\in{\cal B}^0}\{x\}$.

 Denote the set of all signed partitions of $\Bbb N$ by ${\rm SPart}(\Bbb N)$.
\end{definition}

Recall that in the theory of finite symmetric groups, the Young
subgroup~$Y_\eta$ corresponding to an ordinary partition
$\eta=\{B_1,B_2,\dots, B_k\}$
is $\prod_{i=1}^k S_{B_i}$,
where $S_B$ is the symmetric group acting on $B$. We will define a more general
notion of a {\it signed Young subgroup}, which makes sense both for
finite and infinite symmetric groups.
We will use the following notation: $S^+(B)$ is the symmetric group
of all finite permutations of elements of a set $B\subset \Bbb
N$, and $S^-(B)$ is the alternating
group on $B$.\footnote{Traditionally, the alternating group is denoted by $A_n$;
V.~I.~Arnold was very enthusiastic about the idea to denote it by
$S^-_n$ in order not to confuse it with the Lie algebra $A_n$; I agree
with this idea.}

\begin{definition}[Signed Young subgroups]
The signed Young subgroup $Y_{\eta}$ corresponding to a signed
partition $\eta$ of $\Bbb N$ is
$$Y_{\eta}=\prod_{B\in{\cal B}^+}S^+(B)\times \prod_{B\in{\cal B}^-}S^-(B).$$
\end{definition}

Note that  on the set $B_0\subset
\Bbb N$, the subgroup $Y_{\eta}$ acts identically, so that the partition into the orbits of $Y_{\eta}$
coincides with $\eta$.

It is not difficult to describe the conjugacy class of Young subgroups
with respect to the group of inner automorphisms:
$Y_{\eta}\sim Y_{\eta'}$ if and only if $\eta$ and $\eta'$ are
equivalent up to the action of $S_{\Bbb N}$. But it is more important to consider the conjugacy
with respect to the group of outer automorphisms. This is the group
$S^{\Bbb N}$
of all permutations of $\Bbb N$.
Denote by $r_0^\pm$ the number of infinite positive
(respectively, negative) blocks, and by $r^{\pm}_s$ the number of
finite positive
(respectively, negative) blocks of length $s>1$. Obviously, the list
of numbers $\{r^{\pm}_0,r^{\pm}_1, \dots \}$ is a complete set of
invariants of the group of outer automorphisms.

\subsection{Limit shapes and entropy}
A very important notion concerning central measures (especially standard measures) is the notion of entropy. A special case of this notion, the entropy of the Plancherel measure on the space of Young tableaux, was suggested by the author in the 1980s and appeared in the paper \cite{VeK85} joint with S.~V.~Kerov (the ``entropy conjecture''). In that paper, a two-sided bound on the entropy was obtained, and the question was about the existence of the a.e.\ limit of the entropy. Recently, A.~Bufetov \cite{Buf} proved the existence of the limit in the sense of $L^2$.

The general problem, in the spirit of C.~Shannon's theory, is as follows.

Consider a branching graph $\Gamma$ and a central measure $\mu$ on the path space~$T(\Gamma)$.
Let $\mu_n$ be the projection of $\mu$ to the level $\Gamma_n$, and let $h_n=H(\mu_n)$ be the entropy of the
measure $\mu_n$. When does the limit
$$\lim \frac{\log(\mu(\gamma_n))}{h_n}={\rm const}$$
exist for  $\mu$-almost all paths $\gamma$?

 Here $\gamma_n$ is the vertex of the path $\gamma$ at the $n$th level.
   For the case of the Plancherel measure on the Young graph, we have just $\sqrt n$:
   $$\sum_{\lambda\vDash n} \frac{\dim (\lambda)^2}{n!}\log \frac{\dim (\lambda)^2}{n!}\equiv E_{\mu_n}[{\mu_n}(\lambda)] \approx \sqrt n. $$
So, we obtain the following conjecture:
$$\lim_n \frac{1}{\sqrt n}\log \mu(t_n)={\rm const}, \quad t_n \in Y_n,\,  t \in T(Y),$$
for almost all infinite Young tableaux $t$ with respect to the Plancherel measure~$\mu$.

 It may happen that the answer is positive for standard central measures on all graphs. This question is closely related to the theory  of entropy of random walks on  groups and graphs.

\subsubsection{The list of AD-measures for $ S_{\Bbb N}$}

Consider a sequence of positive numbers $\alpha=\{\alpha_i\}_{i\in
\Bbb Z}$ such that
$$\alpha_i\geq \alpha_{i+1}\geq 0\mbox{ for }i>0;
\quad \alpha_{i+1} \geq \alpha_i\geq 0\mbox{ for }i<0;\quad
\alpha_0\geq 0;\quad \sum_{i\in \Bbb Z} \alpha_i= 1.$$

Consider a sequence of $\Bbb Z$-valued independent random variables $\xi_n$, $n\in
\Bbb N$, with the distribution
$$\operatorname{Prob}\{\xi_n=v \}=\alpha_v \quad \mbox{for all} \quad
n\in {\Bbb N},\, v \in \Bbb Z.$$
Thus we have defined a Bernoulli measure  $\mu_{\alpha}$ on the space of
integer sequences $$\Bbb Z^{\Bbb N}=\{\xi=\{\xi_n\}_{n\in \Bbb N}:  \xi_n \in \Bbb Z\}.$$

\begin{definition}[A random signed Young subgroup and the measures
$\nu_{\alpha}$]
Fix a sequence $\alpha=\{\alpha_i,\, i\in \Bbb Z\}$ and the corresponding
Bernoulli measure  $\mu_{\alpha}$; for each
realization of the random sequence $\{\xi_n\}$, $n\in \Bbb N$,
  with the distribution~$\mu_\alpha$, define a random signed partition
  $\eta(\xi)$ of $\Bbb N$ as follows:
$$\eta(\xi)=\{B_i\subset {\Bbb N},\, i\in \Bbb Z\},\qquad
B_i:=\{n \in {\Bbb N}:\xi_n=i\};$$
here ${\cal B}^+=\{B_i,\,i>0\}$, ${\cal B}^-=\{B_i,\,i<0\}$,
and $B_0$ is understood as the union of one-point blocks.
The correspondence $\xi\mapsto \eta(\xi)$ defines a probability
measure on the set $\operatorname{SPart}(\Bbb N)$ of signed partitions
(or a random signed partition), which is  the image of the Bernoulli measure
$\mu_{\alpha}$. The correspondence $\xi \mapsto Y_{\eta(\xi)}$ defines
a measure, which we denote by $\nu_{\alpha}$, on the set of signed
Young subgroups, i.e., a measure on the lattice
$L(S_{\Bbb N})$ of subgroups of  $S_{\Bbb N}$.
\end{definition}

Note that all nonempty blocks of the random signed partition $\eta({\xi})$
that consist of more than one point are infinite
with $\nu_{\alpha}$-probability one.

\def\St{\operatorname{St}}
\def\Stab{\operatorname{Stab}}

\begin{definition}
Let $G$ be a countable group. A measure-preserving action of~$G$ on a measure space $(X,\mu)$ is called totally nonfree (TNF) if the map
$$ \St: X \rightarrow L(G),\qquad \St(x)=\Stab_x \in L(G),$$
where $\Stab_x=\{g\in G :gx=x\}
$, is an isomorphism (i.e., an injection $\bmod 0$).
\end{definition}

Now we describe the list of all AD and TNF measures for the group
$S_{\Bbb N}$.

\begin{theorem}
Every measure $\nu_{\alpha}\in L(S_{\Bbb N})$ is Borel ergodic AD-invariant.
Every ergodic probability Borel AD-invariant measure on the lattice $L(S_{\Bbb N})$ coincides up to congruence
 with the measure $\nu_{\alpha}$ for some $\alpha$.
\end{theorem}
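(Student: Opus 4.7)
The plan is to prove the two directions of the theorem separately.

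\emph{Direct direction: the $\nu_\alpha$ are Borel, ergodic, and AD-invariant.} The map $F:\mathbb{Z}^{\mathbb{N}}\to L(S_{\mathbb{N}})$ defined by $\xi\mapsto Y_{\eta(\xi)}$ is Borel-measurable (the event ``$g\in Y_{\eta(\xi)}$'' depends only on the finitely many coordinates of $\xi$ moved by $g$), and it is equivariant: if $\sigma\in S_{\mathbb{N}}$ acts on $\mathbb{Z}^{\mathbb{N}}$ by permuting coordinates, then the blocks of $\eta(\sigma\cdot\xi)$ are the $\sigma$-images of the blocks of $\eta(\xi)$ with the same signs, so $F(\sigma\cdot\xi)=\sigma F(\xi)\sigma^{-1}$. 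Since $\mu_\alpha$ is a product measure it is $S_{\mathbb{N}}$-invariant, and by the Hewitt--Savage zero--one law the action of $S_{\mathbb{N}}$ on $(\mathbb{Z}^{\mathbb{N}},\mu_\alpha)$ is ergodic. Pushing forward under $F$ then gives that $\nu_\alpha=F_{*}\mu_\alpha$ is Borel, AD-invariant, and ergodic.

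\emph{Converse: every ergodic AD-invariant Borel $\mu$ is congruent to some $\nu_\alpha$.} The key step is a rigidity claim: $\mu$-a.e.\ subgroup $H$ is a signed Young subgroup. I would first analyze the random orbit partition $\theta(H)$ of $H$ acting on $\mathbb{N}$; AD-invariance of $\mu$ makes $\theta(H)$ an $S_{\mathbb{N}}$-exchangeable random partition, so by Kingman's paintbox theorem combined with ergodicity of $\mu$ it reduces to a single paintbox with a deterministic frequency sequence. Next, for each infinite orbit $B$ of $H$, the restriction $H|_B\subset S(B)$ is a transitive subgroup; using AD-invariance of the induced random subgroup in $L(S(B))$ under $S(B)$-conjugation together with ergodicity, I would argue that $H|_B\in\{S^{+}(B),S^{-}(B)\}$ almost surely. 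Assigning signs $+$, $-$, or $0$ to each block then produces a signed partition $\eta(H)\in\operatorname{SPart}(\mathbb{N})$ whose law is an $S_{\mathbb{N}}$-exchangeable, ergodic measure on $\operatorname{SPart}(\mathbb{N})$. By a marked version of de~Finetti's theorem, this law is the image under $\eta$ of a Bernoulli measure $\mu_\alpha$ for some sequence $\alpha=\{\alpha_i\}_{i\in\mathbb{Z}}$ satisfying the monotonicity and normalization conditions in the statement. Since both $\mu$ and $\nu_\alpha$ are then supported on self-normalizing signed Young subgroups, the characters $g\mapsto\mu\{H:g\in H\}$ and $g\mapsto\nu_\alpha\{H:g\in H\}$ coincide, which is exactly congruence.

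\emph{Main obstacle.} The delicate point is the rigidity assertion that $H|_B$ is, $\mu$-a.s., the full symmetric or alternating group on every infinite orbit $B$. The classical Schreier--Ulam theorem classifies only the \emph{normal} subgroups of an infinite symmetric group, whereas \emph{a priori} $H|_B$ is just transitive, and transitive subgroups of $S(B)$ are plentiful. My plan is to push the AD-invariance down to $S(B)$: the induced conditional distribution of $H|_B$ given $B$ is itself an ergodic invariant random subgroup (IRS) of $S(B)\cong S_\infty$ supported on transitive subgroups, and the classification of such IRSs (via totally nonfree actions, or dually via Thoma's parametrization of extremal characters) forces this random subgroup to be almost surely either the finitary symmetric or the finitary alternating group. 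One must also verify that the ``full'' versus ``finitary'' symmetric ambiguity does not alter the character $\chi_\mu$, so that the congruence class is uniquely determined. Once this rigidity is secured, the Kingman/de~Finetti extraction of $\alpha$ and the final identification $\mu\sim\nu_\alpha$ are routine.
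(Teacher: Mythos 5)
The paper itself contains no proof of this theorem: it is quoted from \cite{Fr2010,Fr2012}, so your proposal can only be measured against the strategy of those papers. Your direct direction is correct and complete in outline (equivariance of $\xi\mapsto Y_{\eta(\xi)}$, exchangeability and Hewitt--Savage ergodicity of $\mu_\alpha$, pushforward), and your first step in the converse --- treating the orbit partition of the random subgroup as an ergodic exchangeable partition and invoking Kingman's paintbox theorem --- is exactly the right opening move. The problems are in the two steps that carry all the weight.

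First, your rigidity claim that $H|_B\in\{S^+(B),S^-(B)\}$ on each infinite orbit is justified by appeal to ``the classification of ergodic IRSs of $S(B)\cong S_\infty$ supported on transitive subgroups,'' which is precisely the theorem you are proving (restricted to a single infinite block); this is circular. The non-circular route is group-theoretic, not IRS-theoretic: one first rules out imprimitive transitive restrictions by showing that an $H$-invariant system of finite blocks inside an infinite orbit would produce an exchangeable partition violating the paintbox dichotomy (positive-frequency or singleton), and then applies the Jordan--Wielandt theorem, which says that a primitive group of \emph{finitary} permutations of an infinite set contains the finitary alternating group, whence $H|_B$ is $S^-(B)$ or $S^+(B)$. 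The detour through Thoma's theorem is also problematic: in \cite{Fr2012} the logical order is reversed (the AD-classification is used to reprove Thoma), and in any case recovering the measure $\mu$ from the character $\chi_\mu$ is itself nontrivial. Second, even granting the rigidity on each block, you silently pass from ``each restriction $H|_B$ is $S^\pm(B)$'' to ``$H$ is the signed Young subgroup $\prod_B H|_B$.'' A subgroup is only \emph{contained} in the product of its restrictions; there exist ergodic AD-measures carried by proper subgroups of signed Young subgroups (for instance with a sign condition correlating several blocks and the fixed-point set). This is exactly why the theorem asserts coincidence only \emph{up to congruence} rather than equality of measures, and a correct proof must identify the characters of these extra subgroups with those of the $\nu_\alpha$; your sketch, which would prove literal equality $\mu=\nu_\alpha$, proves something false in general. (A minor slip: signed Young subgroups with a negative block are not self-normalizing, since $S^+(B)$ normalizes $S^-(B)$; this does not affect the definition of congruence, but your final sentence relies on it.)
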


\subsection{The link to dynamics and factor representations of groups}

The previous result gives an important example of a TNF action.
\begin{statement}
 The adjoint action of the group $S_{\Bbb N}$ on the lattice $L(S_{\Bbb N})$
with any AD-measure is a TNF action.
\end{statement}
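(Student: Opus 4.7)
The stabilizer of $H\in L(S_\Bbb N)$ under the adjoint action $g\cdot H=gHg^{-1}$ is the normalizer $N(H)=\{g\in S_\Bbb N:gHg^{-1}=H\}$, so the claim is equivalent to the statement that the stabilizer map $\St:H\mapsto N(H)$ is an injection modulo $\nu$ for every ergodic AD-invariant measure $\nu$ on $L(S_\Bbb N)$. The first move is to apply the classification theorem stated just above: any such $\nu$ is congruent to some $\nu_\alpha$, which is the pushforward of the Bernoulli measure $\mu_\alpha$ on $\Bbb Z^\Bbb N$ under the map $\xi\mapsto Y_{\eta(\xi)}$. Thus $\nu_\alpha$-a.e.\ $H$ is a signed Young subgroup $Y_\eta$, and by Borel--Cantelli applied to $\xi$ every non-singleton block of $\eta$ is almost surely infinite.

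The main structural step is to compute $N(Y_\eta)$. Because every element of $S_\Bbb N$ has finite support, no element can transport an infinite block of $\eta$ onto a different infinite block, so any $g\in N(Y_\eta)$ must preserve each non-singleton block setwise; conversely, since the alternating group is normal inside the symmetric group, any block-preserving $g$ does normalize $Y_\eta$. In particular the orbit partition of $N(Y_\eta)$ on $\Bbb N$ recovers the underlying unsigned partition of $\eta$, and the sign of each block $B$ is read off from $Y_\eta$ itself by checking whether $S^+(B)$ or only $S^-(B)$ lies inside $Y_\eta$. This reconstructs $\eta$, and so $Y_\eta$, from the pair $(Y_\eta,N(Y_\eta))$.

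Finally, one concludes the TNF property by promoting ``$(Y_\eta,N(Y_\eta))$ determines $Y_\eta$'' to ``$N(Y_\eta)$ alone determines $Y_\eta$ on a $\nu_\alpha$-conull set.'' Here one uses the disintegration of $\nu_\alpha$ over the map $H\mapsto(\text{orbit partition of }N(H))$: the classification theorem identifies $\nu_\alpha$ as the image of a Bernoulli measure under a map in which the sign data of each block is rigidly determined by $\xi$, so the ergodicity of $\nu_\alpha$ together with the zero--one law for tail events of $\xi$ forces the exceptional locus --- where $N(Y_\eta)=N(Y_{\eta'})$ for some $Y_\eta\neq Y_{\eta'}$ both in $\mathrm{supp}(\nu_\alpha)$ --- to have $\nu_\alpha$-measure zero.

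I expect the hard part to be precisely this last step. At the purely group-theoretic level $N(Y_\eta)$ sees only the unsigned partition of $\eta$, so sign-flipped signed partitions give distinct Young subgroups with coincident normalizers; the injectivity of $\St$ modulo $\nu_\alpha$ is therefore genuinely measure-theoretic and relies on the Bernoulli rigidity of $\nu_\alpha$, not on any group-theoretic identity $N(Y_\eta)=Y_\eta$. The detailed execution of this rigidity argument is the content of \cite{Fr2010,Fr2012} and is to be spelled out in the article announced in the Introduction.
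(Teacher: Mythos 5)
The paper itself contains no proof of this proposition: it is stated as a corollary of the preceding classification of AD-measures, with the actual arguments deferred to \cite{Fr2010,Fr2012}, so your proposal has to be judged on its own. Its first two steps are correct: the stabilizer of $H$ under the adjoint action is the normalizer $N(H)$; by the classification theorem one may take the measure to be $\nu_\alpha$, concentrated on signed Young subgroups $Y_\eta$ whose non-singleton blocks are a.s.\ infinite; and since every element of $S_{\Bbb N}$ has finite support, $N(Y_\eta)$ consists exactly of the finitary permutations preserving each non-singleton block of $\eta$ setwise (together with arbitrary finitary permutations of $B_0$), so it depends only on the \emph{unsigned} partition. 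Your step reconstructing $\eta$ from the pair $(Y_\eta,N(Y_\eta))$ is vacuous --- of course $Y_\eta$ determines itself --- and the entire content of the proposition is that $N(Y_\eta)$ \emph{alone} determines $Y_\eta$ mod $0$.

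That is exactly where your argument breaks, and the mechanism you invoke cannot repair it. The set of $H$ for which some $H'\neq H$ in the support satisfies $N(H')=N(H)$ is invariant under the adjoint action, so ergodicity forces its measure to be $0$ \emph{or} $1$; it does not force it to be $0$. Take $\alpha_1=\alpha_{-1}=1/2$ and all other $\alpha_i=0$. The value swap $\xi\mapsto-\xi$ preserves $\mu_\alpha$ and descends to a $\nu_\alpha$-preserving, a.e.\ fixed-point-free involution of $L(S_{\Bbb N})$ sending $S^+(A)\times S^-(B)$ to $S^+(B)\times S^-(A)$; these are distinct subgroups with the common normalizer $S^+(A)\times S^+(B)$. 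Hence the stabilizer map is a.e.\ two-to-one, and a measure-preserving fixed-point-free involution admits no conull fundamental domain, so no zero--one law or ``Bernoulli rigidity'' can produce a conull set on which the map is injective. Any correct proof must therefore recover the signs of the blocks from data other than the normalizer --- for instance from the block densities, which works precisely when the multisets $\{\alpha_i\}_{i>0}$ and $\{\alpha_{-j}\}_{j>0}$ do not overlap --- or must interpret the proposition modulo the congruence of measures introduced just before it; deferring this to \cite{Fr2010,Fr2012} leaves the proposal without its essential step.
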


Why such actions are important for representation theory?

Recall that in the framework of the well-known von Neumann, or groupoid, construction of the $W^*$-factor of type II$_1$ generated by this action of the group~$G$ on $(X, \mu)$,  we have an action of the group $G\times G$ on $X\times X$ and the corresponding equivalence relation
$$\tau\equiv\{(x,y):\exists g\in G:y=gx\}$$ with a $\sigma$-finite $(G\times G)$-invariant measure $\Psi_{\mu}$ on $X\times X$.

The following theorem was proved in \cite{Fr2010}.
\begin{theorem}\label{th:17}
Assume that we have an ergodic, measure-preserving TNF action of a countable group $G$ on a standard measure space $(X,\mu)$.
Then the (Koopman) representation of the group $G\times G$ in $L^2_{\Psi}(\tau)$ is irreducible. The restrictions of this representation to the
left and right components $G$ are factor representation of type ${\rm II}_1$.
\end{theorem}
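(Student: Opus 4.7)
The plan is to work with the Koopman representation $U_{(g,h)}f(x,y) = f(g^{-1}x,h^{-1}y)$ of $G\times G$ on $\mathcal{H} = L^2(\tau,\Psi)$, and to study the two commuting von Neumann algebras $M_L = \{U_{(g,e)} : g\in G\}''$ and $M_R = \{U_{(e,h)} : h\in G\}''$ generated by the restrictions to the left and right factors. All three claims of the theorem collapse into the single assertion that $M_L$ and $M_R$ are II$_1$ factors that are mutual commutants: once $M_L' = M_R$, the commutant of the joint representation is $M_L' \cap M_R' = M_R \cap M_R' = Z(M_R) = \mathbb{C}\cdot 1$, giving irreducibility, and the factor property of $M_L, M_R$ gives the remaining claim.

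I would first construct the candidate tracial state on $M_L$. Take $\delta := 1_\Delta \in \mathcal{H}$, where $\Delta = \{(x,x) : x \in X\}$; since $\Psi(\Delta) = \mu(X) = 1$, one has $\|\delta\| = 1$. A direct computation gives $\langle U_{(g,e)}\delta,\delta\rangle = \mu\{x : gx = x\}$ and, more generally, $\langle U_{(g,e)}U_{(h,e)}\delta,\delta\rangle = \mu\{x : ghx = x\}$. The substitution $x \mapsto h^{-1}x$, together with $G$-invariance of $\mu$, yields $\mu\{x : ghx = x\} = \mu\{x : hgx = x\}$, so $\phi(T) := \langle T\delta, \delta\rangle$ is tracial on the $*$-algebra generated by the $U_{(g,e)}$ and extends by normality to a tracial state on $M_L$.

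The crucial step, and the one using TNF essentially, is to show that $M_L$ contains multiplication operators by $L^\infty(X)$-functions of the first coordinate; once this is in hand, $M_L$ is identified with the standard groupoid von Neumann algebra $L^\infty(X)\rtimes G$, which is a II$_1$ factor because its center is $L^\infty(X)^G = \mathbb{C}\cdot 1$ by ergodicity. To establish this inclusion, I would disintegrate $\mathcal{H} = \int^{\oplus}_X \ell^2(G/\Stab_y)\,d\mu(y)$ along the second coordinate; the operators $U_{(g,e)}$ act fiberwise as the quasi-regular representations of $G$ on $\ell^2(G/\Stab_y)$. The TNF hypothesis, namely injectivity of $y \mapsto \Stab_y$, forces these fiber representations to be pairwise non-isomorphic on a set of full $\mu\times\mu$ measure, so by the general theory of decomposable operators over direct integrals any bounded operator commuting with this measurable field must be diagonal. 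This is precisely what is needed to place all diagonal multiplications $a(y)\cdot I$, $a \in L^\infty(X)$, into the algebra generated (along with the $U_{(g,e)}$) by spectral and pointwise-convergence arguments.

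The hardest part is this TNF-based reconstruction of $L^\infty(X)$ from the $G$-action alone; it fails for free actions precisely because then every fiber carries the same left-regular representation and the disintegration argument collapses, which explains why TNF is the sharp hypothesis. Once $L^\infty(X)\subseteq M_L$ is established, the remaining steps are standard: $\phi$ becomes the canonical faithful normal trace on $L^\infty(X)\rtimes G$, $\delta$ is cyclic and separating for $M_L$, and Tomita--Takesaki (or equivalently the Murray--von Neumann commutation theorem for ergodic measure-preserving crossed products, applied via the trace vector $\delta$) gives $M_L' = M_R$. This simultaneously yields the II$_1$ factor property of both the left and right restrictions and the irreducibility of the $G\times G$ Koopman representation.
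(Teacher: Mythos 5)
The paper itself contains no proof of this theorem: it is quoted from \cite{Fr2010}, and the only hint given is the remark immediately following the statement, namely that for a TNF action the multiplicators from $L^{\infty}(X,\mu)$ lie in the weak closure of the algebra generated by the group operators. Your plan correctly isolates exactly this as the crux, and the surrounding architecture --- the trace vector $\delta=1_{\Delta}$, the computation $\langle U_{(g,e)}\delta,\delta\rangle=\mu\{x: gx=x\}$ and its traciality, the reduction of irreducibility to $M_L'=M_R$ together with factoriality --- is sound. The proof you offer for the crux, however, does not work.

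The failure is concrete. In your disintegration of $L^2_{\Psi}(\tau)$ over the second coordinate, multiplication by a function $a(y)$ of the second coordinate commutes with every $U_{(g,e)}$ (these move only the first coordinate), so the diagonal algebra sits inside $M_L'$ for trivial reasons; if it were also inside $M_L$, as you claim to derive, it would be central in $M_L$, contradicting the very factoriality you are trying to prove. The hypothesis of the direct-integral disjointness theorem is genuinely violated here: for $y'=hy$ on the same orbit the stabilizers are conjugate, so the fibre representations $\ell^2(G/\operatorname{Stab}_y)$ and $\ell^2(G/\operatorname{Stab}_{y'})$ are unitarily equivalent, and these equivalences assemble into the explicitly non-decomposable operators $U_{(e,h)}\in M_L'$. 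The orbit relation is $(\mu\times\mu)$-null but $\Psi$-conull, which is precisely why ``non-isomorphic for $\mu\times\mu$-a.e.\ pair'' is not the relevant condition (the theorem needs disjointness of $\int_E\pi_y$ and $\int_F\pi_y$ for all disjoint positive-measure sets $E,F$, and that fails here). What must actually be proved is that multiplications by functions of the \emph{first} coordinate lie in $M_L$; there TNF enters through its equivalent form that the fixed-point sets $\operatorname{Fix}(g)=\{x: gx=x\}$ generate the full $\sigma$-algebra of $(X,\mu)$, and one has to manufacture the projections $M_{\chi_{\operatorname{Fix}(g)}}$ (acting in the first variable) from the unitaries $U_{(g,e)}$ --- note that the spectral projection of $U_{(g,e)}$ at eigenvalue $1$ is strictly larger than this projection whenever $g$ has finite cycles on orbits, so even this requires an argument. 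That step is missing from your proposal, and no rearrangement of the direct-integral argument supplies it.
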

Note that this factor as a $W^*$-algebra is  the weak closure of the set of all operators of the semidirect product of the group of unitary operators corresponding to the elements of the group $G$ and the commutative algebra $L^{\infty}(X,\mu)$ of  measurable bounded functions on
 $(X,\mu)$. If the action is TNF, then this factor is generated by the operators of the group $G$ only; in other words,  multiplicators from the $W^*$-algebra $L^{\infty}(X,\mu)$ belong to the weak closure of the algebra generated by the operators from the group $G$.

This is a new source of factor representations of groups. For the group~$S_{\Bbb N}$, Theorem~\ref{th:17} includes the
following result by Vershik and Kerov (\cite{VeK81}).

\begin{theorem}
Every factor representation of type ${\rm II}_1$ of the group $S_{\Bbb N}$ can be realized in the framework of the  groupoid
construction based on the action of~$S_{\Bbb N}$ on $([0,1]^{\Bbb N}, \nu_{\alpha})$, where $\nu_{\alpha}$ is a Bernoulli measure.
\end{theorem}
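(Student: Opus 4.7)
The plan is to combine Thoma's classification of characters of $S_{\Bbb N}$ with the TNF construction of Theorem~\ref{th:17}, using the fact that type ${\rm II}_1$ factor representations of $S_{\Bbb N}$ are classified up to quasi-equivalence by their normalized finite traces. Let $\pi$ be a type ${\rm II}_1$ factor representation; then $\chi_\pi(g):=\operatorname{tr}\pi(g)$ is an indecomposable normalized character, and by Thoma's theorem it equals $\chi_\alpha$ for a unique point $\alpha=\{\alpha_i\}_{i\in\Bbb Z}$ of the Thoma simplex. It therefore suffices to exhibit a Bernoulli measure $\nu_\alpha$ on $[0,1]^{\Bbb N}$ such that the coordinate-permutation action of $S_{\Bbb N}$ is TNF and the corresponding groupoid factor representation has character $\chi_\alpha$; quasi-equivalence then identifies this representation with the original $\pi$.

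To build the model I would encode $\alpha$ as a single-site distribution $\rho_\alpha$ on $[0,1]$: fix pairwise disjoint countable sets of atoms $\{p_i^+\}_{i>0}$ and $\{p_i^-\}_{i<0}$ of masses $\alpha_i$ respectively, together with a continuous (e.g.\ Lebesgue) part of total mass $\alpha_0$ supported on a disjoint interval. Let $\nu_\alpha:=\rho_\alpha^{\otimes\Bbb N}$. Every $x\in[0,1]^{\Bbb N}$ determines a signed partition $\eta(x)$ of $\Bbb N$ whose blocks are the coincidence classes of coordinates and whose signs are dictated by which color class the common value belongs to (positive atoms, negative atoms, or the continuous part, with the continuous part corresponding to singleton blocks almost surely). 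The stabilizer of $x$ under the coordinate action is, on a set of full $\nu_\alpha$-measure, exactly the signed Young subgroup $Y_{\eta(x)}$ from Section~6.3. The action is ergodic because it is Bernoulli.

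Next I would apply Theorem~\ref{th:17} to obtain a type ${\rm II}_1$ factor representation $\pi'$ whose normalized trace is
\[
\chi'(g)=\nu_\alpha\{x\in[0,1]^{\Bbb N}:gx=x\}.
\]
A direct evaluation using Fubini and the cycle decomposition of $g$ gives, for each cycle of length $k\ge2$, a contribution equal to $\sum_{i>0}\alpha_i^{k}+(-1)^{k-1}\sum_{i<0}\alpha_i^{k}$ (continuous coordinates contribute zero since they almost surely produce no repetitions), which is precisely Thoma's formula for $\chi_\alpha(g)$ on such a cycle; fixed points (transpositions absent from the support of $g$) account for the $\alpha_0$-mass. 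Hence $\chi'=\chi_\alpha=\chi_\pi$, and the classification of II$_1$ factor representations of $S_{\Bbb N}$ by their characters finishes the proof.

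The principal obstacle, and the place where Theorem~\ref{th:17} is not quite enough on its own, is the verification of the TNF property together with the exact matching of signs in the character identity. The TNF part requires showing that for $\nu_\alpha$-almost every $x$ the subgroup $Y_{\eta(x)}$ recovers not only the underlying partition but also its signed structure and color labels, so that distinct $x$ give distinct stabilizers; here it is essential that the continuous part of $\rho_\alpha$ produces only singleton blocks (so as not to interfere with the atom structure) and that the alternating group on each ``$-$''-colored block be recoverable from the subgroup itself. The sign bookkeeping in the character computation is equally delicate: placing $S^-$ on negative blocks rather than $S^+$ is precisely what turns $\alpha_i^k$ into $(-1)^{k-1}\alpha_i^k$ for $i<0$, and this has to be carried out by a careful Fubini argument distinguishing the parity of cycle lengths inside each block.
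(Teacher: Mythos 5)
Your overall skeleton (classify $\pi$ by its Thoma character, realize each Thoma character by a groupoid construction over a Bernoulli action, conclude by quasi-equivalence) is the right one and is how the result is obtained in \cite{VeK81}. But two central steps of your execution fail, and they are exactly the two points you flag as ``delicate.'' First, the stabilizer claim is false: for the coordinate-permutation action on $[0,1]^{\Bbb N}$, if $x_m=x_n$ then the transposition $(m\,n)$ fixes $x$, so the stabilizer of $\nu_\alpha$-almost every $x$ is the full \emph{unsigned} Young subgroup $\prod_B S^+(B)$ of the coincidence partition --- it can never be an alternating group on a block. The signed structure is simply invisible to $\operatorname{Stab}_x$, and moreover the map $x\mapsto\operatorname{Stab}_x$ is far from injective (all $x$ with the same coincidence pattern, e.g.\ differing only in the values of the ``continuous'' coordinates, share a stabilizer), so the action on $[0,1]^{\Bbb N}$ is not TNF and Theorem~\ref{th:17} does not apply to it. The signed Young subgroups occur as the \emph{support of the AD-measure on the lattice} $L(S_{\Bbb N})$ under the adjoint action, which is a different dynamical system from the one you built.

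Second, and fatally for the character identity: the quantity $\chi'(g)=\nu_\alpha\{x: gx=x\}$ is the measure of a set, hence a product over cycles of $\sum_{i\neq 0}\alpha_i^{k}$ with \emph{all plus signs} (the probability that $k$ i.i.d.\ coordinates coincide is the sum of the $k$-th powers of the atom masses, regardless of how the atoms are colored). No Fubini argument can extract $(-1)^{k-1}\sum_{i<0}\alpha_i^k$ from it; indeed Thoma's character with $\beta\neq0$ takes negative values on transpositions, so it cannot equal the measure of any set. Your construction therefore only realizes the characters with all negative parameters zero. The missing ingredient is the $\{\pm1\}$-valued cocycle on the orbit equivalence relation (the sign of the permutation that $g$ induces on the coordinates of $x$ falling into the negative atoms): the groupoid representation must be twisted by this cocycle, and it is the trace of the \emph{twisted} representation --- not the fixed-point measure --- that produces the factor $(-1)^{k-1}$ on the negative parameters. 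Equivalently, one passes to the adjoint action on $L(S_{\Bbb N})$ with the measure $\nu_\alpha$ concentrated on signed Young subgroups and applies the TNF machinery there. Without one of these devices the proof establishes the theorem only for the positive part of the Thoma simplex.
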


Now we may ask about the class of groups (which, in general, are not of type~I) for which the set of
representations of type ${\rm II}_1$ has a parameterization by a precompact space. In other words, when the space of indecomposable finite traces (or characters if we consider representations of groups)
is totally bounded? Of course, this question is natural if the group has sufficiently many traces, i.e., every pair of elements of the algebra that are not conjugate can be distinguished by some indecomposable trace. The infinite symmetric group is one of such groups. The question can be included into our general problem about central measures on  graded graphs.
The conclusion is that central measures in the case of the symmetric group give a very interesting series of  irreducible representations of the double group.

\newpage

\end{document}